\def\pn{[a_1,b_1]\cdots[a_{g-1},b_{g-1}]}
\def\qn{[b_{g-1},a_{g-1}]\cdots[b_1,a_1]}
\def\TC{\protect\operatorname{TC}}
\def\cat{\protect\operatorname{cat}}
\def\g{\protect\operatorname{\text{secat}}}
\newcommand{\ol}{\overline}
\newcommand{\z}{\mathbb{Z}}
\newcommand{\id}{\mathrm{id}}
\newcommand{\ot}{\otimes}
\DeclareMathOperator{\Hom}{Hom}
\newtheorem{proposition}{Proposition}[section]
\newtheorem{corollary}[proposition]{Corollary}
\newtheorem{definition}[proposition]{Definition}
\newtheorem{theorem}[proposition]{Theorem}
\newtheorem{remark}[proposition]{Remark}
\newtheorem{example}[proposition]{Example}
\newtheorem{lemma}[proposition]{Lemma}
\begin{document}

\title{Effective topological complexity of orientable-surface groups}
\author{Natalia Cadavid-Aguilar\footnote{The first author is grateful for support from FORDECYT grant 265667 ``Programa para un avance global e integral de la matem\'atica mexicana''.} \ and Jes\'us Gonz\'alez}

\date{\empty}

\maketitle

\begin{abstract}
We use rewriting systems to spell out cup-products in the (twisted) cohomology groups of a product of surface groups. This allows us to detect a non-trivial obstruction bounding from below the effective topological complexity of an orientable surface with respect to its antipodal involution. Our estimates are at most one unit from being optimal, and are closely related to the (regular) topological complexity of non-orientable surfaces.
\end{abstract}

{\small 2010 Mathematics Subject Classification: 20F10, 20J06, 55M30. 55N25, 68T40, 68Q42.}

{\small Keywords and phrases: Rewriting system, surface group, contracting homotopy, diagonal approximation, group cohomology, effective topological complexity.}

\section{Introduction and main result}
A deep connection between the LS-category ($\cat$) of an aspherical space and the projective dimension of its fundamental group was established by Eilenberg and Ganea in~\cite{MR0085510}. In contrast, Farber's topological complexity TC (introduced in~\cite{Far,MR2074919}), a close relative of $\cat$ motivated by robotics, is currently missing a suitable analogue of the Eilenberg-Ganea $\cat$-result.

Historically, closed surfaces play a central TC-role. While the TC computation for the genus-$g$ orientable surface $\Sigma_g$ is an easy task, the genus-$g$ non-orientable surface $N_g$ sets a formidable challenge. This was started by Dranishnikov (\cite{MR3544546,MR3720882}), and completed by Cohen and Vandembroucq (\cite{MR3975552}) a few years ago in a {\it tour de force}. It is not difficult to see that $\TC(N_g)\in\{3,4\}$, and that the actual answer depends on whether a Berstein $\TC$-obstruction vanishes. This translates the topological problem into a purely algebraic one, for the obstruction is a theoretically well-understood element (a fourth cup-power) in a twisted cohomology group of $N_g\times N_g$, which can then be assessed in terms of group cohomology. The catch is that the relevant system of coefficients ---the fourth tensor power of the augmentation ideal of the integral group-ring of the fundamental group of $N_g$--- is (infinite and) highly complicated, so that a direct assessment of the obstruction becomes intractable. With great dexterity, Cohen and Vandembroucq manage to find a far smaller (actually finite rank) system of coefficients where the obstruction maps non-trivially, thus implying that $\TC(N_g)=4$ is the answer for $g\geq2$.

The above discussion suggests the possibility of finding a small (easily handable) system of coefficients on $N_g\times N_g$ that supports a nontrivial TC-obstruction implying $\TC(N_g)=4$. In fact, this paper aims at paving the algebraic grounds toward such a potential goal, in the stronger form presented below. 

Effective topological complexity, a variant of TC, was introduced in~\cite{MR3738182}. In general terms, when a group $G$ acts on a space $X$, B{\l}aszczyk-Kaluba's effective TC, $\TC^G(X)$, measures the topological instabilities of motion planning in $X$ when $G$-symmetries are taken into account. We are interested in $\TC^{\z_2}(\Sigma_g)$ due to its relation to $\TC(N_{g+1})$. Here $\z_2$ acts antipodally on $\Sigma_g$, so that $N_{g+1}$ is the corresponding orbit space. Namely,~\cite[Section~3, paragraph following Definition 3.1]{MR3738182} and~\cite[Theorem~3.10]{MR3312969} yield
\begin{equation}\label{lacompa}
\TC^{\z_2}(\Sigma_g)\leq\TC(N_{g+1})=4.
\end{equation}

The group-cohomology calculations in this paper allow us to detect a nontrivial obstruction in the cohomology of $\Sigma_g\times \Sigma_g$ (instead of $N_{g+1}\times N_{g+1}$), which refines~(\ref{lacompa}) to:

\begin{theorem}\label{etcso}
$3\leq\TC^{\z_2}(\Sigma_{g})\leq\TC(N_{g+1})=4$, for $g\geq 2$.
\end{theorem}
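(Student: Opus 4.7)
The upper bound $\TC^{\z_2}(\Sigma_g)\leq\TC(N_{g+1})$ is exactly inequality~(\ref{lacompa}), and the equality $\TC(N_{g+1})=4$ is the Cohen--Vandembroucq theorem. So all the new content lies in the lower bound $\TC^{\z_2}(\Sigma_g)\geq 3$. The plan is to establish this bound by a cohomological zero-divisor obstruction adapted to the effective setting: interpreting B{\l}aszczyk--Kaluba's $\TC^{\z_2}$ as the sectional category of a suitable evaluation fibration over $\Sigma_g\times\Sigma_g$, one obtains a lower bound of the form ``$\g$ exceeds any $n$ for which there is a nonzero $n$-fold cup product of classes vanishing along the appropriate diagonal section.'' Since $\Sigma_g$ is aspherical and $\pi_1(\Sigma_g\times\Sigma_g)=\pi\times\pi$ with $\pi=\pi_1(\Sigma_g)$, these obstructions live in a twisted group cohomology $H^{*}(\pi\times\pi;L)$ for a local coefficient system $L$ determined by the $\z_2$-action. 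So it suffices to exhibit a single nontrivial $2$-fold cup product of ``effective zero-divisors'' in such an $H^{*}(\pi\times\pi;L)$.

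To compute such a cup product explicitly, I would follow the route announced in the abstract. First, I would fix a complete rewriting system for the standard surface presentation of $\pi=\pi_1(\Sigma_g)$ (for instance the Hermiller--Meier or LeChenadec type system), and use it to produce a small (indeed, the geometric-dimension-minimal) free $\z[\pi]$-resolution $F_\bullet\to\z$ together with an explicit contracting homotopy in the bar complex. Tensoring two copies gives a free resolution $F_\bullet\otimes F_\bullet\to\z$ over $\z[\pi\times\pi]$. Second, from the contracting homotopy I would extract, by the standard acyclic-models argument, an explicit diagonal approximation $\Delta\colon F_\bullet\to F_\bullet\otimes F_\bullet$; dualising and pairing gives a cochain-level formula for cup products in $H^{*}(\pi\times\pi;L)$ for arbitrary $\z[\pi\times\pi]$-modules~$L$.

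With this machinery in place, the remaining step is to choose $L$ and two cohomology classes $u,v\in H^{1}(\pi\times\pi;L_1)$, $H^{1}(\pi\times\pi;L_2)$ (with $L=L_1\otimes L_2$) that lie in the kernel of the relevant ``effective diagonal'' restriction and whose cup product $u\smile v\in H^{2}(\pi\times\pi;L)$ is nonzero. The natural candidates arise by pulling back, along the quotient $\Sigma_g\to N_{g+1}$, the twisted classes already known from the Cohen--Vandembroucq analysis of $\TC(N_{g+1})$, truncated to live over $\pi\times\pi$ rather than over $\pi_1(N_{g+1})\times\pi_1(N_{g+1})$, and then coupled with the $\z_2$-action so as to fit the effective-TC obstruction scheme.

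The main obstacle, and the reason the calculation occupies the bulk of the paper, is this last step: one must pick a coefficient system $L$ that is simultaneously small enough for the cup product coming out of the diagonal formula to be computed by hand (or at least reduced to a finite linear-algebra problem), and rich enough to detect the obstruction nontrivially. Once this balance is struck, the nonvanishing of $u\smile v$ furnishes the required effective zero-divisor cup-length bound and forces $\TC^{\z_2}(\Sigma_g)\geq 3$, completing the theorem.
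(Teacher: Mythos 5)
Your overall strategy matches the paper's: you reduce the lower bound to a Schwarz-type cup-length estimate for the fibration $p_2$ of Definition~\ref{def_Teff} (Proposition~\ref{auxi}), and you plan to evaluate the relevant cup products via a rewriting-system resolution, an explicit contracting homotopy, and a diagonal approximation, exactly as in Sections~3--6 of the paper. But there is a genuine numerical gap, and a couple of further discrepancies worth flagging.

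The gap is in your cup-length counting. You propose exhibiting ``a single nontrivial $2$-fold cup product of effective zero-divisors,'' with two degree-one classes $u,v$. In the paper's (reduced) convention, Proposition~\ref{auxi} yields $\g(p_2)\geq\ell$ from a nonzero $\ell$-fold product of classes killed by $p_2^*$; a $2$-fold product therefore gives only $\TC^{\z_2}(\Sigma_g)\geq 2$, not the claimed $\geq 3$. The paper's Proposition~\ref{ixua} instead exhibits \emph{three} effective zero-divisors, $a,b\in H^1(\uppi_g\times\uppi_g;\mathbb{Z}_\varnothing)$ together with a crucial \emph{degree-two} class $c\in H^2(\uppi_g\times\uppi_g;\mathbb{Z}_\varnothing)$, and checks $abc=2(\upomega\otimes\upomega)^*\neq 0$; this triple product is what produces $\g(p_2)\geq 3$. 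The degree-two class is not a luxury: Remark~\ref{rf} records that already for $g=2$, any two degree-one effective zero-divisors with $\widetilde{\mathbb{Z}}$-coefficients have trivial product, so there is no easy route through classes that are all of degree one.

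Two further points. First, the coefficient system used for the theorem is simple $\mathbb{Z}$-coefficients ($\mathbb{Z}_\varnothing$), not twisted coefficients pulled back from the Cohen--Vandembroucq analysis of $N_{g+1}$ as you suggest; the twisted calculations of Sections~5--6 are developed with an eye toward the sharper conjectural bound $\TC^{\z_2}(\Sigma_g)=4$ discussed in Remark~\ref{rf}, and are not needed here. Second, you gloss over the principal technical obstacle in making ``effective zero-divisor'' computable in group cohomology: the twisted diagonal $\Delta'=(1,\sigma)$ is not a based map. The paper replaces it by a based homotopic map $\Delta''=(1,\mu)$ through an explicit homotopy $H$, and must then determine $\mu_*\colon\uppi_g\to\uppi_g$ geometrically (Lemma~\ref{delosdia}). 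Without this step you cannot actually decide which classes lie in $\ker(\Delta)^*\cap\ker(\Delta')^*$, so the plan as written cannot be carried out.
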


The low-genus case $N_1$ (i.e., the real projective plane) is special, for $\TC(N_1)=3$, as proved in~\cite{MR1988783}. In the effective-TC realm, the first two low-genus cases $\Sigma_0$ and $\Sigma_1$ are exceptional too: $\TC^{\z_2}(\Sigma_g)=g+1$ for $g=0,1$ (\cite{BGK,MR3738182}). Note that $\TC(N_{g+1})-\TC^{\z_2}(\Sigma_g)=2$ for $g=0,1$, while Theorem~\ref{etcso} yields $0\leq\TC(N_{g+1})-\TC^{\z_2}(\Sigma_g)\leq1$ for $g\geq2$.

Our proof of Theorem~\ref{etcso} uses this paper's cohomological calculations with simple $\mathbb{Z}$-coefficients. Remark~\ref{rf} at the end of the paper suggests that the full power of our twisted-coefficients cohomology results could lead to a proof of the equality $\TC^{\z_2}(\Sigma_g)=4$ for ``large'' genera. In view of~(\ref{lacompa}), this would give a simpler proof of Dranishnikov's and Cohen-Vandembroucq's corresponding assertion for $\TC(N_g)$.

As illustrated in~\cite{MR3386228}, our cup-product computations would also be relevant in other obstruction-theory flavored problems, such as in the study of non-principal torus bundles over products of surfaces.

\section{Preliminaries}
\subsection{The rewriting system}
In what follows $g$ stands for an integer greater than 1. Let $\Sigma_g$ (respectively, $\uppi_g$) stand for the closed orientable surface of genus $g$ (respectively, for the fundamental group of $\Sigma_g$). A standard presentation for $\uppi_g$ is
\begin{equation}\label{presentacion}
    \uppi_g = \langle\; a_1, b_1, \ldots, a_g, b_g\;\colon\,R_g\;\rangle,
\end{equation}
where $R_g=[a_1,b_1] \cdots [a_g,b_g]$, a product of commutators. We denote the neutral element in $\uppi_g$ by $1$, and the group inverse of an element $x\in\uppi_g$ by $\overline{x}$. In particular, any element in $\uppi_g$ can be expressed as a (possibly empty) word in the alphabet
\begin{equation}\label{alfabeto}
a_i,\overline{a_i},b_i,\overline{b_i},\;\; (1\le i\le g).
\end{equation}

There is a number of ways for producing, in an algorithmic fashion, normal forms for elements in $\uppi_g$. We use the method developed by Hermiller in terms of rewriting systems. The reader is referred to~\cite[Section~2]{MR1261121} and~\cite[Section~1]{MR1146597} for basic definitions about rewriting systems, and to~\cite[Section~3]{MR1261121} for a proof of the fact that rules~(\ref{rrsimples})--(\ref{R8}) below determine a finite complete rewriting system for $\uppi_g$.
\begin{align}
& a_i\overline{a_i}\to1, \;\;\overline{a_i}a_i\to1, \;\;b_i\overline{b_i}\to1, \;\;\overline{b_i}b_i\to1 \;\;\;(1\le i\le g); \label{rrsimples} \\ & a_gb_g\to\qn b_ga_g; \label{R5} \\
 & \overline{a_g}\overline{b_g}\to \overline{b_g}\overline{a_g}\qn; \label{R6} \\
 & a_g\overline{b_g}\to \overline{b_g}\pn a_g; \label{R7} \\
 & \overline{a_g}\qn b_g\to b_g\overline{a_g}. \label{R8}
\end{align}

Note that any word in the alphabet~(\ref{alfabeto}) containing no subword $\ell\overline{\ell}$ ($\ell$ in~(\ref{alfabeto})) and no letter $b_g$ nor $\overline{b_g}$ is necessarily in normal form. More generally, a word $\omega$ in the alphabet~(\ref{alfabeto}) is in normal form provided none of the words on the left of the rules~(\ref{rrsimples}) and~(\ref{R5})--(\ref{R8}) appears as a subword of~$\omega$. In such a case, any subword of~$\omega$ will automatically be in normal form.

\smallskip
We use the notation $N(x)$ to stand for the normal form of an element $x\in\uppi_g$. Thus $N(x)$ really stands for a word, rather than an actual element of $\uppi_g$. Yet, such a careful distinction will be overruled latter in the paper by using the same notation for either a word on the alphabet~(\ref{alfabeto}), or the element it represents in $\uppi_g$. The context will clearify the intended meaning.

\smallskip
For elements $x,y\in\uppi_g$, we will say that {\it $x$ ends like $y$} if, in terms of the concatenation product of words, $N(x)=N(z)N(y)$ for some $z\in\uppi_g$.

\subsection{Resolution for one factor}
Fox derivatives can be used to construct a minimal resolution for a group presented with a single relation. In the case of $\uppi_g$, the explicit resolution is spelled out next.

\begin{proposition}[{\cite[Proposition~2.1]{MR3386228}}]\label{spelledout}
A free resolution $M_*^g$ of the trivial $\uppi_g$-module $\z$ is
$$ 
    \begin{tikzpicture}[descr/.style={fill=white,inner sep=1.5pt}]
        \matrix (m) [
            matrix of math nodes,
            row sep=1em,
            column sep=2.5em,
            text height=1.5ex, text depth=0.25ex
        ]
        {0  &   M_2^g   &   M_1^g   &   M_0^g   &   \z  &   0,\\};
        \path[overlay,->, font=\scriptsize,>=latex]
        (m-1-1) edge (m-1-2)
        (m-1-2) edge[above] node {$d_2$}(m-1-3)
        (m-1-3) edge[above] node {$d_1$} (m-1-4)
        (m-1-4) edge[above] node {$\epsilon$} (m-1-5)
        (m-1-5) edge (m-1-6);
\end{tikzpicture}
$$ 
where each $\uppi_g$-module $M_i^g$ is free with basis indicated in the following table:

\medskip
\centerline{\begin{tabular}{|c|c|c|c|}\hline
\rule{0mm}{4mm}Module & $M_0^g$ & $M_1^g$ & $M_2^g$ \\[.2ex]
\hline
\rule{0mm}{4mm}basis & $\upchi$ & $\upalpha_i,\upbeta_i \;(1\le i\le g)$ & $\upomega$ \\ [.2ex]
\hline
\end{tabular}}

\bigskip\noindent Morphisms in $M_*^g$ are determined by $\,\epsilon(\upchi) = 1$, $d_1(\upalpha_{i} ) = (a_i-1)\upchi$, $ d_1(\upbeta_{i} ) = (b_i-1)\upchi$ and
\begin{equation}\label{ladedos}
d_2(\upomega)=\sum_{i=1}^g\left(\frac{\partial R_g}{\partial a_i} \upalpha_i+ \frac{\partial R_g}{\partial b_i} \upbeta_i\right),
\end{equation}
where partial derivative symbols stand for Fox derivatives in free differential calculus, and $R_g$ is the defining word relation in~(\ref{presentacion}).
\end{proposition}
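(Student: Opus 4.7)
The plan is to realize $M_*^g$ as the augmented cellular chain complex of the universal cover of $\Sigma_g$, equipped with the natural CW decomposition coming from the presentation~(\ref{presentacion}). I would endow $\Sigma_g$ with the CW structure having one $0$-cell, $2g$ oriented $1$-cells labeled by the generators $a_i,b_i$, and a single $2$-cell attached along the relator word $R_g$; lifting a representative of each cell to the universal cover $\widetilde{\Sigma}_g$ and letting $\uppi_g$ act by deck transformations yields a free $\uppi_g$-CW structure whose cellular chain groups are $\uppi_g$-free with bases $\ch$, $\{\al{i},\be{i}\}_{i=1}^g$, and $\om$ in degrees $0$, $1$, $2$ respectively.

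Next I would verify the differentials. The augmentation $\epsilon(\ch)=1$ is forced. For $d_1$, choosing the lift of the $a_i$-edge so that it runs from the basepoint lift to its $a_i$-translate gives $d_1(\al{i})=(a_i-1)\ch$, and similarly $d_1(\be{i})=(b_i-1)\ch$. For $d_2$, the boundary of the chosen lift of the $2$-cell is obtained by traversing the attaching word $R_g$ and recording, for each letter, the group element labeling the initial vertex of the corresponding lifted $1$-cell; this procedure is precisely the unfolding of Fox's free differential calculus, and reproduces formula~(\ref{ladedos}).

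The main obstacle is exactness. Acyclicity of the augmented complex follows from asphericity: since $\Sigma_g$ is a closed orientable surface of genus $g\geq 2$, it is a $K(\uppi_g,1)$, so its universal cover is contractible and the augmented cellular chain complex is acyclic. Exactness at $M_0^g$ reduces to the standard fact that the augmentation ideal of $\mathbb{Z}[\uppi_g]$ is generated by $\{a_i-1,b_i-1\}_{i=1}^g$. Injectivity of $d_2$ is the only delicate point; topologically it is immediate from $\widetilde{\Sigma}_g\cong\mathbb{R}^2$, while an algebraic argument is available through Lyndon's identity theorem for one-relator groups, which applies because $R_g$ is not a proper power in the free group on $a_1,b_1,\dots,a_g,b_g$.
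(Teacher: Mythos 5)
Your argument is correct. The paper itself does not prove this proposition but cites it from Gon\c{c}alves--Martins, whose framing (and the preamble here) is algebraic, via Fox derivatives for the one-relator group $\uppi_g$; your route is the topological version of the same construction. You realize $M_*^g$ as the augmented cellular chain complex of the universal cover of $\Sigma_g$ with the one-vertex CW structure coming from~(\ref{presentacion}), get exactness from contractibility of $\widetilde{\Sigma}_g$, and read off $d_2$ from the boundary of the lifted $2$-cell --- which, as you note, is literally the total Fox derivative of $R_g$. Both approaches deliver the same content: the topological one makes acyclicity immediate once asphericity of $\Sigma_g$ is granted, while the purely algebraic one (via Lyndon's identity theorem, which you offer as a backup) avoids any appeal to surface topology but requires knowing that $R_g$ is not a proper power in the free group on $a_1,b_1,\dots,a_g,b_g$, a fact that itself needs justification. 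One small redundancy worth noting: once you have invoked contractibility of $\widetilde{\Sigma}_g$, exactness holds in every degree at once, so your separate remarks about the augmentation ideal (exactness at $M_0^g$) and about injectivity of $d_2$ are alternative self-contained arguments rather than needed supplements to the asphericity argument.
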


Recall that the Fox derivative with respect to $u\in\{\upalpha_i,\upbeta_i\}$ of a word $v_1\cdots v_n$ in the alphabet~(\ref{alfabeto}) is defined recursively through the formula
$$\frac{\partial (v_1\cdots v_n)}{\partial u}=\frac{\partial v_1}{\partial u}+v_1\frac{\partial (v_2\cdots v_n)}{\partial u},$$
where the rule $\frac{\partial v}{\partial u}=0$ if $u\ne v$, as well as the rules $\frac{\partial \overline{v}}{\partial u}=-\overline{v}\frac{\partial v}{\partial u}$ and $\frac{\partial u}{\partial u}=1$  are in effect for $u,v\in\{\upalpha_i,\upbeta_i\}$.

Set $P_\ell:=c_1\cdots c_\ell$, with $c_\ell$ standing for the commutator $[a_\ell,b_\ell]$ for $1\le\ell\le g$.
It is an easy exercise to check that the ``total Fox derivative'' formula~(\ref{ladedos}) takes the explicit form
\begin{equation}\label{d2}
   d_2(\upomega)= 
    \mathlarger{\mathlarger{\sum}}_{i=1}^{g-1}\left[ \rule{0mm}{5mm}P_{i-1}\left(1-a_ib_i\ol{a_i}\right)\upalpha_{i}+P_{i-1}\left(a_i - c_{i}\right)\upbeta_{i} \right] + \left(P_{g-1}-b_g\right) \upalpha_{g}  + \left(P_{g-1}a_g-1\right) \upbeta_{g},
\end{equation}
where all words appearing in the last expression (after the obvious distribution of products in sums) are in normal form.

\section{Contracting homotopy via Hermiller's normal forms}
The goal of this section is to construct an explicit contracting homotopy $s_*=s_*^g$

\smallskip
\begin{equation}\label{hglksaytpowq}
    \begin{tikzpicture}[descr/.style={fill=white,inner sep=1.5pt}]
        \matrix (m) [
            matrix of math nodes,
            row sep=1em,
            column sep=2.5em,
            text height=1.5ex, text depth=0.25ex
        ]
        {0  &   M_2^g   &   M_1^g   &   M_0^g   &   \z  &   0,\\};
        \path[overlay,->, font=\scriptsize,>=latex]
        (m-1-1) edge (m-1-2)
        (m-1-2) edge[below] node {$d_2$}(m-1-3)
        (m-1-3) edge[below] node {$d_1$} (m-1-4)
        (m-1-4) edge[below] node {$\epsilon$} (m-1-5)
        (m-1-5) edge (m-1-6);
        \draw[overlay,->, font=\scriptsize,>=latex,bend right]
        (m-1-5) to node [above]{$s_{-1}$} (m-1-4);
        \draw[overlay,->, font=\scriptsize,>=latex,bend right]
        (m-1-4) to node [above]{$s_0$} (m-1-3);
        \draw[overlay,->, font=\scriptsize,>=latex,bend right]
        (m-1-3) to node [above]{$s_1$} (m-1-2);
\end{tikzpicture}
\end{equation}
i.e., morphisms $s_i\,(i=-1,0,1)$ of abelian groups satisfying the usual relations
\begin{equation}\label{contr4}
\epsilon s_{-1} = \id_{\z},\quad
d_1s_0 + s_{-1}\epsilon = \id_{M_0^g}, \quad
d_2s_1 + s_0d_1 = \id_{M_1^g} \quad\text{and\quad}
s_1d_2 = \id_{M_2^g}.
\end{equation}

\begin{remark}\label{aruonvcrujg}
The $s_{-1}$ and $s_0$ components of a contracting homotopy $s_*$ as above are fully described in~\cite{MR3386228}, but $s_1$ is described just enough to get a hold on the essential pieces of a diagonal approximation $M_*^g\to M_*^g\otimes M_*^g\hspace{.3mm}$ for $M_*^g$. Since we aim at producing diagonal approximations for each $M^{g_1,\ldots,g_n}_*=M^{g_1}_*\otimes\cdots\otimes M^{g_n}_*$, we are forced to describe $s_1$ in full.
\end{remark}

The morphism $s_{-1}:\z\to M_0^g$ is defined by $s_{-1}(1)=\upchi$, and the first condition in~(\ref{contr4}) is clearly satisfied. Slightly more elaborate is to define the morphism $s_0\colon M_0^g\to M_1^g$. We set
\begin{equation}\label{deritotalFox}
s_0(\upchi)= 0,  \;\;\;  s_0(a_i \upchi) = \upalpha_i,  \;\;\;  s_0(b_i \upchi) = \upbeta_i,  \;\;\;  s_0(\ol{a_i} \upchi) = -\ol{a_i} \upalpha_i,  \;\;\;  s_0(\ol{b_i} \upchi) = - \ol{b_i} \upbeta_i, 
\end{equation}
where $i\in\{1,\ldots,g\}$, and for $y\in\uppi_g$ with $N(y)=\ell_1 \cdots \ell_k$, we set
\begin{equation}\label{dtf}
s_0(y\upchi)=s_0(\ell_1 \ell_2 \cdots \ell_k\upchi) = s_0(\ell_1\upchi) + \ell_1 s_0(\ell_2\upchi) + \cdots + \ell_1 \cdots \ell_{k-1} s_0(\ell_k\upchi).    
\end{equation}
Definition in~(\ref{deritotalFox}) and~(\ref{dtf}) of the map $s_0$ as a ``total Fox derivative'' (in terms of normal forms) agrees with definitions in other sources (e.g.~\cite{MR1092227,MR920522}). 

\begin{remark}\label{formulaubarvsd2}
From~(\ref{dtf}) we see that, if a (concatenation) product of words $\omega_1\omega_2\cdots\omega_k$ is in normal form, then
\begin{equation}\label{dtfgeneral}
s_0(\omega_1\omega_2\cdots\omega_k\upchi)=s_0(\omega_1\upchi)+\omega_1s_0(\omega_2\upchi)+\cdots+\omega_1\omega_2\cdots\omega_{k-1}s_0(\omega_k\upchi).
\end{equation}
In fact, since~(\ref{deritotalFox}) implies the relation $s_0(\ell)+\ell s_0(\ol{\ell})=0$ for $\ell\in\{a_i,b_i\colon 1\le i\le g\}$, we have that~(\ref{dtfgeneral}) holds for any word $\omega_1\cdots\omega_k$, even if it is not in normal form, but as long as it can be put in normal form by applying type-(\ref{rrsimples}) rewriting rules.
\end{remark}

\begin{proposition}
The second condition in~(\ref{contr4}) holds true.
\end{proposition}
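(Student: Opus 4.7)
The plan is to verify the identity on the $\z$-basis $\{y\upchi \colon y\in\uppi_g\}$ of $M_0^g$. Since $\epsilon$ sends every such basis element to $1\in\z$, we have $s_{-1}\epsilon(y\upchi)=\upchi$, so the statement reduces to proving $d_1 s_0(y\upchi) = (y-1)\upchi$ for every $y\in\uppi_g$.

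First I would dispatch the single-letter case, i.e.\ $y\in\{a_i,\overline{a_i},b_i,\overline{b_i}\colon 1\le i\le g\}$, by direct substitution of (\ref{deritotalFox}). For $y=a_i$, $d_1 s_0(a_i\upchi)=d_1(\upalpha_i)=(a_i-1)\upchi$ on the nose. For $y=\overline{a_i}$,
$$d_1 s_0(\overline{a_i}\upchi)=d_1(-\overline{a_i}\upalpha_i)=-\overline{a_i}(a_i-1)\upchi=(\overline{a_i}-1)\upchi,$$
and the two $b$-cases are analogous. Then, for a general $y\in\uppi_g$ with normal form $N(y)=\ell_1\cdots\ell_k$, I apply the $\z$-linear map $d_1$ to (\ref{dtf}) and use the single-letter computation at each summand to get
$$d_1 s_0(y\upchi)=\sum_{j=1}^{k}\ell_1\cdots\ell_{j-1}\,d_1s_0(\ell_j\upchi)=\sum_{j=1}^{k}\ell_1\cdots\ell_{j-1}(\ell_j-1)\upchi.$$
This sum telescopes to $(\ell_1\cdots\ell_k-1)\upchi=(y-1)\upchi$, which finishes the verification.

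There is no genuine obstacle here: the whole argument is a telescoping calculation, and the only point requiring a moment's attention is that the recursive definition (\ref{dtf}) was precisely designed so that applying $d_1 s_0$ to a word in normal form behaves like the Fox ``total derivative,'' converting concatenation into the desired difference $y\mapsto(y-1)\upchi$. In fact, as Remark~\ref{formulaubarvsd2} observes, the same telescoping identity remains valid on any word that can be reduced via the trivial cancellation rules~(\ref{rrsimples}), so the argument is robust under the (finitely many) rewriting steps needed to put an arbitrary word into normal form.
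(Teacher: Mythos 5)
Your proof is correct and follows essentially the same route as the paper's: verify the identity on single-letter basis elements directly from~(\ref{deritotalFox}), then apply $d_1$ termwise to the ``total Fox derivative'' expansion~(\ref{dtf}) and observe that the resulting sum telescopes to $(y-1)\upchi$.
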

\begin{proof}
The second condition in~(\ref{contr4}) holds at $y\upchi$ whenever $y\in\{1, a_i, \ol{a_i}, b_i, \ol{b_i}\,|\,i=1,\ldots, g\}$, by definition. For $N(y)=\ell_1\cdots\ell_k$ with $k\geq2$, we have 
\begin{align*}
d_1s_0\left(\ell_1\cdots\ell_k\upchi\right)
&=d_1\left(s_0(\ell_1\upchi) + \ell_1 s_0(\ell_2\upchi) + \cdots + \ell_1 \cdots \ell_{k-1} s_0(\ell_k\upchi)\rule{0mm}{4mm}\right) \\
&=d_1s_0(\ell_1\upchi) + \ell_1 d_1s_0(\ell_2\upchi) + \cdots + \ell_1 \cdots \ell_{k-1} d_1s_0(\ell_k\upchi)\\
&=\left(\ell_1 - 1\right)\upchi+ \ell_1\left(\ell_2 - 1\right)\upchi+\cdots+
\ell_1 \cdots \ell_{k-1} \left(\ell_k - 1\right)\upchi\\
&=\ell_1 \cdots \ell_{k}\upchi-\upchi\,=\,y\upchi-s_{-1}\epsilon\left(y\upchi\right),
\end{align*}
which completes the proof.
\end{proof}

Before defining the morphism $s_1\colon M_1^g\to M_2^g$, we introduce a few auxiliary elements, and record a number of helpful relations between them. Set $U:=\ol{a_g}\ol{P_{g-1}}$ and, for $n\geq0$, $T_n:=a_gU^{-n}=a_g\left(P_{g-1}a_g\right)^n=\left(a_gP_{g-1}\right)^na_g$. Note that the right-most expressions in the definitions of $U$ and $T_n$ are words in normal form. Straightforward calculation using~(\ref{dtfgeneral}) (and~(\ref{d2}), in the case of~(\ref{hhh})) gives the relations
\begin{align}
s_0(\ol{a_g}^{m}\upchi)&=-(\ol{a_g}+\ol{a_g}^2+\cdots+\ol{a_g}^{m})\upalpha_g, \nonumber \\
s_0(b_g\ol{a_g}^m\upchi)&=\upbeta_g-b_g(\ol{a_g}+\ol{a_g}^2+\cdots+\ol{a_g}^m)\upalpha_g, \label{tqoiusazx} \\
s_0(U^{m+1}\upchi)&=(1+U+\cdots+U^m)s_0(U\upchi), \label{iuytghjkmnb}\\
s_0(\ol{U}\upchi)&=d_2(\upomega)+b_g\upalpha_g+(1-\ol{U})\upbeta_g \label{hhh} \\
&= \mathlarger{\mathlarger{\sum}}_{i=1}^{g-1}\left[ P_{i-1}\hspace{-1mm}\left(\rule{0mm}{4mm}\hspace{-1mm}\left(1-a_ib_i\ol{a_i}\right)\upalpha_{i} + \left(a_i - c_{i}\right)\upbeta_{i}\right)\right] + \ol{U}\ol{a_g}\upalpha_g,\nonumber
\end{align}
where the term $s_0(U\upchi)$ in~(\ref{iuytghjkmnb}) is given by
\begin{equation}
a_gs_0(U\upchi) = \mathlarger{\mathlarger{\sum}}_{i=1}^{g-1}\left[ \left(\prod_{k=i+1}^{g-1}c_{k}\right)^{\!\!\!-1}\!\!\left(\rule{0mm}{5mm}\!\!\left(b_i-\ol{c_{i}}\right)\upalpha_{i} + \left(1 - b_ia_i\ol{b_i}\right)\upbeta_{i}\right)\right] -\upalpha_g. \nonumber
\end{equation}
The apparent relationship between the last two expressions is formalized by the second observation in Remark~\ref{formulaubarvsd2}, which yields $0=s_0(\ol{U}U\upchi)=s_0(\ol{U}\upchi)+\ol{U}s_0(U\upchi)$, so
\begin{equation}\label{nataobs}
s_0(\ol{U}\upchi)=-\ol{U}s_0(U\upchi).
\end{equation}
Likewise,
\begin{equation}\label{laosjjsssjjss}
s_0(a_gU\upchi) = \upalpha_g + a_gs_0(U\upchi). 
\end{equation}
Further, the rewriting rules (\ref{R5})--(\ref{R8}) yield respectively 
\begin{equation}\label{aux1}
a_gb_g=a_gUb_ga_g,\;\; N(a_g\ol{b_g})=\ol{b_g}\,\ol{U}, \;\; N(\ol{a_g}\ol{b_g})=\ol{b_g}U, \;\;  N(Ub_g)=b_g\ol{a_g}.
\end{equation}
The word on the right-hand side of the first equality in~(\ref{aux1}) is one rewriting rule away from being in normal form. Yet, the first equality in~(\ref{aux1}) can be written as $N(\ol{U}b_g)=b_ga_g$, which has the flavor of the other three equations in~(\ref{aux1}). By iteration we get, for $m\ge0$,
\begin{equation}\label{lemma32}
N(\ol{U}^mb_g)=b_ga_g^m,\;\; N(a_g^m\ol{b_g})=\ol{b_g}\,\ol{U}^m, \;\; N(\ol{a_g}^m\ol{b_g})=\ol{b_g}U^m, \;\;  N(U^mb_g)=b_g\ol{a_g}^m.
\end{equation}

The morphism $s_1\colon M_1^g\to M_2^g$ is defined on a $\mathbb{Z}$-basis element $y\uplambda$ (here $y\in\uppi_g$ and $\uplambda \in \{\upalpha_{1} , \upbeta_{1} , \ldots, \upalpha_{g} , \upbeta_{g} \}$) by setting $s_1(y \uplambda)=0$, if $\uplambda\neq \upbeta_g$, and
$$
s_1(y \upbeta_{g} ) = \begin{cases}
\left(y \displaystyle\sum_{i=1}^{n+1}U^i\right)\upomega, 
& \mbox{if $y$ ends like $T_n$ but not like $T_{n+1}$, $n \geq 0$; } \\
-\left(y \displaystyle\sum_{i=0}^{n-1}\ol{U^i}\right)\upomega,
& \mbox{if $y$ ends like $U^{n}$ but not like $U^{n+1}$, $n \geq 1$; } \\
0,& \mbox{otherwise.}
\end{cases}
$$

\begin{proposition}\label{nodo1}
The third condition in~(\ref{contr4}) holds true at any $\mathbb{Z}$-basis element $y\uplambda\in M^g_1$ with $y\in\uppi_g$ and $\uplambda\in\{\upalpha_1,\upalpha_2,\ldots,\upalpha_g,\upbeta_1,\upbeta_2,\ldots,\upbeta_{g-1}\}$.
\end{proposition}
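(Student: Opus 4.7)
The plan is to exploit the hypothesis $\uplambda\ne\upbeta_g$, which forces $s_1(y\uplambda)=0$ directly from the ``otherwise'' branch in the definition of $s_1$. Thus the third condition in~(\ref{contr4}) at $y\uplambda$ collapses to verifying that $s_0 d_1(y\uplambda)=y\uplambda$. Writing $\ell\in\{a_1,\ldots,a_g,b_1,\ldots,b_{g-1}\}$ for the letter associated with $\uplambda$, we have $d_1(y\uplambda)=(y\ell-y)\upchi$, so the task reduces to showing
\[
s_0(y\ell\upchi)-s_0(y\upchi)=y\uplambda.
\]

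The key observation that unlocks the proof is that none of the left-hand sides of the rewriting rules~(\ref{R5})--(\ref{R8}) has $\ell$ as its rightmost letter: inspection shows that each of them terminates in $b_g$ or $\overline{b_g}$. Consequently, if $N(y)=\ell_1\cdots\ell_k$, then the concatenation $\ell_1\cdots\ell_k\ell$ can be brought into normal form using, at most, a single type-(\ref{rrsimples}) cancellation $\ell_k\ell\to 1$ (which occurs exactly when $\ell_k=\ol{\ell}$). This places us squarely in the situation where the strengthened total-Fox-derivative formula of Remark~\ref{formulaubarvsd2} applies.

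Applying~(\ref{dtfgeneral}) to the word $\ell_1\cdots\ell_k\ell$ gives
\[
s_0(y\ell\upchi)=s_0(\ell_1\upchi)+\ell_1 s_0(\ell_2\upchi)+\cdots+\ell_1\cdots\ell_{k-1}s_0(\ell_k\upchi)+\ell_1\cdots\ell_k\, s_0(\ell\upchi)=s_0(y\upchi)+y\,s_0(\ell\upchi).
\]
Since~(\ref{deritotalFox}) yields $s_0(a_i\upchi)=\upalpha_i$ and $s_0(b_i\upchi)=\upbeta_i$, we get $s_0(\ell\upchi)=\uplambda$, and the required identity follows.

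I expect essentially no obstacle here: the delicate point is just recognizing that the algebraic hypothesis $\uplambda\ne\upbeta_g$ on the domain element coincides precisely with the combinatorial condition on $\ell$ that prevents the ``bad'' rewriting rules~(\ref{R5})--(\ref{R8}) from being activated when $\ell$ is appended on the right. The genuinely intricate cases of the third condition in~(\ref{contr4}), where $d_2 s_1$ contributes nontrivially and the explicit formulas for $T_n$ and $U^n$ must be fed into~(\ref{d2}), are all deferred to the separate $\uplambda=\upbeta_g$ case.
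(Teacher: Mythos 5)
Your proof is correct and follows essentially the same strategy as the paper: since $\uplambda\ne\upbeta_g$ forces $s_1(y\uplambda)=0$, the verification reduces to $s_0d_1(y\uplambda)=y\uplambda$, and the crux is that appending $\ell$ to the right of $N(y)$ cannot activate rules~(\ref{R5})--(\ref{R8}) because their left-hand sides all end in $b_g$ or $\ol{b_g}$. The only difference is presentational: the paper splits into the two cases $\ell_k=\ol{\ell}$ and $\ell_k\ne\ol{\ell}$ and computes each directly, whereas you invoke the strengthened form of~(\ref{dtfgeneral}) from Remark~\ref{formulaubarvsd2} to cover both at once via $s_0(y\ell\upchi)=s_0(y\upchi)+y\,s_0(\ell\upchi)$; the underlying computation is the same.
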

\begin{proof}
We only consider the case $\uplambda=\upalpha_i$ with $1\le i\le g$; the case $\uplambda=\upbeta_i$ with $1\le i<g$ is formally identical. Set $N(y)=\ell_1\cdots\ell_k$.

\medskip\noindent {\bf Case $\ell_k \neq \ol{a_i}\hspace{.35mm}$}: Note that $\ell_1 \cdots \ell_k a_i$ is in normal form. Since $s_1(y \upalpha_i) = 0$, we get
\begin{equation}\label{auxene}
s_0d_1 (y\upalpha_i) = s_0 (\ell_1 \cdots \ell_k a_i\upchi)- s_0 (\ell_1 \cdots \ell_k\upchi) = \ell_1 \cdots \ell_k \upalpha_i = y\upalpha_i-d_2s_1\left(y\upalpha_i\right).
\end{equation}

\medskip\noindent {\bf Case $\ell_k = \ol{a_i}\hspace{.35mm}$}: Note that $\ell_1 \cdots \ell_k a_i = \ell_1 \cdots \ell_{k-1}$, where the latter word is in normal form, so the calculation in~(\ref{auxene}) takes the form
\begin{align*}
s_0d_1(y \upalpha_i)
&= s_0(\ell_1 \cdots \ell_{k-1}\upchi) - s_0(\ell_1 \cdots \ell_{k}\upchi) = - \ell_1 \cdots \ell_{k-1} s_0(\ol{a_i}\upchi)\\
&= - \ell_1 \cdots \ell_{k-1} (- \ol{a_i} \upalpha_i) = y \upalpha_i= y \upalpha_i - d_2s_1(y\upalpha_i),
\end{align*}
again since $s_1(y \upalpha_i) = 0$.
\end{proof}

More involved is the proof of the analogue of Proposition~\ref{nodo1} for $\uplambda=\upbeta_g$:
\begin{proposition}
The third condition in~(\ref{contr4}) holds at any $\mathbb{Z}$-basis element $y\upbeta_g$ with $y\in\uppi_g$.
\end{proposition}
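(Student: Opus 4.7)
The identity at $y\upbeta_g$ unpacks, using $d_1(y\upbeta_g)=y(b_g-1)\upchi$, to the equation $s_0(yb_g\upchi)-s_0(y\upchi)+d_2s_1(y\upbeta_g)=y\upbeta_g$, which I plan to verify by case analysis on $N(y)=\ell_1\cdots\ell_k$ mirroring the three-way split used to define $s_1(y\upbeta_g)$. In each case the plan is to first determine $N(yb_g)$ by reading off the applicable rewriting rules, then expand $s_0(yb_g\upchi)-s_0(y\upchi)$ via~(\ref{dtfgeneral}), and finally evaluate $d_2s_1(y\upbeta_g)$ using the explicit expression for $d_2(\upomega)$ provided by~(\ref{hhh}) (or by~(\ref{nataobs})). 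The easy branch is when $y$ ends like neither $T_n$ nor $U^n$; there $s_1(y\upbeta_g)=0$, and either $\ell_k=\ol{b_g}$ (cancellation gives $N(yb_g)=\ell_1\cdots\ell_{k-1}$ and~(\ref{dtfgeneral}) yields $-\ell_1\cdots\ell_{k-1}s_0(\ol{b_g}\upchi)=y\upbeta_g$), or no rewriting is triggered on $yb_g$ (so $N(yb_g)=N(y)b_g$ and~(\ref{dtfgeneral}) gives $ys_0(b_g\upchi)=y\upbeta_g$ at once).

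When $y$ ends like $T_n$ but not $T_{n+1}$, write $N(y)=N(w)T_n$. The first identity in~(\ref{aux1}) together with~(\ref{lemma32}) shows that $N(yb_g)=N(w)\ol{P_{g-1}}b_ga_g^{n+1}$, from which $s_0(yb_g\upchi)-s_0(y\upchi)$ can be expanded via~(\ref{dtfgeneral}) as $w$ times an explicit linear combination of the basis elements. On the $d_2s_1$ side, $d_2s_1(y\upbeta_g)=wT_n\bigl(\sum_{i=1}^{n+1}U^i\bigr)d_2(\upomega)$, into which I substitute $d_2(\upomega)=s_0(\ol{U}\upchi)-b_g\upalpha_g-(1-\ol{U})\upbeta_g$ from~(\ref{hhh}). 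The coefficient $\sum_{i=1}^{n+1}U^i$ has been chosen precisely so that the $(1-\ol{U})\upbeta_g$ contribution telescopes and the $s_0(\ol{U}\upchi)$ together with the $b_g\upalpha_g$ pieces cancel the corresponding terms of the $s_0$-expansion, leaving only the single term $y\upbeta_g=wT_n\upbeta_g$.

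The remaining case, $y$ ends like $U^n$ but not $U^{n+1}$ with $n\ge1$, is analogous. Writing $N(y)=N(w)U^n$ and using~(\ref{lemma32}) to get $N(U^nb_g)=b_g\ol{a_g}^n$, a secondary sub-split on whether $N(w)$ ends in $\ol{b_g}$ is needed (since in that case an extra $\ol{b_g}b_g\to1$ cancellation takes place before $N(yb_g)$ is in normal form). In each sub-case, $d_2s_1(y\upbeta_g)=-wU^n\bigl(\sum_{i=0}^{n-1}\ol{U}^i\bigr)d_2(\upomega)$ is evaluated after applying~(\ref{nataobs}) to rewrite $d_2(\upomega)$ in terms of $s_0(U\upchi)$, and a telescope over $\ol{U}^i$ again collapses everything to $y\upbeta_g$. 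The principal obstacle throughout is this telescoping bookkeeping: the ranges and signs in the definition of $s_1(y\upbeta_g)$, namely $\sum_{i=1}^{n+1}U^i$ in the $T_n$-case and $-\sum_{i=0}^{n-1}\ol{U}^i$ in the $U^n$-case, are finely tuned so that every extra term produced by the non-trivial rewriting of $yb_g$ (including the subtle correction from the $\ol{b_g}b_g$ cancellation in the sub-split) is exactly absorbed, and the heart of the proof is the verification that this design works uniformly across all cases.
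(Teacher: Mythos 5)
Your overall strategy matches the paper's: the same three-way split (default case with $s_1=0$, the $U^n$ case, the $T_n$ case), the same use of~(\ref{lemma32}) to compute the effect of appending $b_g$, the same substitution of~(\ref{hhh})/(\ref{nataobs}) into $d_2s_1$, and the same telescoping cancellation. Your treatment of the default branch (including the $\ell_k=\ol{b_g}$ sub-case) and of the $U^n$ branch, with its sub-split on whether $N(w)$ ends in $\ol{b_g}$, is correct and mirrors the paper's Cases~(A) and~(B).

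The gap is in the $T_n$ branch. You assert that $N(yb_g)=N(w)\ol{P_{g-1}}b_ga_g^{n+1}$, but this is false in general: the concatenation of the normal-form word $N(w)=\ell_1\cdots\ell_r$ with the normal-form word $\ol{P_{g-1}}b_ga_g^{n+1}$ need not be in normal form, since the tail of $\ell_1\cdots\ell_r$ can cancel against a prefix of $\ol{P_{g-1}}$ (e.g.~if $\ell_r=b_{g-1}$). What is actually needed is the extended version of~(\ref{dtfgeneral}) from Remark~\ref{formulaubarvsd2}, which applies only if the word can be normalized using type-(\ref{rrsimples}) rules \emph{alone}. Verifying this is the technical heart of the paper's Case~(C): one must check (via the enumeration (C.1)--(C.7)) that after $\ol{P_{g-1}}$ is (possibly entirely) absorbed, the remaining word does not trigger rule~(\ref{R5}), i.e.~that no subword $a_gb_g$ appears. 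This is exactly where the hypothesis ``$y$ ends like $T_n$ but not $T_{n+1}$'' (encoded as condition~(\ref{iqiqiqkakamxmx}): $\ell_1\cdots\ell_r$ does not end like $a_gc_1\cdots c_{g-1}$) is used. Without that verification, the splitting $s_0(yb_g\upchi)=s_0(N(w)\upchi)+N(w)s_0(\ol{P_{g-1}}b_ga_g^{n+1}\upchi)$ — on which the rest of your telescope calculation rests — is not justified, and in fact would fail if a rewriting rule other than the cancellation rules were needed. You noticed the analogous (but much simpler, one-case) sub-split in the $U^n$ branch, yet missed that the $T_n$ branch requires a considerably more involved analysis of the same kind.
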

\begin{proof}
As above, set
\begin{equation}\label{provi}
N(y)=\ell_1\cdots\ell_k.
\end{equation}
The argument depends on the normal form of the coefficient of $\upchi$ in the first summand on the right-hand side of
\begin{equation}\label{kqqjsyyios}
s_0d_1(y\upbeta_g)=s_0(\ell_1\cdots\ell_kb_g\upchi)-s_0(\ell_1\cdots\ell_k\upchi).
\end{equation}

From the rewriting rules~(\ref{rrsimples})--(\ref{R8}), we see that the word $\ell_1\cdots\ell_kb_g$ is in normal form if and only if $\ell_k\not\in\{\ol{b_g},a_g\}$ and $y$ does not end like $U$. Moreover, in such a case, $s_1(y\upbeta_g)=0$ by definition, so the required conclusion follows just as in the first case in the proof of Proposition~\ref{nodo1}. Thus, it suffices to consider the following three mutually non-overlapping cases: {\bf \;(A)} $\ell_k=\ol{b_g}$, {\bf \;(B)} $y$ ends like $U$, and {\bf \;(C)} $\ell_k=a_g$.

\smallskip\noindent {\bf Case (A)\hspace{.35mm}}: This is the easiest instance, for $N(\ell_1\cdots\ell_k b_g)=\ell_1\cdots\ell_{k-1}$ is forced, and the required conclusion follows just as in the second case in the proof of Proposition~\ref{nodo1}. 

\smallskip\noindent {\bf Case (B)\hspace{.35mm}}: Assume that~(\ref{provi}) specializes to $N(y)=\ell_1\cdots\ell_rU^{n+1}$ for some $n\geq0$, with the (normal-form) word $\ell_1\cdots\ell_r$ not ending like $U$. Using~(\ref{lemma32}), the expression in~(\ref{kqqjsyyios}) becomes
\begin{align}
s_0d_1(y\upbeta_g)&=s_0(\ell_1\cdots\ell_r U^{n+1}b_g\upchi)-s_0(\ell_1\cdots\ell_rU^{n+1}\upchi)\nonumber\\&
=s_0(\ell_1\cdots\ell_rb_g\ol{a_g}^{n+1}\upchi)-s_0(\ell_1\cdots\ell_rU^{n+1}\upchi),\label{tbced}
\end{align}
whose analysis depends, in principle, on the normal form of
\begin{equation}\label{underb1}
\underbrace{\ell_1\cdots\ell_r}\underbrace{b_g\ol{a_g}^{n+1}}.
\end{equation}
Since the two underbraced words in~(\ref{underb1}) are in normal form, since $\ell_r\neq a_g$ (for $\ell_1\cdots\ell_rU^{n+1}$ is in normal form), and since $\ell_1\cdots\ell_r$ does not end like $U$, we see that~(\ref{underb1}) fails to be in normal form if and only if $\ell_r=\ol{b_g}$, in which case the normal form of~(\ref{underb1}) would be $\ell_1\cdots\ell_{r-1}\ol{a_g}^{n+1}$ (for $\ell_{r-1}\neq a_g$, as $\ell_1\cdots\ell_r$ is in normal form). Either way, Remark~\ref{formulaubarvsd2} yields
$$
s_0(\ell_1\cdots\ell_rb_g\ol{a_g}^{n+1}\upchi)=s_0(\ell_1\cdots\ell_r\upchi)+\ell_1\cdots\ell_rs_0(b_g\ol{a_g}^{n+1}\upchi),
$$
so that~(\ref{tbced}) becomes
\begin{align}
s_0d_1(y\upbeta_g)&=\ell_1\cdots\ell_r\left( s_0(b_g\ol{a_g}^{n+1}\upchi)-s_0(U^{n+1}\upchi)\right)\nonumber\\&
=\ell_1\cdots\ell_r\left(\upbeta_g-b_g\sum_{i=1}^{n+1}\ol{a_g}^i\upalpha_g-\sum_{i=0}^nU^is_0(U\upchi) \right),
\label{s0d1ybgB2}
\end{align}
where the latter equality uses~(\ref{tqoiusazx}) and~(\ref{iuytghjkmnb}). On the other hand, by definition, (\ref{hhh}) and~(\ref{nataobs}),
\begin{align}
d_2s_1(y\upbeta_g)
&=-y\sum_{i=0}^n\ol{U}^id_2(\upomega)=-\ell_1\cdots\ell_r\sum_{i=1}^{n+1}U^id_2(\upomega)\nonumber \\
&=-\ell_1\cdots\ell_r\sum_{i=1}^{n+1}U^i\left(s_0(\ol{U}\upchi)-b_g\upalpha_g-(1-\ol{U})\upbeta_g\right)\nonumber\\
&=-\ell_1\cdots\ell_r\sum_{i=1}^{n+1}U^i\left(-\ol{U}s_0(U\upchi)-b_g\upalpha_g-(1-\ol{U})\upbeta_g\right)\nonumber\\
&=\ell_1\cdots\ell_r\left(\sum_{i=0}^{n}U^is_0(U\upchi)+\sum_{i=1}^{n+1}U^ib_g\upalpha_g+(U^{n+1}-1)\upbeta_g   \right)\nonumber\\
&=\ell_1\cdots\ell_r\left(\sum_{i=0}^{n}U^is_0(U\upchi)+\sum_{i=1}^{n+1}b_g\ol{a_g}^i\upalpha_g-\upbeta_g   \right)+y\upbeta_g,\label{d2s1B1special}
\end{align}
where the latter equality uses~(\ref{lemma32}). The required conclusion is apparent from~(\ref{s0d1ybgB2}) and~(\ref{d2s1B1special}).

\medskip\noindent {\bf Case (C)\hspace{.35mm}}: We can assume that~(\ref{provi}) specializes to $N(y)=\ell_1\cdots\ell_rT_n=\ell_1\cdots\ell_ra_g\ol{U}^n$, for some $n\geq0$, with 
\begin{equation}\label{iqiqiqkakamxmx}
\mbox{the (normal-form) word $\ell_1\cdots\ell_r$ not ending like $a_gc_{1}\cdots c_{g-1}$.}
\end{equation}
Using~(\ref{R5}) and~(\ref{lemma32}), the expression in~(\ref{kqqjsyyios}) becomes
\begin{align}
s_0d_1(y\upbeta_g)&=s_0\left(\ell_1\cdots\ell_ra_g\ol{U}^nb_g\upchi\right)-s_0\left(\ell_1\cdots\ell_ra_g\ol{U}^n\upchi\right) \nonumber\\ &=s_0\left(\ell_1\cdots\ell_ra_gb_ga_g^n\upchi\right)-s_0\left(\ell_1\cdots\ell_ra_g\ol{U}^n\upchi\right) \nonumber\\ &=s_0\left(\ell_1\cdots\ell_r \ol{P_{g-1}} b_ga_g^{n+1}\upchi\right)-s_0\left(\ell_1\cdots\ell_ra_g\ol{U}^n\upchi\right),\label{tbced2}
\end{align}
whose analysis depends, in principle, on the normal form of
\begin{equation}\label{underb1bis}
\underbrace{\ell_1\cdots\ell_r}\underbrace{\ol{P_{g-1}}b_ga_g^{n+1}}.
\end{equation}
Since the two underbraced words in~(\ref{underb1bis}) are in normal form, and since $\ell_r\neq\ol{a_g}$ (for $\ell_1\cdots\ell_ra_g\ol{U}^n$ is in normal form), we see that the only option for~(\ref{underb1bis}) not to be in normal form is that rewriting rules~(\ref{rrsimples}) can iteratively be applied between the right factors of the first underbraced word and the left factors of the second underbraced word in~(\ref{underb1bis}), until reaching the normal form. Indeed,~(\ref{iqiqiqkakamxmx}) and the fact that $\ell_1\cdots\ell_r$ is in normal form assure that, if the iterative rewriting process with rules~(\ref{rrsimples}) reaches a stage where the portion $\ol{P_{g-1}}$ in the second underbraced word in~(\ref{underb1bis}) is cancelled out, then this process would either stop or, else, continue with the application of one final rewriting rule in~(\ref{rrsimples}), namely the rule $\ol{b_g}b_g\to1$ ---but not the rule~(\ref{R5}). Either way, the process stops producing the required normal form. Explicitly, and for completeness, we remark that all the possibilities  for this process to stop producing the normal form of~(\ref{underb1bis}) are
\begin{itemize}
\item[{\bf (C.1)}] $\ell_1 \cdots \ell_r\ol{P_{g-1}} b_ga_g^{n+1}$ is already in normal form, or else
\end{itemize}
there are integers $t$ and $s$, with $1 \leq t \leq r$ and $g-1 \geq s \geq 1$, such that one (and only one) of the following situations holds:
\begin{itemize}
\item[{\bf (C.2)}] $\ell_t \cdots \ell_r = \ol{b_s}c_{s+1} \cdots c_{g-2}c_{g-1}$ with $\ell_{t-1}\neq\ol{a_s}.$ (Note that the latter inequality holds vacuously if $t=1$. A similar observation applies in the next five cases.)
\item[{\bf (C.3)}] $\ell_t \cdots \ell_r = \ol{a_s}\ol{b_s}c_{s+1} \cdots c_{g-2}c_{g-1}$ with $\ell_{t-1}\neq b_s$.
\item[{\bf (C.4)}] $\ell_t \cdots \ell_r = b_s\ol{a_s}\ol{b_s}c_{s+1} \cdots c_{g-2}c_{g-1}$ with $\ell_{t-1}\neq a_s$.
\item[{\bf (C.5)}] $\ell_t \cdots \ell_r = c_{s}c_{s+1} \cdots c_{g-2}c_{g-1}$ with $s\geq2$ and $\ell_{t-1}\neq \ol{b_{s-1}}$.
\item[{\bf (C.6)}] $\ell_t \cdots \ell_r = c_{s}c_{s+1} \cdots c_{g-2}c_{g-1}$ with $s=1$, $\ell_{t-1}\neq a_g$ (in view of~(\ref{iqiqiqkakamxmx})), $\ell_{t-1}\neq\ol{a_1}$ (since $\ell_1\cdots\ell_r$ is in normal form) and $\ell_{t-1}\neq\ol{b_g}$.
\item[{\bf (C.7)}] $\ell_t \cdots \ell_r = \ol{b_g}P_{g-1}$, and necessarily $\ell_{t-1}\neq \ol{a_{g}}$ (as $\ell_1\cdots\ell_r$ is in normal form).
\end{itemize}
So we need to check the validness of the third condition in~(\ref{contr4}) at $y\upbeta_g$ in each of these seven possibilities. As in case (B), such a task is simplified by Remark~\ref{formulaubarvsd2} and the discussion above. Namely, in all seven cases, the $s_0$-value of~(\ref{underb1bis}) satisfies
\begin{equation*}
s_0(\ell_1 \cdots \ell_r\ol{P_{g-1}} b_ga_g^{n+1}\upchi)=s_0(\ell_1 \cdots \ell_r\upchi)+\ell_1\cdots\ell_rs_0(\ol{P_{g-1}}b_ga_g^{n+1}\upchi),
\end{equation*}
so that~(\ref{tbced2}) becomes
%
\begin{align*}
s_0d_1(y\upbeta_g)&=\ell_1\cdots\ell_r\left( s_0\left(\ol{P_{g-1}}b_ga_g^{n+1}\upchi\right)-s_0\left(a_g\ol{U}^n\upchi\right)\right)\\
&=\ell_1\cdots\ell_r\left( s_0\left(\ol{P_{g-1}}b_ga_g^{n+1}\upchi\right)-\upalpha_g-a_g\sum^n_{i=1}\ol{U}^{i-1}s_0(\ol{U}\upchi)\right),
\end{align*}
whereas~(\ref{hhh}) gives
\begin{align*}
d_2s_1(y\upbeta_g)&=\ell_1\cdots\ell_ra_g\sum_{i=0}^n\ol{U}^{i-1}\left( s_0(\ol{U}\upchi)-b_g\upalpha_g-(1-\ol{U})\upbeta_g \right)\\&=\ell_1\cdots\ell_ra_g\left(\sum_{i=0}^n\ol{U}^{i-1}\left( s_0(\ol{U}\upchi)-b_g\upalpha_g\right)+(\ol{U}^n-U)\upbeta_g \right).
\end{align*}
Thus $s_0d_1(y\upbeta_g)+d_2s_1(y\upbeta_g)=\ell_1\cdots\ell_r (A+a_g\ol{U}^n\upbeta_g)=\ell_1\cdots\ell_rA+y\upbeta_g=y\upbeta_g$, as
\begin{align*}
A&=s_0\!\left(\ol{P_{g-1}}b_ga_g^{n+1}\upchi\right)-\upalpha_g+a_gUs_0(\ol{U}\upchi)-a_g\sum_{i=0}^n\ol{U}^{i-1}b_g\upalpha_g-a_g U\upbeta_g
\end{align*}
vanishes. Indeed, use~(\ref{dtfgeneral}) to write the first summand in the above expression for $A$ as

\begin{align}
s_0\!\left(\ol{P_{g-1}}b_ga_g^{n+1}\upchi\right)&=s_0\!\left(\ol{P_{g-1}}\upchi\right)+\ol{P_{g-1}}s_0\!\left(b_ga_g^{n+1}\upchi\right)
=s_0(a_gU\upchi)+a_gU\left(\upbeta_g+b_g\sum_{i=0}^na_g^i\upalpha_g\right).\nonumber
\end{align}
The last equality uses an analogue of~(\ref{tqoiusazx}) together with the equality $a_gU=\ol{P_{g-1}}$.
Thus
\begin{align*}
A&=s_0(a_gU\upchi)+a_gUb_g\sum_{i=0}^na_g^i\upalpha_g-\upalpha_g+a_gUs_0(\ol{U}\upchi)-a_g\sum_{i=0}^n\ol{U}^{i-1}b_g\upalpha_g \\
&=s_0(a_gU\upchi)+a_gUb_g\sum_{i=0}^na_g^i\upalpha_g-\upalpha_g-a_gs_0(U\upchi)-a_gU\sum_{i=0}^n\ol{U}^ib_g\upalpha_g, \quad \mbox{by~(\ref{nataobs}),} \\
&=a_gUb_g\sum_{i=0}^na_g^i\upalpha_g-a_gU\sum_{i=0}^n\ol{U}^ib_g\upalpha_g, \quad \mbox{by~(\ref{laosjjsssjjss}),} \\
&=a_gUb_g\sum_{i=0}^na_g^i\upalpha_g-a_gU\sum_{i=0}^nb_ga_g^{i}\upalpha_g=0, \quad \mbox{by~(\ref{lemma32}),}
\end{align*}
as asserted.
\end{proof}

The verification of the last condition in~(\ref{contr4}) is just as easy as that of the first condition:
\begin{proposition}
The last condition~(\ref{contr4}) holds true.
\end{proposition}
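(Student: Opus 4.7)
The simplest route is to deduce the last condition from the three identities already verified, rather than attempt a direct case-by-case check of $s_1d_2(y\upomega)=y\upomega$ for every $y\in\uppi_g$. Post-composing the third condition $d_2s_1+s_0d_1=\id_{M_1^g}$ ---established in the previous two propositions--- with $d_2$ on the right, and invoking the resolution identity $d_1d_2=0$, one obtains
$$d_2\circ s_1\circ d_2 \;=\; (\id_{M_1^g}-s_0d_1)\circ d_2 \;=\; d_2 - s_0(d_1d_2) \;=\; d_2,$$
i.e.\ $d_2\circ(s_1d_2-\id_{M_2^g})=0$. Proposition~\ref{spelledout} asserts the exactness of $M_*^g$, so the boundary $d_2\colon M_2^g\to M_1^g$ is injective, whence $s_1d_2=\id_{M_2^g}$ as required.

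As a sanity check, I would also evaluate $s_1d_2$ at the module generator $\upomega$. Using~(\ref{d2}) and the fact that $s_1$ annihilates every basis element $y\uplambda$ with $\uplambda\neq\upbeta_g$, this reduces to $s_1(P_{g-1}a_g\,\upbeta_g)-s_1(\upbeta_g)$. The word $P_{g-1}a_g$ is in normal form (its tail $\ol{b_{g-1}}a_g$ matches no left-hand side of~(\ref{rrsimples})--(\ref{R8})); it ends like $T_0=a_g$, but neither like $T_1=a_gP_{g-1}a_g$ (by length, for $g\geq 2$) nor like any $U^n$ with $n\geq 1$ (since $N(U^n)$ terminates in $\ol{a_1}$). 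The first clause of the definition of $s_1$ with $n=0$, together with the identity $P_{g-1}a_g\cdot U=1$ in $\uppi_g$, therefore yields $s_1(P_{g-1}a_g\,\upbeta_g)=\upomega$, while the empty word falls into the ``otherwise'' clause so $s_1(\upbeta_g)=0$; hence $s_1d_2(\upomega)=\upomega$.

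The step I expect to be the main would-be obstacle ---extending this pointwise identity at $\upomega$ to a full $\mathbb{Z}$-linear equality on every basis vector $y\upomega$, in the absence of $\uppi_g$-linearity of $s_1$--- is precisely what the injectivity argument in the first paragraph handles. Any direct approach instead would force one to analyse the (highly variable) normal form of $y\,P_{g-1}a_g$ for arbitrary $y\in\uppi_g$, feeding back into the piecewise structure of $s_1$; the homological shortcut sidesteps that bookkeeping entirely and makes the proof genuinely as brief as the author advertises.
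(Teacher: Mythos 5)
Your first paragraph reproduces exactly the paper's argument: post-compose the already-verified identity $d_2s_1+s_0d_1=\id_{M_1^g}$ with $d_2$, use $d_1d_2=0$ to get $d_2(s_1d_2-\id_{M_2^g})=0$, and conclude from the injectivity of $d_2$. The additional sanity check and the remark about the failure of $\uppi_g$-linearity of $s_1$ are both correct but are supplementary to the proof the paper actually gives, which is identical to your first paragraph.
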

\begin{proof}
Since $d_1d_2=0$, we have $d_2(s_1d_2-\id_{M_2^g}) = (d_2s_1)d_2-d_2=(\id_{M_1^g}-s_0d_1)d_2-d_2 = 0$,
so that $s_1d_2=\id_{M_2^g}$, as $d_2$ is injective.
\end{proof}

\section{Diagonal approximations and contracting homotopies}\label{tpanddapp}
Recall that, for $\uppi_g$-modules $A_i$, cup-products $H^*(\uppi_g;A_1)\otimes H^*(\uppi_g;A_2)\to H^*(\uppi_g;A_1\otimes A_2)$ can be assessed once a diagonal approximation $\Delta_*^g\colon M_*^g\to M_*^g\otimes M_*^g$ is made explicit. Attaining the corresponding goal for $\uppi_{g_1,\ldots,g_n}:=\uppi_{g_1}\times\cdots\times\uppi_{g_n}$ requires describing a diagonal approximation $\Delta_*^{g_1,\ldots,g_n}\colon M_*^{g_1,\ldots,g_n}\to M_*^{g_1,\ldots,g_n}\otimes M_*^{g_1,\ldots,g_n}$ where, as in Remark~\ref{aruonvcrujg}, $M_*^{g_1,\ldots,g_n}=M_*^{g_1}\otimes\cdots\otimes M_*^{g_n}$. With this in mind, we recall:

\begin{proposition}[{\cite[Proposition~3.1]{MR1200878}}]\label{alalyvyvnene}
Let $G$ be a group and
\begin{equation*}
\cdots\to X_{n+1}\stackrel{\delta_{n+1}\;}{\longrightarrow} X_n \stackrel{\delta_n}{\longrightarrow} \cdots\stackrel{\delta_3}{\longrightarrow} X_2\stackrel{\delta_2}{\longrightarrow} X_1 \stackrel{\delta_1}{\longrightarrow} X_0 \stackrel{\varepsilon}{\longrightarrow} \mathbb{Z}\to 0
\end{equation*}
be a free $G$-resolution of the trivial $G$-module $\mathbb{Z}$. For each $q\ge0$ let $B_q$ be a $G$-basis for $X_q$, and assume that $\varepsilon(b_0)=1$ for each $b_0\in B_0$. Consider the tensor square resolution $X\otimes X$ of $\,\mathbb{Z}=:(X\otimes X)_{-1}$. If $U=\{U_q\colon(X\otimes X)_q\to(X\otimes X)_{q+1}\}_{q\geq-1}$ is a contracting homotopy for
\begin{equation*}
\cdots\to (X\otimes X)_n \to \cdots\to (X\otimes X)_2\to (X\otimes X)_1 \to (X\otimes X)_0 \to \mathbb{Z}\to 0,
\end{equation*}
then a diagonal approximation $\Delta\colon X\to X\otimes X$ is determined inductively on basis elements $b_q\in B_q$ by setting $\Delta_0(b_0)=b_0\otimes b_0$ and, for $q>0$, $\Delta_q(b_q)=U_{q-1}\circ\Delta_{q-1}\circ\delta_{q}(b_q)$. 
\end{proposition}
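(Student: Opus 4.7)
The proposition is proved by straightforward induction on the degree $q$, exploiting the fact that the tensor-square complex is acyclic (as witnessed by the contracting homotopy $U$) to lift the identity of $\mathbb{Z}$ to a chain map. The plan is to define $\Delta_q$ on each $G$-basis element $b_q\in B_q$ via the given recursion and then extend $G$-equivariantly to all of $X_q$; this extension is unambiguous precisely because $B_q$ freely $G$-generates $X_q$. Two things then need verification: (i) the augmentation compatibility $(\varepsilon\otimes\varepsilon)\circ\Delta_0=\varepsilon$, and (ii) the chain-map identity $\delta^{\otimes}_q\circ\Delta_q=\Delta_{q-1}\circ\delta_q$ in each degree $q\geq 1$, where $\delta^{\otimes}$ denotes the tensor-square differential.

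For (i), on a basis element $b_0\in B_0$ one computes $(\varepsilon\otimes\varepsilon)(b_0\otimes b_0)=\varepsilon(b_0)\cdot\varepsilon(b_0)=1$, using the hypothesis $\varepsilon(b_0)=1$; $G$-equivariance of all maps in sight propagates this to $X_0$. For (ii), fix $q\geq 1$, assume inductively that $\delta^{\otimes}_{q-1}\Delta_{q-1}=\Delta_{q-2}\delta_{q-1}$ (interpreted, when $q=1$, as $(\varepsilon\otimes\varepsilon)\Delta_0=\varepsilon$), and evaluate on a basis element $b_q\in B_q$:
\begin{equation*}
\delta^{\otimes}_q\Delta_q(b_q)=\delta^{\otimes}_q\,U_{q-1}\,\Delta_{q-1}\,\delta_q(b_q).
\end{equation*}
The contracting-homotopy relation $\delta^{\otimes}_q U_{q-1}+U_{q-2}\,\delta^{\otimes}_{q-1}=\mathrm{id}$ on $(X\otimes X)_{q-1}$ allows one to replace the left factor $\delta^{\otimes}_q U_{q-1}$ by $\mathrm{id}-U_{q-2}\delta^{\otimes}_{q-1}$, yielding
\begin{equation*}
\delta^{\otimes}_q\Delta_q(b_q)=\Delta_{q-1}\delta_q(b_q)-U_{q-2}\,\delta^{\otimes}_{q-1}\Delta_{q-1}\,\delta_q(b_q).
\end{equation*}
The induction hypothesis turns the second summand into $U_{q-2}\Delta_{q-2}\delta_{q-1}\delta_q(b_q)$, which vanishes by $\delta_{q-1}\delta_q=0$. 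The case $q=1$ is entirely analogous, the only adjustment being that the relevant homotopy identity reads $\delta^{\otimes}_1 U_0+U_{-1}(\varepsilon\otimes\varepsilon)=\mathrm{id}$, and the vanishing of the extra term uses step (i) together with $\varepsilon\delta_1=0$.

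No step presents a genuine obstacle; the only point deserving a moment of care is that $\Delta$ is defined on basis elements and then extended $G$-equivariantly, whereas $U$ need not be $G$-equivariant. This causes no problem, because once the chain-map identity is verified on a basis element $b_q$, the $G$-equivariance of $\delta^{\otimes}_q$, $\delta_q$, $\Delta_q$, and $\Delta_{q-1}$ automatically propagates it to every $g\cdot b_q\in X_q$, and hence to all of $X_q$.
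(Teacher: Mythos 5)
Your proof is correct. The paper itself provides no proof for this statement (it cites Handel's Proposition~3.1); your argument is the standard inductive lifting of a chain map through a contracting homotopy, and you correctly flag the one genuinely delicate point: the chain-map identity is an equality of $G$-equivariant maps, so it suffices to verify it on $G$-basis elements even though the contracting homotopy $U$ is only $\mathbb{Z}$-linear.
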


Proposition~\ref{alalyvyvnene} allows us to translate the task of describing a formula for a diagonal approximation $\Delta_*^{g_1,\ldots,g_n}\colon M_*^{g_1,\ldots,g_n}\to M_*^{g_1,\ldots,g_n}\otimes M_*^{g_1,\ldots,g_n}$  into describing an explicit contracting homotopy for the $G$-resolution $M^{g_1,\ldots, g_n}_*\otimes M^{g_1,\ldots, g_n}_*$ of $\mathbb{Z}$.

\begin{lemma}\label{qqnnzzhysfbut}
For $1\le i\le n$, let $f_i,g_i\colon (K_i,\delta_i)\to (K'_i,\delta'_i)$ be maps of chain complexes. Assume there are chain homotopies $s_i\colon f_i\simeq g_i$ (meaning $\delta'_i s_i+s_i\delta_i=f_i-g_i$). Then the tensor maps $$f=\!\!\bigotimes_{1\le i\le n}\!\! f_i,\;g=\!\!\bigotimes_{1\le i\le n} \!\!g_i\colon\! \bigotimes_{1\le i\le n} \!\!(K_i,\delta_i)\to \bigotimes_{1\le i\le n}\!\!(K'_i,\delta'_i)$$ are chain homotopic through $s=\sum_{i=1}^ng_1\otimes\cdots\otimes g_{i-1}\otimes s_i\otimes f_{i+1}\otimes\cdots\otimes f_n\colon f\simeq g$. Explicitly, by the standard sign convention for the tensor product of chain maps (see the final paragraph in~\cite[Section~V.2]{MR672956}),
$$
s(\kappa_1\otimes\cdots\otimes\kappa_n)=\sum_{i=1}^n (-1)^{d_i} g_1(\kappa_1)\otimes\cdots\otimes g_{i-1}(\kappa_{i-1})\otimes s_i(\kappa_i)\otimes f_{i+1}(\kappa_{i+1})\otimes\cdots\otimes f_n(\kappa_n),
$$
where $d_i$ is the degree of $\kappa_1\otimes\cdots\otimes\kappa_{i-1}$.
\end{lemma}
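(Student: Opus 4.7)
The plan is to prove the lemma by induction on $n$, reducing everything to the case $n=2$. The base case $n=1$ is immediate from the definitions. For $n=2$, I would verify the homotopy identity $\delta' s + s\delta = f - g$ directly on an elementary tensor $\kappa_1\otimes\kappa_2$, where the total differentials are given by the signed Leibniz rule $\delta'(x\otimes y)=\delta'_1(x)\otimes y+(-1)^{|x|}x\otimes\delta'_2(y)$ (and similarly for $\delta$). The candidate homotopy reads
\[
s(\kappa_1\otimes\kappa_2)=g_1(\kappa_1)\otimes s_2(\kappa_2)+(-1)^{|\kappa_1|}s_1(\kappa_1)\otimes f_2(\kappa_2).
\]

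Expanding $\delta'\circ s$ and $s\circ\delta$ by the Leibniz rule produces eight terms which I would organize as ``diagonal'' terms (where the outer differential acts on the same tensor slot as some $s_j$) and ``cross'' terms (where it acts on the opposite slot). The four cross terms cancel in pairs: the two Koszul signs from commuting $\delta'_i$ or $\delta_i$ past the degree-$1$ map $s_j$ ($i\neq j$) conspire with the chain-map property of $f_i$ and $g_i$ to produce opposite signs on matching monomials. The four diagonal terms aggregate into $(\delta'_1 s_1+s_1\delta_1)\otimes f_2+g_1\otimes(\delta'_2 s_2+s_2\delta_2)$, which, upon substituting the homotopy relations $\delta'_i s_i+s_i\delta_i=f_i-g_i$, telescopes to $f_1\otimes f_2-g_1\otimes g_2$, as required.

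For the inductive step, write $\bigotimes_{i=1}^n K_i=K_1\otimes\bigl(\bigotimes_{i=2}^n K_i\bigr)$ and let $F=\bigotimes_{i\ge 2}f_i$, $G=\bigotimes_{i\ge 2}g_i$; by inductive hypothesis, the formula of the lemma (with $n$ replaced by $n-1$) provides a chain homotopy $S\colon F\simeq G$. Applying the already-established $n=2$ case to $(f_1,F)$ with homotopies $(s_1,S)$ yields a chain homotopy $f_1\otimes F\simeq g_1\otimes G$. Matching this with the stated explicit formula reduces to observing that the global sign $(-1)^{d_i}$ is precisely the Koszul sign produced when the degree-$1$ map $s_i$ is commuted past the factor $\kappa_1\otimes\cdots\otimes\kappa_{i-1}$ of total degree $d_i$; the internal signs from $S$ then assemble into the full $(-1)^{d_i}$ for $i\ge 2$, while the $i=1$ term carries the trivial sign $(-1)^{d_1}=1$.

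The only step with any subtlety is the sign bookkeeping in the $n=2$ computation; once the standard Mac Lane tensor convention from~\cite[Section~V.2]{MR672956} is fixed at the outset, all signs fall into place. I expect the rest of the argument to be essentially mechanical, so I would keep the write-up to the $n=2$ verification, with the inductive step recorded as a formal consequence.
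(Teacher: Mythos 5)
Your overall approach --- induction on $n$ with a hands-on verification for $n=2$ --- matches the paper's proof, which simply cites Proposition 9.1 in Chapter V of Mac Lane's \emph{Homology} together with induction. The strategy is sound and your description of how the eight terms split into ``diagonal'' and ``cross'' pieces is exactly how the verification is supposed to go.

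However, the explicit candidate homotopy you wrote down for $n=2$ has a sign-placement error that would cause that very verification to fail. You proposed
\[
s(\kappa_1\otimes\kappa_2)=g_1(\kappa_1)\otimes s_2(\kappa_2)+(-1)^{|\kappa_1|}\,s_1(\kappa_1)\otimes f_2(\kappa_2),
\]
but the lemma's formula (specialized to $n=2$: $d_1=0$ so $(-1)^{d_1}=1$, and $d_2=|\kappa_1|$) gives
\[
s(\kappa_1\otimes\kappa_2)=s_1(\kappa_1)\otimes f_2(\kappa_2)+(-1)^{|\kappa_1|}\,g_1(\kappa_1)\otimes s_2(\kappa_2).
\]
The Koszul sign $(-1)^{|\kappa_1|}$ arises from commuting the degree-$1$ map $s_2$ past $\kappa_1$, so it must sit on the $g_1\otimes s_2$ term, not on $s_1\otimes f_2$. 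With the sign where you placed it, the cross terms do \emph{not} cancel: for instance, $\delta'_1 g_1(\kappa_1)\otimes s_2(\kappa_2)$ from $\delta'\circ s$ and $g_1\delta_1(\kappa_1)\otimes s_2(\kappa_2)$ from $s\circ\delta$ both appear with a plus sign and add up rather than annihilate, and the $\kappa_1$-diagonal piece comes out as $(\delta'_1 s_1 - s_1\delta_1)\otimes f_2$ (minus, not plus), which does not match the homotopy relation $\delta'_1 s_1 + s_1\delta_1 = f_1-g_1$. Once you move the sign to the correct term, the cancellation of cross terms and the telescoping of the diagonal terms go through exactly as you describe, and the inductive step is then a matter of carefully tracking the accumulated Koszul signs to assemble $(-1)^{d_i}$ --- which you outline correctly.
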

\begin{proof}
Use induction and Proposition~9.1 in Chapter V of~\cite{MR0349792}.
\end{proof}

We use Lemma~\ref{qqnnzzhysfbut} in combination with the following observations. For a $G$-resolution
\begin{equation}\label{laresol}
\cdots\stackrel{\partial_3}\longrightarrow F_2\stackrel{\partial_2}\longrightarrow F_1\stackrel{\partial_1}\longrightarrow F_0\stackrel{\varepsilon}\longrightarrow\mathbb{Z}\to0,
\end{equation}
let $F_*$ denote the ``deleted'' sequence $\;\cdots\stackrel{\partial_3}\longrightarrow F_2\stackrel{\partial_2}\longrightarrow F_1\stackrel{\partial_1}\longrightarrow F_0\to 0$. Choose a $\mathbb{Z}$-map $s_{-1}\colon\mathbb{Z}\to F_0$ such that $\varepsilon\circ s_{-1}$ is the identity on $\mathbb{Z}$. It is transparent that the composite $s_{-1}\circ\varepsilon\colon F_0\to F_0$ extends to a map $s_{-1}\circ\varepsilon\colon F_*\to F_*$ of chain complexes vanishing on elements of positive dimension. Further, as observed in the proof of~\cite[Proposition~3.2]{MR1200878}, a collection of maps $s=\{s_i\colon F_i\to F_{i+1}\colon i\geq0\}$ assembles a homotopy $s\colon 1_{F_*}\simeq s_{-1}\circ\varepsilon$ if and only if the ``augmented'' collection $\{s_i\colon i\ge-1\}$ is a contracting homotopy for~(\ref{laresol}). In particular, the contracting homotopy~(\ref{hglksaytpowq}) constructed in the previous section yields explicit homotopies $s^{g_i}\colon 1_{M^{g_i}_*}\simeq s_{-1}^{g_i}\circ\epsilon^{g_i}$, and Lemma~\ref{qqnnzzhysfbut} implies:

\begin{corollary}\label{porfinlascontracciones} Let $\uppi_{g_1,\ldots,g_n}=\uppi_{g_1}\times\cdots\times\uppi_{g_n}$.
A contracting homotopy $u=\{u_k\colon k\ge-1\}$ for the (tensor) $\uppi_{g_1,\ldots,g_n}$-resolution $M_{*}^{g_1,\ldots, g_{n}}$ of $\mathbb{Z}$ is determined by the additive maps $u_{-1}:\z\to M_0^{g_1,\ldots, g_n}$ and $u_{k}:M_k^{g_1,\ldots, g_n}\to M^{g_1,\ldots, g_n}_{k+1}$, $k\ge0$, given by $\,u_{-1}(1)=\upchi^{g_1}\otimes \cdots \otimes \upchi^{g_n}$ and, for $k\ge0$, 
$$
u_k(y_1 \otimes \cdots \otimes y_n) = \sum_{i=1}^n 
\left( s_{-1}^{g_1}\circ\epsilon\right)(y_1)\otimes\cdots\otimes 
\left( s_{-1}^{g_{i-1}}\circ\epsilon\right)(y_{i-1})
\otimes s^{g_i}(y_i)
\otimes y_{i+1}\otimes\cdots\otimes y_n.
$$
\end{corollary}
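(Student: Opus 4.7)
The plan is to apply Lemma~\ref{qqnnzzhysfbut} directly and then invoke the general principle, recalled in the paragraph just before the statement, that a chain homotopy $s\colon 1_{F_*}\simeq s_{-1}\circ\varepsilon$ on a deleted resolution assembles (together with the $\mathbb{Z}$-splitting $s_{-1}$) into a contracting homotopy on the full augmented resolution. So my task decomposes into three small pieces: (a) check the augmentation identity for $u_{-1}$, (b) produce the tensor chain homotopy by applying Lemma~\ref{qqnnzzhysfbut}, and (c) reconcile the explicit formula from the lemma with the sign-free expression displayed in the statement.

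For (a), the identity $\epsilon^{g_1,\ldots,g_n}\circ u_{-1}=\id_{\mathbb{Z}}$ is immediate from the definition $u_{-1}(1)=\upchi^{g_1}\otimes\cdots\otimes\upchi^{g_n}$ together with the first relation in~(\ref{contr4}) applied in each tensor factor. For (b), I would apply Lemma~\ref{qqnnzzhysfbut} with $f_i:=\id_{M_*^{g_i}}$ and $g_i:=s^{g_i}_{-1}\circ\epsilon^{g_i}$ (the latter extended by zero in positive degrees so that it is an honest endomorphism of the deleted complex), using as homotopies the maps $s^{g_i}$ constructed in Section~3. This yields a chain homotopy between $\id_{M_*^{g_1,\ldots,g_n}}$ and $u_{-1}\circ\epsilon^{g_1,\ldots,g_n}$ on the deleted tensor resolution, which, combined with $u_{-1}$, is precisely the data of a contracting homotopy in the sense of~(\ref{contr4}).

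The one point that requires genuine (if minor) care is (c): the formula delivered by Lemma~\ref{qqnnzzhysfbut} carries a sign $(-1)^{d_i}$, where $d_i$ is the total degree of $g_1(y_1)\otimes\cdots\otimes g_{i-1}(y_{i-1})$, whereas the displayed formula for $u_k$ has no signs. The reconciliation is that each $g_j=s^{g_j}_{-1}\circ\epsilon^{g_j}$ is identically zero on $M_k^{g_j}$ for $k>0$; hence the $i$-th summand of the homotopy evaluated on a basis tensor $y_1\otimes\cdots\otimes y_n$ can be nonzero only when $y_1,\ldots,y_{i-1}$ all lie in degree $0$, in which case $d_i=0$ and $(-1)^{d_i}=+1$. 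This collapses the lemma's expression onto the one in the statement.

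I expect the only real obstacle is this last sign/vanishing bookkeeping; once it is carried out, the corollary follows formally from Lemma~\ref{qqnnzzhysfbut} and the contracting homotopy produced in Section~3. No further calculation is required, since the chain-homotopy identity $d\circ u + u\circ d = \id - u_{-1}\circ\epsilon^{g_1,\ldots,g_n}$ on $M_*^{g_1,\ldots,g_n}$ is exactly what Lemma~\ref{qqnnzzhysfbut} produces, and separating this identity by tensor degree yields the remaining relations of~(\ref{contr4}) in the multi-factor setting.
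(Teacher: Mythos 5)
Your proposal is correct and takes essentially the same route as the paper: apply Lemma~\ref{qqnnzzhysfbut} with $f_i=\id_{M_*^{g_i}}$ and $g_i=s_{-1}^{g_i}\circ\epsilon^{g_i}$, invoke the observation (recalled in the paper just before the corollary, from the proof of Proposition~3.2 in Handel) that a homotopy $1_{F_*}\simeq s_{-1}\circ\varepsilon$ on the deleted complex is equivalent to a contracting homotopy on the augmented one, and kill the signs $(-1)^{d_i}$ by noting that each $g_j$ vanishes in positive degree. This is exactly the argument the paper intends, including the sign remark that appears in the paragraph immediately following the corollary.
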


Note that we have omitted the use of subscripts for chain maps and chain homotopies appearing on the right-hand side of the equality for $u_k(y_1\otimes\cdots\otimes y_n)$. This will also be the case in the expression for $w_k(z_1 \otimes z_2)$ in Corollary~\ref{porfinlascontraccioneseneltensor} below. Note also that the signs ``$(-1)^{d_i}$'' appearing in the formula for $s(\kappa_1\otimes\cdots\otimes\kappa_n)$ in Lemma~\ref{qqnnzzhysfbut} are not needed in Corollary~\ref{porfinlascontracciones}. In fact, the $i$-th summand in the expression for $u_k(y_1\otimes\cdots\otimes y_n)$ is non-zero only when $y_1\otimes\cdots\otimes y_{i-1}$ is of degree zero. Similar considerations yield:

\begin{corollary}\label{porfinlascontraccioneseneltensor}
A contracting homotopy $w=\{w_k\colon k\ge-1\}$ for the (diagonal) $\uppi_{g_1,\ldots,g_n}$-resolution $M_{*}^{g_1,\ldots, g_{n}}\otimes M_{*}^{g_1,\ldots, g_{n}}\to\mathbb{Z}$ is determined by additive morphisms $w_{-1}:\z\to M_0^{g_1,\ldots, g_n}\otimes M_0^{g_1,\ldots, g_n}$ and $w_{k}:\left(M_*^{g_1,\ldots, g_n}\otimes M_*^{g_1,\ldots, g_n}\right)_k\to \left(M_*^{g_1,\ldots, g_n}\otimes M_*^{g_1,\ldots, g_n}\right)_{k+1}$, $k\ge0$, given by $w_{-1}(1)=\left(\upchi^{g_1}\otimes\cdots\otimes\upchi^{g_n}\right)\otimes\left(\upchi^{g_1}\otimes\cdots\otimes\upchi^{g_n}\right)$ and, for $k\ge0$, $$w_k(z_1 \otimes z_2) = u(z_1)\otimes z_2+(u_{-1}\circ\epsilon)(z_1)\otimes u(z_2).$$
\end{corollary}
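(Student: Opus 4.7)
The plan is to derive Corollary~\ref{porfinlascontraccioneseneltensor} as a direct application of Lemma~\ref{qqnnzzhysfbut} (with $n=2$), using the same augmentation/contracting-homotopy correspondence that was already exploited in the paragraph preceding Corollary~\ref{porfinlascontracciones}. Specifically, the homotopy $u$ produced by Corollary~\ref{porfinlascontracciones} corresponds, under that correspondence, to a chain homotopy $u\colon \mathrm{id}_{M_*^{g_1,\ldots,g_n}}\simeq s_{-1}\circ\epsilon$ between endomorphisms of the deleted complex $M_*^{g_1,\ldots,g_n}$ (where $\epsilon$ and $s_{-1}$ here denote the tensor maps $\epsilon^{g_1}\otimes\cdots\otimes\epsilon^{g_n}$ and $s_{-1}^{g_1}\otimes\cdots\otimes s_{-1}^{g_n}$, respectively). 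The goal is to promote this to a chain homotopy between the corresponding endomorphisms of the tensor square $M_*^{g_1,\ldots,g_n}\otimes M_*^{g_1,\ldots,g_n}$, and then to read off the resulting contracting homotopy of the augmented tensor-square resolution.

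First I would apply Lemma~\ref{qqnnzzhysfbut} with $n=2$, taking $(K_1,\delta_1)=(K_2,\delta_2)=M_*^{g_1,\ldots,g_n}$, $f_1=f_2=\mathrm{id}$, $g_1=g_2=s_{-1}\circ\epsilon$, and $s_1=s_2=u$. The lemma then yields a chain homotopy
\[
s(z_1\otimes z_2)=u(z_1)\otimes z_2+(-1)^{d_1}(s_{-1}\circ\epsilon)(z_1)\otimes u(z_2)
\]
from the identity of $M_*^{g_1,\ldots,g_n}\otimes M_*^{g_1,\ldots,g_n}$ to $(s_{-1}\circ\epsilon)\otimes(s_{-1}\circ\epsilon)$. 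The latter composite equals $(s_{-1}\otimes s_{-1})\circ(\epsilon\otimes\epsilon)$, which is precisely the ``$s_{-1}\circ\epsilon$'' map associated with the augmentation $\epsilon\otimes\epsilon\colon M_*^{g_1,\ldots,g_n}\otimes M_*^{g_1,\ldots,g_n}\to\mathbb{Z}$ and the splitting $s_{-1}\otimes s_{-1}$.

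Second, I would invoke the correspondence recalled just before Corollary~\ref{porfinlascontracciones} (cf.\ the proof of~\cite[Proposition~3.2]{MR1200878}): a homotopy $\mathrm{id}\simeq s_{-1}\circ\epsilon$ on a deleted resolution is equivalent data to a contracting homotopy on the augmented resolution, with the $w_{-1}$ component dictated by the chosen splitting. Applied to the tensor square, this gives $w_{-1}(1)=(s_{-1}\otimes s_{-1})(1\otimes 1)=(\upchi^{g_1}\otimes\cdots\otimes\upchi^{g_n})\otimes(\upchi^{g_1}\otimes\cdots\otimes\upchi^{g_n})$, and $w_k=s$ in positive degrees.

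Finally, I would dispense with the sign $(-1)^{d_1}$, by exactly the argument already used in the remark following Corollary~\ref{porfinlascontracciones}: the second summand $(s_{-1}\circ\epsilon)(z_1)\otimes u(z_2)$ vanishes unless $z_1$ is of degree zero, in which case $d_1=0$ and $(-1)^{d_1}=1$. Substituting the abbreviations $u(z_1)$ and $(u_{-1}\circ\epsilon)(z_1)$ for $u(z_1)$ and $(s_{-1}\circ\epsilon)(z_1)$ (adopting the convention mentioned in the paper that $u_{-1}=s_{-1}$ on the tensor resolution) recovers the displayed formula $w_k(z_1\otimes z_2)=u(z_1)\otimes z_2+(u_{-1}\circ\epsilon)(z_1)\otimes u(z_2)$. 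The main ``obstacle'' is purely bookkeeping: keeping straight the two roles of $s_{-1}\circ\epsilon$ (as the target of a homotopy on the deleted complex versus as the defining datum $u_{-1}$ of the contracting homotopy on the augmented complex) and verifying that the sign convention in Lemma~\ref{qqnnzzhysfbut} harmlessly disappears in our instance; no further computation is required.
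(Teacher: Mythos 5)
Your argument is correct and follows exactly the route the paper intends when it writes ``Similar considerations yield:'' after Corollary~\ref{porfinlascontracciones} --- namely, apply Lemma~\ref{qqnnzzhysfbut} with two tensor factors (both equal to $M_*^{g_1,\ldots,g_n}$, with the chain homotopy $u$ from Corollary~\ref{porfinlascontracciones}), convert the resulting homotopy $\mathrm{id}\simeq(s_{-1}\circ\epsilon)\otimes(s_{-1}\circ\epsilon)$ on the deleted tensor-square complex back into a contracting homotopy on the augmented resolution via the correspondence from~\cite[Proposition~3.2]{MR1200878}, and observe that the sign $(-1)^{|z_1|}$ is immaterial because $(s_{-1}\circ\epsilon)(z_1)$ vanishes unless $z_1$ has degree zero. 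The only blemish is notational: in the Lemma's convention the sign attached to the second summand is $(-1)^{d_2}$ (with $d_2=|z_1|$), not $(-1)^{d_1}$; this does not affect the substance of your argument since you correctly identify the relevant exponent as the degree of $z_1$.
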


\section{Multiplicative structure in $H^*(\uppi_g;\widetilde{\mathbb{Z}})$}\label{seccionunfactor}
In this section we deal with $\uppi_g$-modules $\widetilde{\mathbb{Z}}$ whose underlying additive structure is~$\mathbb{Z}$. A generator $a_i$ or $b_i$ of $\uppi_g$ acts as multiplication by either 1 or $-1$ in any such $\widetilde{\mathbb{Z}}$. Therefore, $\widetilde{\mathbb{Z}}$ is completely characterized by specifying the subset $S\subseteq\{a_i,b_i\colon 1\le i\le g\}$ of generators that act by multiplication by $-1$. Since the defining relation in~(\ref{presentacion}) is a product of commutators, any subset $S$ is possible, and we use the more specific notation $\mathbb{Z}_S$ (instead of $\widetilde{\mathbb{Z}}$). For instance, $\mathbb{Z}_\varnothing$ stands for the trivial $\uppi_g$-module $\mathbb{Z}$. Note that $\mathbb{Z}_{S_1}\otimes\mathbb{Z}_{S_2}=\mathbb{Z}_{S_1\ominus S_2}$, where the tensor product ---taken over the integers--- is seen as a diagonal $\uppi_g$-module, and $S_1\ominus S_2:=S_1\cup S_2-S_1\cap S_2$ is the symmetric difference of $S_1$ and~$S_2$.

\medskip
Cup product maps
\begin{equation}\label{productosmezclados}
H^*(\uppi_g;\mathbb{Z}_{S_1})\otimes H^*(\uppi_g;\mathbb{Z}_{S_2})\to H^*(\uppi_g;\mathbb{Z}_{S_1\ominus S_2})
\end{equation}
with either $S_1=S_2$ or with some $S_i$ being empty are described (in a slightly indirect way) in~\cite[Theorem~3.5]{MR3386228} by taking advantage of the fact that any two non-trivial $\uppi_g$-modules $\widetilde{\mathbb{Z}}$ are isomorphic. In this section we work out the general form of~(\ref{productosmezclados}).

\medskip
Routine calculations based on Corollary~\ref{alalyvyvnene} and on the case $n=1$ in Corollary~\ref{porfinlascontraccioneseneltensor} yield the following description of a diagonal approximation $M_*^g\to M_*^g\otimes M_*^g$:

\begin{proposition}\label{exa1}
A diagonal approximation $\Delta=\Delta_*^g\colon M_*^g\to M_*^g\otimes M_*^g$ is given by
$\Delta(\upchi)=\upchi\otimes\upchi$, $\Delta(\upalpha_i)=\upalpha_i\otimes a_i\upchi+\upchi\otimes\upalpha_i$, $\,\Delta(\upbeta_i)=\upbeta_i\otimes b_i\upchi+\upchi\otimes\upbeta_i$ and
\begin{align*}
\Delta(\upomega)=&\sum_{i=1}^{g-1}\left( \sum_{j=1}^{i}P_{j-1}\left((1-c_jb_j)\upalpha_j+(a_j-c_j)\upbeta_j \rule{0mm}{5mm}\right)\!\right) \!\otimes P_{i}\left( \upalpha_{i+1}-b_i\upalpha_i+a_{i+1}\upbeta_{i+1}-\upbeta_i \rule{0mm}{4mm} \right) \\
&+\sum_{i=1}^{g}\left( P_{i-1}\upalpha_i \otimes P_{i-1}a_i\upbeta_i - P_i\upbeta_i\otimes P_i b_i \upalpha_i\rule{0mm}{5mm}\right) +\upchi\otimes\upomega+\upomega\otimes b_ga_g\upchi.
\end{align*}
\end{proposition}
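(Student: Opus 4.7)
The plan is to apply Proposition~\ref{alalyvyvnene} to the resolution $M_*^g$, using for $M_*^g\otimes M_*^g$ the contracting homotopy $w$ obtained from Corollary~\ref{porfinlascontraccioneseneltensor} in the single-factor case $n=1$; namely
$$w_k(z_1\otimes z_2)=s(z_1)\otimes z_2+(s_{-1}\epsilon)(z_1)\otimes s(z_2),$$
where $s=s_*^g$ is the contracting homotopy spelled out in Section~3. The diagonal approximation is then determined inductively by $\Delta_0(\upchi)=\upchi\otimes\upchi$ and, for $q=1,2$ and each basis element $b_q$, by $\Delta_q(b_q)=w_{q-1}\circ\Delta_{q-1}\circ d_q(b_q)$.

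The degree-one calculations are immediate: from $\Delta_1(\upalpha_i)=w_0(\Delta_0(d_1(\upalpha_i)))=w_0(a_i\upchi\otimes a_i\upchi-\upchi\otimes\upchi)$, unwinding the definitions of $w_0$, $s_{-1}$, $s_0$ together with $\epsilon(a_i\upchi)=1$ yields $\upalpha_i\otimes a_i\upchi+\upchi\otimes\upalpha_i$. The formula for $\Delta_1(\upbeta_i)$ is formally identical.

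For $\Delta_2(\upomega)$ I would start from the explicit expansion of $d_2(\upomega)$ in~(\ref{d2}), apply $\Delta_1$ $\uppi_g$-equivariantly to each summand (producing pairs of tensors of the form $y\uplambda\otimes yz\upchi$ and $y\upchi\otimes y\uplambda$), and then evaluate $w_1$ term by term. Whenever $\uplambda$ is not $\upbeta_g$, the map $s_1$ vanishes on $y\uplambda$, so $w_1(y\uplambda\otimes yz\upchi)=0$ and $w_1(y\upchi\otimes y\uplambda)=s_0(y\upchi)\otimes y\uplambda$; the required $s_0$-expansions are supplied by the total-Fox-derivative formula~(\ref{dtfgeneral}), and the resulting pieces collate (after telescoping) to give the first double sum of the statement together with the summand $\sum_i\left(P_{i-1}\upalpha_i\otimes P_{i-1}a_i\upbeta_i-P_i\upbeta_i\otimes P_ib_i\upalpha_i\right)$.

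The main obstacle is the $\upbeta_g$ contribution, coming from the summand $(P_{g-1}a_g-1)\upbeta_g$ of $d_2(\upomega)$, where $s_1$ can be non-trivial. Because the normal form of $P_{g-1}a_g$ ends in $a_g=T_0$ but not in $T_1=a_gP_{g-1}a_g$, the definition of $s_1$ gives $s_1(P_{g-1}a_g\,\upbeta_g)=P_{g-1}a_gU\upomega=\upomega$ (using the group-ring identity $P_{g-1}a_gU=1$ in $\uppi_g$), while $s_1(\upbeta_g)=0$ as the empty word ends like no $T_n$ or $U^m$. Applying $w_1$ to all four tensor summands produced by $\Delta_1\bigl((P_{g-1}a_g-1)\upbeta_g\bigr)$, and reducing $P_{g-1}a_gb_g$ to its normal form $b_ga_g$ via rule~(\ref{R5}), contributes exactly $\upchi\otimes\upomega+\upomega\otimes b_ga_g\upchi$ to $\Delta_2(\upomega)$. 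The remaining work is the patient, but purely computational, consolidation of the Fox-derivative expansions into the telescoped form displayed in the proposition.
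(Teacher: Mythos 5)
Your proposal is correct and follows essentially the same route the paper itself indicates: apply Proposition~\ref{alalyvyvnene} using the $n=1$ case of the contracting homotopy from Corollary~\ref{porfinlascontraccioneseneltensor}. The paper labels the resulting computation ``routine'' and omits it; your sketch supplies the decisive steps, in particular the identification of the only non-vanishing $s_1$-contribution $s_1(P_{g-1}a_g\,\upbeta_g)=P_{g-1}a_gU\,\upomega=\upomega$ (since $N(P_{g-1}a_g)$ ends like $T_0=a_g$ but not like $T_1$ nor like any $U^m$) and the reduction $P_{g-1}a_gb_g=b_ga_g$, which together account for the terms $\upchi\otimes\upomega$ and $\upomega\otimes b_ga_g\upchi$; the remaining $s_0$-telescoping, including the $s_0(P_{g-1}a_g\upchi)\otimes P_{g-1}a_g\upbeta_g$ piece that feeds the $i=g$ term of the second sum, is as you indicate a mechanical consolidation.
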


Each commutator $c_i=[a_i,b_i]$ acts trivially on any $\mathbb{Z}_{S}$. Therefore, for the purposes of applying the functor $\Hom_{\uppi_g}(-;\mathbb{Z}_{S})$, the differential~(\ref{d2}) and the expression above for $\Delta(\upomega)$ can be taken to be, respectively, $d_2(\upomega)=\sum_{i=1}^g\left(\rule{0mm}{4mm}(1-b_i)\upalpha_i+(a_i-1)\upbeta_i\right)$ and

\begin{align}
\Delta(\upomega)&=\sum_{i=1}^{g-1} \sum_{j=1}^i\left((1-b_j)\upalpha_j+(a_j-1)\upbeta_j \rule{0mm}{5mm}\right) \!\otimes \left(\upalpha_{i+1}-b_i\upalpha_i+a_{i+1}\upbeta_{i+1}-\upbeta_i \rule{0mm}{4mm}\right)\nonumber\\
&\;\;\;\,+\sum_{i=1}^g \left( \rule{0mm}{4mm}\upalpha_i\otimes a_i\upbeta_i-\upbeta_i\otimes b_i\upalpha_i\right)+\upchi\otimes\upomega+\upomega\otimes b_ga_g\upchi.\label{diagonalita}
\end{align}

The description of the differential $d_*$ in Proposition~\ref{spelledout} together with the above considerations show that the cochain complex $M^*_g:=\Hom_{\uppi_g}(M^g_*,\mathbb{Z}_{S})$ has a graded $\mathbb{Z}$-basis consisting of the elements $\upchi^*$ in dimension 0, $\upalpha_i^*$ and $\upbeta_i^*$ ($1\leq i\leq g$) in dimension 1, and $\upomega^*$ in dimension 2, with (co)differentials
\begin{equation}\label{ytrcnvmbn}
d_1^*(\upchi^*)=2\upsigma^*,\qquad d_2^*(\upalpha_i^*)=\begin{cases}2\upomega^*, & b_i\in S,\\0, & b_i\not\in S,\end{cases}
\qquad d_2^*(\upbeta_i^*)=\begin{cases}-2\upomega^*, & a_i\in S,\\0, & a_i\not\in S,\end{cases}
\end{equation}
where $\upsigma^*$ is the sum of the duals of the greek-letter versions of the elements in $S$:
$$
\upsigma^*=\sum_{a_i\in S}\upalpha^*_i+\sum_{b_i\in S}\upbeta^*_i.
$$
Note that, as in Lemma~\ref{qqnnzzhysfbut}, we follow standard sign conventions (see~\cite[Eq.~1.6, p.~57]{MR672956}) in the first equality of~(\ref{ytrcnvmbn}).

\begin{example}\label{hhaayyttsmmmldksju}
The above considerations recover the usual fact that $H^*(\uppi_g;\mathbb{Z}_\varnothing)$ is torsion free with $\mathbb{Z}$-basis given by (the classes of\hspace{.3mm}) $\upchi^*$ in dimension zero, $\upalpha^*_i$, $\upbeta^*_i$ ($1\leq i\leq g$) in dimension~1, and $\upomega^*$ in dimension~2, and that the only non-zero cup products involving 1-dimensional classes are $$\upbeta^*_i\smile \upalpha^*_i=\upomega^*=-\upalpha^*_i\smile \upbeta^*_i$$ for $1\leq i\leq g$. More generally, if $M$ is a trivial $\uppi_g$-module, and $u,v\in\Hom_{\uppi_g}(M^g_1,M)$ are 1-cocycles, the cup product $[u]\smile[v]\in H^2(\uppi_g;M\otimes M)$ is represented by the cocycle
$$
\upomega\mapsto\sum_{i=1}^g\left(u(\upbeta_i)\otimes v(\upalpha_i)-u(\upalpha_i)\otimes v(\upbeta_i)\rule{0mm}{4mm}\right).
$$
The latter assertion is of course~\cite[Corollary~2.3]{MR3386228}, except that here we use the standard sign convention in the definition of cup products (see the last paragraph of Chapter V.2, and the third paragraph of Chapter V.3 in~\cite{MR672956}).
\end{example}

The analysis of the general case in~(\ref{productosmezclados}) is just as straightforward as that in Example~\ref{hhaayyttsmmmldksju}. We start by describing explicit cocycles generating each group $H^*(\uppi_g;\mathbb{Z}_S)$. Throughout the rest of the section, and unless it is explicitly noted otherwise, we assume $S\neq\varnothing$. 

\medskip An immediate consequence of~(\ref{ytrcnvmbn}) is:
\begin{corollary}\label{muyinmediato}
For $S\neq\varnothing$, $H^0(\uppi_g;\mathbb{Z}_S)=0$ while $H^2(\uppi_g;\mathbb{Z}_S)=\upomega^*\cdot\mathbb{Z}_2$, the group with two elements generated by (the cohomology class of) $\upomega^*$.
\end{corollary}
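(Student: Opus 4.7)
The plan is to read off both statements directly from the cochain complex description given in~(\ref{ytrcnvmbn}), using in a crucial way the hypothesis $S\neq\varnothing$.

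For the $H^0$ assertion, I would note that $M^0_g=\Hom_{\uppi_g}(M^g_0,\mathbb{Z}_S)$ is the free abelian group on $\upchi^*$, while $M^1_g$ is the free abelian group of rank $2g$ on the $\upalpha_i^*$, $\upbeta_i^*$. Since $S\neq\varnothing$, at least one of the generators $a_i$ or $b_i$ belongs to $S$, and the corresponding term in
$$
\upsigma^*=\sum_{a_i\in S}\upalpha^*_i+\sum_{b_i\in S}\upbeta^*_i
$$
is a non-zero basis element of $M^1_g$, so $\upsigma^*\neq 0$ and hence $d_1^*(\upchi^*)=2\upsigma^*\neq 0$ in the torsion-free group $M^1_g$. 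Therefore $d_1^*\colon M^0_g\to M^1_g$ is injective, which gives $H^0(\uppi_g;\mathbb{Z}_S)=\ker(d_1^*)=0$.

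For the $H^2$ assertion, I would observe that $M^2_g=\mathbb{Z}\cdot\upomega^*$, so it suffices to identify the image of $d_2^*\colon M^1_g\to M^2_g$. From~(\ref{ytrcnvmbn}), every value $d_2^*(\upalpha_i^*)$ or $d_2^*(\upbeta_i^*)$ is a (possibly zero) even multiple of $\upomega^*$, so $\mathrm{Im}(d_2^*)\subseteq 2\mathbb{Z}\cdot\upomega^*$. Conversely, since $S\neq\varnothing$, we may pick a generator in $S$: if $b_i\in S$ for some $i$, then $d_2^*(\upalpha_i^*)=2\upomega^*$, and if instead $a_i\in S$ for some $i$, then $d_2^*(\upbeta_i^*)=-2\upomega^*$. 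In either case, $2\upomega^*\in\mathrm{Im}(d_2^*)$, so $\mathrm{Im}(d_2^*)=2\mathbb{Z}\cdot\upomega^*$ and hence $H^2(\uppi_g;\mathbb{Z}_S)=M^2_g/\mathrm{Im}(d_2^*)=\upomega^*\cdot\mathbb{Z}_2$.

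There is no real obstacle; the argument is an immediate and purely bookkeeping consequence of the (co)differential formulas already recorded in~(\ref{ytrcnvmbn}), with the non-emptiness of $S$ entering in precisely two places: to guarantee that $d_1^*$ is injective on $M^0_g$, and to guarantee that at least one generator of $M^1_g$ has image $\pm 2\upomega^*$ under $d_2^*$.
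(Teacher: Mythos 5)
Your proof is correct and follows exactly the route the paper has in mind: the paper labels the corollary an ``immediate consequence'' of the codifferential formulas in~(\ref{ytrcnvmbn}), and your argument is precisely the bookkeeping that cashes this out, reading off injectivity of $d_1^*$ (hence $H^0=0$) and $\mathrm{Im}(d_2^*)=2\mathbb{Z}\cdot\upomega^*$ (hence $H^2=\mathbb{Z}_2$) from the nonemptiness of $S$.
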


Slightly more elaborate is to spell out (in Corollaries~\ref{corodefix1} and~\ref{corodefix} below) the group structure of $H^1(\uppi_g;\mathbb{Z}_S)$. Start by considering the partition of $\{1,\ldots,g\}$ by the sets
\begin{align*}
Y&=\{ i\in\{1,\ldots,g\}\colon a_i\in S\mbox{ and }b_i\in S\},\\
A&=\{ i\in\{1,\ldots,g\}\colon a_i\in S\mbox{ and }b_i\not\in S\},\\
B&=\{ i\in\{1,\ldots,g\}\colon a_i\not\in S\mbox{ and }b_i\in S\},\\
N&=\{ i\in\{1,\ldots,g\}\colon a_i\not\in S\mbox{ and }b_i\not\in S\}.
\end{align*}
We need to chose a ``pivot'' in $S$.

\medskip\noindent
{\bf Case A}: There is a generator $a_{i_0}\in S$. Consider the elements $\upsigma^*_i, \bar{\upalpha}^*_i,\bar{\upbeta}^*_i\in\ker(d_2^*)$ ($1\leq i\leq g$) defined through the table:
\begin{center}
  \begin{tabular}{ | c| c | c | c | }
    \hline
    \rule{0mm}{5mm} \raisebox{.3mm}{$i\,$ belongs to} & \raisebox{.5mm}{$\upsigma^*_i$} & \raisebox{.5mm}{$\bar{\upalpha}^*_i$} & \raisebox{.5mm}{$\bar{\upbeta}^*_i$} \\ \hline
    \rule{0mm}{5mm}$Y$ & \raisebox{.7mm}{$\upalpha^*_i+\upbeta^*_i$} & \raisebox{.3mm}{$0$} & \raisebox{.7mm}{$\upbeta^*_i-\upbeta^*_{i_0}$} \\ \hline
    \rule{0mm}{5mm}$A$ & \raisebox{.7mm}{$\upalpha^*_i$} & \raisebox{.3mm}{$0$} & \raisebox{.7mm}{$\upbeta^*_i-\upbeta^*_{i_0}$} \\ \hline
    \rule{0mm}{5mm}$B$ & \raisebox{.7mm}{$\upbeta^*_i$} & \raisebox{.7mm}{$\upalpha^*_i+\upbeta^*_{i_0}$} & \raisebox{.3mm}{$0$} \\ \hline
    \rule{0mm}{5mm}$N$ & \raisebox{.3mm}{$0$} & \raisebox{.7mm}{$\upalpha^*_i$} & \raisebox{.7mm}{$\upbeta^*_i$} \\ \hline
  \end{tabular}
\end{center}
Note that $\upsigma^*=\sum_i\upsigma^*_i$ and $\bar{\upbeta}^*_{i_0}=0$ ($i_0\in Y\cup A$). A direct linear-algebra calculation yields:

\begin{corollary}\label{corodefix1}
Fix a generator $a_{i_0}\in S$ (if there is any) and let $H_{\upsigma}$, $H_{\upalpha}$ and $H_{\upbeta}$ denote the free abelian groups with bases given by the (cohomology classes of the) elements indicated in the table: 
\begin{center}
  \begin{tabular}{ | c| c | c | c | }
    \hline
    \rule{0mm}{5mm}\raisebox{.7mm}{group} & \raisebox{.3mm}{$H_{\upsigma}$} & \raisebox{.3mm}{$H_{\upalpha}$} & \raisebox{.3mm}{$H_{\upbeta}$} \\ \hline
    \rule{0mm}{5mm}\raisebox{.5mm}{basis} & \raisebox{.5mm}{$\{\upsigma^*_i\colon i\in Y\cup A\cup B$\}} & \raisebox{.5mm}{$\{\bar{\upalpha}^*_i\colon i\in B\cup N$\}} & \raisebox{.5mm}{$\{\bar{\upbeta}^*_i\colon i\in Y\cup A\cup N-\{i_0\}$\}} \\ \hline  \end{tabular}
\end{center}
Then $H^1(\uppi_g;\mathbb{Z}_S)=H_{\upsigma}/2D\oplus H_{\upalpha}\oplus H_{\upbeta}\cong\mathbb{Z}^{2g-2}\oplus\mathbb{Z}_2$, where $D$ stands for the ``diagonal'' subgroup of $H_{\upsigma}$ generated by $\upsigma^*$. The element of $2$-torsion is (the cohomology class of) $\upsigma^*$, while a basis for the summand $\mathbb{Z}^{2g-2}$ is given by the (cohomology classes of the) cocycles
\begin{itemize}
\item $\upsigma^*_i$ with $i\in Y\cup A\cup B-\{i_0\}$;
\item $\bar{\upalpha}^*_i$ with $i\in B\cup N$;
\item $\bar{\upbeta}^*_i$ with $i\in Y\cup A\cup N-\{i_0\}$.
\end{itemize}
\end{corollary}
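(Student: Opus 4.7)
The plan is to compute $H^1(\uppi_g;\mathbb{Z}_S)=\ker(d_2^*)/\mathrm{im}(d_1^*)$ directly from the codifferential formulas in~(\ref{ytrcnvmbn}) and then repackage the answer into the stated decomposition. Since all ingredients are free abelian groups of finite rank, the argument reduces to elementary linear algebra over $\mathbb{Z}$; the only real work is bookkeeping with respect to the four blocks $Y, A, B, N$ and the pivot $i_0$.

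\smallskip\noindent\textbf{Cocycle verification.} I would first check that each $\upsigma^*_i$, $\bar{\upalpha}^*_i$, $\bar{\upbeta}^*_i$ in the table lies in $\ker(d_2^*)$. For each of the twelve entries this is a one-line application of~(\ref{ytrcnvmbn}) using the defining conditions on the blocks. For instance, if $i\in B$ then $b_i\in S$ while $a_i\notin S$, and since $a_{i_0}\in S$, one computes $d_2^*(\bar{\upalpha}^*_i)=d_2^*(\upalpha^*_i)+d_2^*(\upbeta^*_{i_0})=2\upomega^*-2\upomega^*=0$; the remaining cases are analogous.

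\smallskip\noindent\textbf{A $\mathbb{Z}$-basis of $\ker(d_2^*)$.} The cochain group $\Hom_{\uppi_g}(M_1^g,\mathbb{Z}_S)$ is free abelian of rank $2g$ on $\{\upalpha^*_i,\upbeta^*_i\}$. I would show that the $2g-1$ elements listed in the corollary, together with the single extra element $\upbeta^*_{i_0}$, form a second $\mathbb{Z}$-basis, via a triangular change of basis organized block-by-block: within each block the old pair $(\upalpha^*_i,\upbeta^*_i)$ is recoverable from the proposed new generators (with $\upbeta^*_{i_0}$ playing an auxiliary role when $i\in Y\cup A\cup B\setminus\{i_0\}$), so the transition matrix is triangular with $\pm1$ on the diagonal. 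Since $d_2^*(\upbeta^*_{i_0})=-2\upomega^*\neq 0$ in the torsion-free group $\Hom_{\uppi_g}(M^g_2,\mathbb{Z}_S)=\mathbb{Z}\cdot\upomega^*$, while the other new basis vectors lie in the kernel by the previous step, $\ker(d_2^*)$ is freely generated by precisely those $2g-1$ elements. This identifies $\ker(d_2^*)=H_{\upsigma}\oplus H_{\upalpha}\oplus H_{\upbeta}$.

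\smallskip\noindent\textbf{The quotient.} From~(\ref{ytrcnvmbn}) the image $\mathrm{im}(d_1^*)$ is the infinite cyclic group generated by $2\upsigma^*=2\sum_{i\in Y\cup A\cup B}\upsigma^*_i$, which lies entirely inside the summand $H_{\upsigma}$. The quotient therefore splits as $H_{\upsigma}/2D\oplus H_{\upalpha}\oplus H_{\upbeta}$, and a unimodular change of basis on $H_{\upsigma}$ extending $\upsigma^*$ to a basis yields $H_{\upsigma}/2D\cong\mathbb{Z}_2\oplus\mathbb{Z}^{|Y\cup A\cup B|-1}$, with the $2$-torsion generated by the class of $\upsigma^*$. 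Adding ranks, $(|Y\cup A\cup B|-1)+|B\cup N|+(|Y\cup A\cup N|-1)=2g-2$, giving $H^1(\uppi_g;\mathbb{Z}_S)\cong\mathbb{Z}^{2g-2}\oplus\mathbb{Z}_2$ as asserted. The main obstacle is the triangular change of basis, where each of the four blocks $Y, A, B, N$ requires slightly different bookkeeping to express $\upalpha^*_i, \upbeta^*_i$ in terms of the new generators; everything else is direct evaluation of the codifferentials.
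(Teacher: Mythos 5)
Your proposal is correct and fills in, with the right bookkeeping, exactly the ``direct linear-algebra calculation'' that the paper invokes without spelling out: verify the table entries are cocycles, complete them with $\upbeta^*_{i_0}$ to a $\mathbb{Z}$-basis of $\Hom_{\uppi_g}(M_1^g,\mathbb{Z}_S)$ via a unimodular change of basis, observe $\ker(d_2^*)$ is freely generated by the $2g-1$ listed cocycles, and quotient by $\mathrm{im}(d_1^*)=2D$. One small caveat on wording: the transition matrix is block-unimodular rather than literally triangular (the $i\in B$ block is a transposition plus a shift by $\upbeta^*_{i_0}$), but the conclusion that it has determinant $\pm1$ is right, so the argument stands.
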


\medskip\noindent
{\bf Case B}: There is a generator $b_{i_0}\in S$. Consider the elements $\upsigma^*_i, \bar{\upalpha}^*_i,\bar{\upbeta}^*_i\in\ker(d_2^*)$ ($1\leq i\leq g$) defined through the table:
\begin{center}
  \begin{tabular}{ | c| c | c | c | }
    \hline
    \rule{0mm}{5mm} \raisebox{.3mm}{$i\,$ belongs to} & \raisebox{.5mm}{$\upsigma^*_i$} & \raisebox{.5mm}{$\bar{\upalpha}^*_i$} & \raisebox{.5mm}{$\bar{\upbeta}^*_i$} \\ \hline
    \rule{0mm}{5mm}$Y$ & \raisebox{.7mm}{$\upalpha^*_i+\upbeta^*_i$} & \raisebox{.7mm}{$\upalpha^*_i-\upalpha^*_{i_0}$} & \raisebox{.3mm}{$0$} \\ \hline
    \rule{0mm}{5mm}$B$ & \raisebox{.7mm}{$\upbeta^*_i$} & \raisebox{.7mm}{$\upalpha^*_i-\upalpha^*_{i_0}$} & \raisebox{.3mm}{$0$} \\ \hline
    \rule{0mm}{5mm}$A$ & \raisebox{.7mm}{$\upalpha^*_i$} & \raisebox{.3mm}{$0$} & \raisebox{.7mm}{$\upbeta^*_i+\upalpha^*_{i_0}$} \\ \hline
    \rule{0mm}{5mm}$N$ & \raisebox{.3mm}{$0$} & \raisebox{.7mm}{$\upalpha^*_i$} & \raisebox{.7mm}{$\upbeta^*_i$} \\ \hline
  \end{tabular}
\end{center}
Again $\upsigma^*=\sum_i\upsigma^*_i$, but now $\bar{\upalpha}^*_{i_0}=0$ ($i_0\in Y\cup B$).

\begin{corollary}\label{corodefix}
Fix a generator $b_{i_0}\in S$ (if there is any) and let $H_{\upsigma}$, $H_{\upalpha}$ and $H_{\upbeta}$ denote the free abelian groups with bases given by the (cohomology classes of the) elements indicated in the table:
\begin{center}
  \begin{tabular}{ | c| c | c | c | }
    \hline
    \rule{0mm}{5mm}\raisebox{.7mm}{group} & \raisebox{.3mm}{$H_{\upsigma}$} & \raisebox{.3mm}{$H_{\upalpha}$} & \raisebox{.3mm}{$H_{\upbeta}$} \\ \hline
    \rule{0mm}{5mm}\raisebox{.5mm}{basis} & \raisebox{.5mm}{$\{\upsigma^*_i\colon i\in Y\cup A\cup B$\}} & \raisebox{.5mm}{$\{\bar{\upalpha}^*_i\colon i\in Y\cup B\cup N-\{i_0\}$\}} & \raisebox{.5mm}{$\{\bar{\upbeta}^*_i\colon i\in A\cup N$\}} \\ \hline  \end{tabular}
\end{center}
Then $H^1(\uppi_g;\mathbb{Z}_S)=H_{\upsigma}/2D\oplus H_{\upalpha}\oplus H_{\upbeta}\cong\mathbb{Z}^{2g-2}\oplus\mathbb{Z}_2$, where $D$ stands for the ``diagonal'' subgroup of $H_{\upsigma}$ generated by $\upsigma^*$. The element of $2$-torsion is (the cohomology class of) $\upsigma^*$, while a basis for the summand $\mathbb{Z}^{2g-2}$ is given by the (cohomology classes of the) cocycles
\begin{itemize}
\item $\upsigma_i^*$ with $i\in Y\cup A\cup B-\{i_0\}$;
\item $\bar{\upalpha}_i^*$ with $i\in Y\cup B\cup N-\{i_0\}$;
\item $\bar{\upbeta}_i^*$ with $i\in A\cup N$.
\end{itemize}
\end{corollary}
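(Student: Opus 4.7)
The plan is to imitate the proof of Corollary~\ref{corodefix1} for Case~A \emph{mutatis mutandis}, exploiting the symmetry that interchanges the roles of $a_i$ and $b_i$ as one moves from an $a_{i_0}$-pivot to a $b_{i_0}$-pivot. I will proceed in four steps, all driven by the codifferential formulas~(\ref{ytrcnvmbn}): (i)~verify that each element of the new table lies in $\ker d_2^*$; (ii)~show these cocycles form a $\mathbb{Z}$-basis of $\ker d_2^*$; (iii)~identify $\mathrm{im}\,d_1^*$ as the cyclic subgroup of $H_\upsigma$ generated by $2\upsigma^*$; and (iv)~compute the resulting quotient.

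Step~(i) will be routine: the only non-zero contributions to $d_2^*$ come from $\upalpha_i^*$ with $b_i\in S$ and from $\upbeta_i^*$ with $a_i\in S$, so each row of the table produces either a $\pm2\upomega^*\mp2\upomega^*$ cancellation or a single zero term (the pivot hypothesis $b_{i_0}\in S$ is exactly what makes the shift by $\pm\upalpha_{i_0}^*$ cancel the nontrivial $d_2^*$-images occurring in $\bar\upalpha_i^*$ and $\bar\upbeta_i^*$). For step~(ii) I would take a generic 1-cochain $\sum_i(x_i\upalpha_i^*+y_i\upbeta_i^*)$, note that~(\ref{ytrcnvmbn}) reduces the cocycle condition to
\[
\sum_{i\in Y\cup B}x_i\;=\;\sum_{i\in Y\cup A}y_i,
\]
and then try to write it as $\sum p_i\upsigma_i^*+\sum q_i\bar\upalpha_i^*+\sum r_i\bar\upbeta_i^*$ in the candidate basis. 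The $\upbeta_i^*$-coefficients will immediately read off $p_i=y_i$ on $Y\cup B$ and $r_i=y_i$ on $A\cup N$; the $\upalpha_i^*$-coefficients for $i\neq i_0$ will then force the remaining $p_i$ (on $A$) and all $q_i$; and the displayed cocycle identity is precisely the scalar equation needed for the resulting coefficient of $\upalpha_{i_0}^*$ to come out equal to $x_{i_0}$. This yields existence and uniqueness of the expansion simultaneously, hence the $\mathbb{Z}$-basis statement.

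Step~(iii) will be immediate from the first equation in~(\ref{ytrcnvmbn}). For step~(iv), performing the base change of $H_\upsigma$ that replaces the generator $\upsigma_{i_0}^*$ by $\upsigma^*=\sum_{i\in Y\cup A\cup B}\upsigma_i^*$ will show $H_\upsigma/2D\cong\mathbb{Z}_2\oplus\mathbb{Z}^{|Y|+|A|+|B|-1}$ with $\upsigma^*$ as the $2$-torsion generator; the claimed isomorphism $H^1(\uppi_g;\mathbb{Z}_S)\cong\mathbb{Z}^{2g-2}\oplus\mathbb{Z}_2$ then follows by adding the free ranks of $H_\upalpha$ and $H_\upbeta$, and the explicit basis of the free summand is obtained by dropping $\upsigma_{i_0}^*$ from the $\upsigma_i^*$'s and retaining the other two families intact. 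The main obstacle, such as it is, lies in the bookkeeping of step~(ii) around the pivot: one must carefully distinguish the cases $i_0\in Y$ and $i_0\in B$ (where $\upsigma_{i_0}^*$ equals $\upalpha_{i_0}^*+\upbeta_{i_0}^*$ and $\upbeta_{i_0}^*$, respectively) when verifying that the coefficient of $\upalpha_{i_0}^*$ matches. Since Case~A has been disposed of by the same mechanism, no new conceptual ingredient is required.
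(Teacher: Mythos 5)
Your plan is correct and fills in, in essentially the same spirit, what the paper dismisses for the companion Case A result as ``a direct linear-algebra calculation'' (and then states Case B without further comment). The codifferential data in~(\ref{ytrcnvmbn}) and a rank/count check do the work exactly as you describe, and the pivot-coefficient verification at $\upalpha^*_{i_0}$ goes through in both subcases $i_0\in Y$ and $i_0\in B$.
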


Having described explicit cocycles representing basis elements of $H^*(\uppi_g;\mathbb{Z}_S)$, all that remains to do in order to fully describe the cup-product maps~(\ref{productosmezclados}) is to get a complete formula for the corresponding cochain-level products
\begin{equation}\label{cochainproduct}
\Hom_{\uppi_g}(M_*^g,\mathbb{Z}_{S_1})\otimes\Hom_{\uppi_g}(M_*^g,\mathbb{Z}_{S_2})\to\Hom_{\uppi_g}(M_*^g,\mathbb{Z}_{S_1\ominus S_2}).
\end{equation}
The required formula, spelled out in Lemma~\ref{cochain-level cup-product} below, follows from Proposition~\ref{exa1} and~(\ref{diagonalita}). Here we use the notation $\uplambda$ and $\ell$ to mean $\ell=a_i$ provided $\uplambda=\upalpha_i$, while $\ell=b_i$ provided $\uplambda=\upbeta_i$ ($1\leq i\leq g$).

\begin{lemma}\label{cochain-level cup-product}
The cochain-level cup-product map~(\ref{cochainproduct}) is determined by the relations $\upchi^*\cdot\Theta^*=\Theta^*$ for $\Theta^*\in\{\upchi^*, \uplambda^*, \upomega^*\}$, $\,\uplambda_r^*\cdot\upmu_s^*=\delta(\uplambda_r,\upmu_s)\upomega^*$, \,and
\begin{align*}
\uplambda^*\cdot\upchi^*=&\begin{cases}-\uplambda^*,&\ell\in S_2; \\ \phantom{-}\uplambda^*,&\mbox{otherwise},\end{cases}\\
\upomega^*\cdot\upchi^*=&\begin{cases}\phantom{-}\upomega^*,& a_g,b_g\in S_2 \mbox{ \; or \;} a_g,b_g\not\in S_2;\\-\upomega^*,& \mbox{otherwise},\end{cases}
\end{align*}
where
\begin{align*}
\delta(\upalpha_r,\upalpha_s)=&
\left\{ \begin{matrix*}[l] -2, & r<s\; \mbox{ and \,} b_r\in S_1 \\ \phantom{-}0, & \mbox{otherwise} \end{matrix*} \right\}+
\left\{ \begin{matrix*}[l] \phantom{-}2, & r\leq s<g, \; b_r\in S_1\; \mbox{ and \,} b_s      \in S_2\\ 
                                                     -2, & r\leq s<g, \; b_r\in S_1\; \mbox{ and \,} b_s\not\in S_2\\
                                    \phantom{-}0, & \mbox{otherwise}  \end{matrix*} \right\},\\
\delta(\upalpha_r,\upbeta_s)=&
\left\{ \begin{matrix*}[l]                    -2, & r< s, \; b_r\in S_1\; \mbox{ and \,} a_s      \in S_2\\ 
                                      \phantom{-} 2, & r< s, \; b_r\in S_1\; \mbox{ and \,} a_s\not\in S_2\\
                                    \phantom{-}0, & \mbox{otherwise}  \end{matrix*} \right\}+
\left\{ \begin{matrix*}[l] 2, & r\leq s<g\; \mbox{ and \,} b_r\in S_1 \\ 0, & \mbox{otherwise} \end{matrix*} \right\}\\
&+\left\{ 
\begin{matrix*}[l]            \phantom{-}1, & r=s, \; \mbox{ and \,} a_r \in S_2\\ 
                                                      -1, & r=s, \; \mbox{ and \,} a_r\not\in S_2\\
                                    \phantom{-}0, & \mbox{otherwise}  \end{matrix*}\right\},\\
\delta(\upbeta_r,\upalpha_s)=& 
\left\{ \begin{matrix*}[l] 2, & r<s\; \mbox{ and \,} a_r\in S_1 \\ 0, & \mbox{otherwise} \end{matrix*}\right\}+
\left\{ \begin{matrix*}[l]                    -2, & r\leq s<g, \; a_r\in S_1\; \mbox{ and \,} b_s \in S_2\\ 
                                      \phantom{-} 2, & r\leq s<g, \; a_r\in S_1\; \mbox{ and \,} b_s\not\in S_2\\
                                    \phantom{-}0, & \mbox{otherwise}  \end{matrix*} \right\}\\
&-\left\{ 
\begin{matrix*}[l]            \phantom{-}1, & r=s, \; \mbox{ and \,} b_r \in S_2\\ 
                                                      -1, & r=s, \; \mbox{ and \,} b_r\not\in S_2\\
                                    \phantom{-}0, & \mbox{otherwise}  \end{matrix*}
\right\},\\
\delta(\upbeta_r,\upbeta_s)=& 
\left\{ \begin{matrix*}[l] \phantom{-}2, & r<s, \; a_r\in S_1\; \mbox{ and \,} a_s \in S_2\\ 
                                                     -2, & r<s, \; a_r\in S_1\; \mbox{ and \,} a_s \not\in S_2\\
                                    \phantom{-}0, & \mbox{otherwise}  \end{matrix*} \right\}+
\left\{ \begin{matrix*}[l] -2, & r\leq s<g\; \mbox{ and \,} a_r\in S_1 \\ \phantom{-}0, & \mbox{otherwise} \end{matrix*} \right\}.
\end{align*}
\end{lemma}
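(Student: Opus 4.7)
The plan is to apply the definition of the cochain-level cup product~(\ref{cochainproduct}), namely $(u\cdot v)(x) = (u\otimes v)(\Delta x)$ with the standard Koszul sign convention, where $\Delta=\Delta_*^g$ is the diagonal approximation of Proposition~\ref{exa1}. Since every commutator $c_i$ acts trivially on any module $\mathbb{Z}_S$, the values of $\Delta$ may be replaced, for the purposes of applying $\Hom_{\uppi_g}(-,\mathbb{Z}_{S_1\ominus S_2})$, by the simplified expressions $\Delta(\upchi)=\upchi\otimes\upchi$, $\Delta(\upalpha_i)=\upalpha_i\otimes a_i\upchi+\upchi\otimes\upalpha_i$, $\Delta(\upbeta_i)=\upbeta_i\otimes b_i\upchi+\upchi\otimes\upbeta_i$, together with the formula~(\ref{diagonalita}) for $\Delta(\upomega)$. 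So the entire lemma is a direct, if multi-case, evaluation.

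The relations $\upchi^*\cdot\Theta^*=\Theta^*$ follow immediately: in each of $\Delta(\upchi)$, $\Delta(\uplambda_i)$, $\Delta(\upomega)$, precisely one summand has the form $\upchi\otimes\Theta$ (the first factor being the \emph{un-translated} $\upchi$, so that $\upchi^*$ returns~$1$ with no twist), and that summand contributes $\Theta^*$. The relations $\uplambda^*\cdot\upchi^*$ and $\upomega^*\cdot\upchi^*$ are equally direct: the only relevant summands are $\uplambda\otimes\ell\upchi$ in $\Delta(\uplambda)$ (where $\ell$ is $a_i$ or $b_i$ according as $\uplambda=\upalpha_i$ or $\upbeta_i$) and $\upomega\otimes b_ga_g\upchi$ in $\Delta(\upomega)$. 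Evaluating $\upchi^*$ on $\ell\upchi$ (respectively $b_ga_g\upchi$) yields $(-1)^{[\ell\in S_2]}$ (respectively $(-1)^{[a_g\in S_2]+[b_g\in S_2]}$) in $\mathbb{Z}_{S_2}\subset \mathbb{Z}_{S_1\ominus S_2}$, which gives the stated sign.

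For the genuinely computational part $\uplambda_r^*\cdot\upmu_s^* = \delta(\uplambda_r,\upmu_s)\upomega^*$, both $\uplambda_r^*$ and $\upmu_s^*$ have degree~$1$, so evaluation on a basis element of degree~$\leq 1$ is zero, and the entire content is the value at $\upomega$. Among the summands of $\Delta(\upomega)$ in~(\ref{diagonalita}), the last two terms $\upchi\otimes\upomega$ and $\upomega\otimes b_ga_g\upchi$ contribute nothing; the only contributors are the double sum (which forces $j=r$, hence $r\le i\le g-1$, with $s\in\{i,i+1\}$ coming from the four terms of the right-hand factor $\upalpha_{i+1}-b_i\upalpha_i+a_{i+1}\upbeta_{i+1}-\upbeta_i$) and the single sum $\sum_{i=1}^g(\upalpha_i\otimes a_i\upbeta_i-\upbeta_i\otimes b_i\upalpha_i)$ (which contributes only when $r=s$). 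Each matching summand in~(\ref{diagonalita}) has the form $\eta(g_1)\uplambda_r\otimes\eta'(g_2)\upmu_s$ for integer signs $\eta,\eta'$ and group elements $g_1\in\{1,a_r,b_r\}$, $g_2\in\{1,a_s,b_s\}$; the corresponding contribution to $\delta(\uplambda_r,\upmu_s)$ is $-\eta\eta'(-1)^{[g_1\in S_1]+[g_2\in S_2]}$, the leading minus being the Koszul sign $(-1)^{|\upmu_s^*||\uplambda_r|}$. Collecting these signed contributions over the four case-pairs $(\uplambda,\upmu)\in\{\upalpha,\upbeta\}^2$ and over the finitely many valid indices yields the three (or two) brackets comprising each $\delta$ formula, with the constraints ``$r<s$'' versus ``$r\leq s<g$'' arising respectively from the $s=i+1$ and $s=i$ branches of the double sum.

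The main obstacle is purely clerical: one must keep straight, simultaneously, (i) the explicit coefficients (including their signs) of $\upalpha_j,\upbeta_j,\upalpha_{i+1},\upbeta_{i+1}$ in the two tensor factors of the double-sum summands of~(\ref{diagonalita}), (ii) the twisted actions of $a_r,b_r$ on $\mathbb{Z}_{S_1}$ and $a_s,b_s$ on $\mathbb{Z}_{S_2}$, which conspire to land in $\mathbb{Z}_{S_1\ominus S_2}$, and (iii) the Koszul sign $(-1)^{1\cdot 1}=-1$ from swapping a degree-$1$ cochain past a degree-$1$ chain. The critical consistency check is at $r=s$ in $\delta(\upalpha_r,\upbeta_s)$ and $\delta(\upbeta_r,\upalpha_s)$, where the single sum contributes the last bracket ($\pm 1$ according to whether $a_r$ or $b_r$ lies in $S_2$) and must be combined with the double-sum contributions without sign cancellation; in particular, the untwisted specialization $S_1=S_2=\varnothing$ must recover $\upalpha_r^*\smile\upbeta_r^* = -\upomega^*$ from Example~\ref{hhaayyttsmmmldksju}, which serves as a reliable sanity anchor throughout.
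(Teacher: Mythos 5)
Your proposal is correct and is exactly the argument the paper has in mind: the authors state only that the lemma ``follows from Proposition~\ref{exa1} and~(\ref{diagonalita}),'' i.e., from evaluating $(u\cdot v)(-)=(u\otimes v)(\Delta(-))$ with the Koszul sign, which is precisely the computation you carry out. Your sign bookkeeping $-\eta\eta'(-1)^{[g_1\in S_1]+[g_2\in S_2]}$ and the identification of the $r<s$, $r\le s<g$, $r=s$ branches with the $s=i+1$, $s=i$ parts of the double sum and the single sum all check out against the stated formulas, and the sanity anchor $\upalpha_r^*\smile\upbeta_r^*=-\upomega^*$ at $S_1=S_2=\varnothing$ is the right one.
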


\medskip By Corollary~\ref{muyinmediato}, $1\in H^0(\uppi_g,\mathbb{Z}_\varnothing)$ is the only non-trivial 0-dimensional cohomology class in the $\widetilde{\mathbb{Z}}$-twisted cohomology of $\uppi_g$. It therefore suffices to describe cup products~(\ref{productosmezclados}) of 1-dimensional classes. When $S_1\neq S_2$, such a product: 
\begin{enumerate}[(i)]
\item lies in a group isomorphic to~$\mathbb{Z}_2$, 
\item is therefore an $\varepsilon$-multiple ($\varepsilon\in\{0,1\}$) of the cohomology class of $\upomega^*$, and
\item can then be easily read off from Lemma~\ref{cochain-level cup-product} and Corollaries~\ref{corodefix1} and~\ref{corodefix}.
\end{enumerate}
Of course, ({\it iii\hspace{.4mm}}) holds for $S_1=S_2$ too, and we recover~\cite[Theorem~3.5]{MR3386228}, except for sign conventions, as in Example~\ref{hhaayyttsmmmldksju}.

\smallskip
The convenience of having a full description of cup products at the cochain level (Lemma~\ref{cochain-level cup-product}) will be clear in the next section, where we deal with cup-products in the $\widetilde{\mathbb{Z}}$-twisted cohomology of products of groups~$\uppi_g$. The latter is the information we need for the topological-robotics application at the end of the paper.

\section{Multiplicative structure in $H^*(\uppi_{g_1}\times\uppi_{g_2};\widetilde{\mathbb{Z}})$}\label{secciondosfactores}
We now address cup-products in the cohomology of a product $\uppi_{g_1,\ldots,g_n}:=\uppi_{g_1}\times\cdots\times\uppi_{g_n}$. For a $\uppi_{g_1,\ldots,g_n}$-module $\widetilde{\mathbb{Z}}$, let $S_i$ stand for the set of generators $a_j$ and $b_j$ of the $i$-th factor $\uppi_{g_i}$ that act on $\widetilde{\mathbb{Z}}$ by changing sign. Then $\widetilde{\mathbb{Z}}$ is the tensor product $\bigotimes_{i}\mathbb{Z}_{S_i}$ where each $\mathbb{Z}_{S_i}$ is the $\uppi_{g_1,\ldots,g_n}$-module obtained by restriction of scalars via the $i$-th projection map $\uppi_{g_1,\ldots,g_n}\to\uppi_{g_i}$. We thus use the notation $\mathbb{Z}_{S_1\otimes\cdots\otimes S_n}$ for such a $\uppi_{g_1,\ldots,g_n}$-module $\widetilde{\mathbb{Z}}=\bigotimes_{i}\mathbb{Z}_{S_i}$. 

\medskip
Note that the K\"unneth formula
$$
H^m(G\times G';M\otimes M')=\bigoplus_{p=0}^mH^p(G;M)\otimes H^{m-p}(G';M')\oplus\bigoplus_{p=0}^{m+1}\mbox{Tor}(H^p(G;M),H^{m+1-p}(G';M'))
$$
can be used to get an {\it additive} description of $H^*(\uppi_{g_1,\ldots,g_n};\mathbb{Z}_{S_1\otimes\cdots\otimes S_n})$. Our task, in order to get the {\it multiplicative} structure, is to describe (A) explicit cocycles representing a full set of cohomology generators, and (B) cup products at the level of cochains.

In principle, as in the previous section, ingredient (B) above would require describing diagonal approximations
\begin{equation}\label{ixicuvyvtbrb}
\Delta^{g_1,\ldots,g_n}\colon M_*^{g_1,\ldots,g_n}\to M_*^{g_1,\ldots,g_n}\otimes M_*^{g_1,\ldots,g_n}.
\end{equation}
These could be made explicit through Corollaries~\ref{alalyvyvnene} and~\ref{porfinlascontraccioneseneltensor}. But a more accessible alternative comes by observing that the (shuffled) tensor product of diagonal approximations yields a diagonal approximation for the tensor product (see~\cite[page 10, exercise~7]{MR672956}). For instance, for diagonal approximations $\Delta^{g_i}\colon M_*^{g_i}\to M_*^{g_i}\otimes M_*^{g_i}$ ($i=1,2$), the composite
\begin{equation}\label{prodtensor}
M_*^{g_1}\otimes M_*^{g_2}\stackrel{\Delta^{g_1}\otimes \Delta^{g_2}}{-\!\!\!-\!\!\!-\!\!\!-\!\!\!\longrightarrow}M_*^{g_1}\otimes M_*^{g_1}\otimes M_*^{g_2}\otimes M_*^{g_2}\stackrel{1\otimes T\otimes1}{-\!\!\!-\!\!\!-\!\!\!\longrightarrow}M_*^{g_1}\otimes M_*^{g_2}\otimes M_*^{g_1}\otimes M_*^{g_2}
\end{equation}
is a diagonal approximation for $M_*^{g_1,g_2}$, where $T$ is the chain map $T(x\otimes y)=(-1)^{\epsilon(x,y)} y\otimes x$, and $\epsilon(x,y)$ is the product of the degrees of $x$ and $y$. (Note that the standard sign convention does not introduce a sign associated to the first map in~(\ref{prodtensor}), for both tensor factors in that map have degree zero.)

The description of explicit diagonal approximations is not the only complexity issue to worry about. The process of dualizing any given diagonal approximation~(\ref{ixicuvyvtbrb}) ---in order to assess cup products--- tends to be tortuous and quickly becomes inaccessible, as tensor cochain complexes $M_*^{g_1,\ldots,g_n}$ have many types of generators. The most efficient alternative is to take full advantage of Lemma~\ref{cochain-level cup-product} and Lemma~\ref{signconventionfinal} below, a result whose proof is an easy exercise with standard sign conventions, e.g.~\cite[Chapter~I, Section~0]{MR672956}. Due to the applications we aim at, Lemma~\ref{signconventionfinal} is stated only for products with two factors. The reader will easily spell out and prove a version of Lemma~\ref{signconventionfinal} for more factors.

\begin{lemma}\label{signconventionfinal}
Consider the cochain complex isomorphism
$$
t\colon\Hom_{\uppi_{g_1}}(M_*^{g_1};\mathbb{Z}_{S_1})\otimes\Hom_{\uppi_{g_2}}(M_*^{g_2};\mathbb{Z}_{S_2})\to\Hom_{\uppi_{g_1,g_2}}(M_*^{g_1,g_2};\mathbb{Z}_{S_1\ot S_2})
$$
sending a tensor product $u\ot v$ of graded morphisms into the graded tensor product morphism $(-1)^{|u||v|}u\ot v\,$ (cf.~\cite[page~10, exercise~7]{MR672956}). Let $\Delta^{g_i}\colon M_*^{g_i}\to M_*^{g_i}\ot M_*^{g_i}$ $(i\in\{1,2\})$ be diagonal approximations with corresponding induced cochain products $$\Hom_{\uppi_{g_i}}(M_*^{g_i};\mathbb{Z}_{S_i})\otimes\Hom_{\uppi_{g_i}}(M_*^{g_i};\mathbb{Z}_{S'_i})\stackrel{p_i}\longrightarrow\Hom_{\uppi_{g_i}}(M_*^{g_i};\mathbb{Z}_{S_i\ominus S_i'}).$$ Then there is a commutative diagram

\begin{flushleft}
\begin{tikzpicture}[commutative diagrams/every diagram, scale=1, every node/.style={scale=0.74}]
        \matrix (m) [
            matrix of math nodes,
            row sep=2em,
            column sep=0.3em,
            text height=1.5ex, text depth=0.25ex
        ]
        {    \Hom_{\uppi_{g_{1},g_{2}}}(M_*^{g_{1},g_{2}};\mathbb{Z}_{S_1\ot S_2}) \otimes \Hom_{\uppi_{g_{1},g_{2}}}(M_*^{g_{1},g_{2}};\mathbb{Z}_{S'_1\ot S'_2}) & {} & \Hom_{\uppi_{g_{1},g_{2}}}(M_*^{g_{1},g_{2}};\mathbb{Z}_{(S_1\ominus S'_1)\ot(S_2\ominus S'_2)}) \\
             \Hom_{\uppi_{g_{1}}}(M_*^{g_{1}};\mathbb{Z}_{S_1}) \otimes \Hom_{\uppi_{g_{2}}}(M_*^{g_{2}};\mathbb{Z}_{S_2}) \otimes \Hom_{\uppi_{g_{1}}}(M_*^{g_{1}};\mathbb{Z}_{S'_1}) \otimes \Hom_{\uppi_{g_{2}}}(M_*^{g_{2}};\mathbb{Z}_{S'_2}) & {} & {} \\
             \Hom_{\uppi_{g_{1}}}(M_*^{g_{1}};\mathbb{Z}_{S_1}) \otimes \Hom_{\uppi_{g_{1}}}(M_*^{g_{1}};\mathbb{Z}_{S'_1}) \otimes \Hom_{\uppi_{g_{2}}}(M_*^{g_{2}};\mathbb{Z}_{S_2}) \otimes \Hom_{\uppi_{g_{2}}}(M_*^{g_{2}};\mathbb{Z}_{S'_2}) & {} & {}\\
             \Hom_{\uppi_{g_{1}}}(M_*^{g_{1}};\mathbb{Z}_{S_1\ominus S'_1}) \otimes \Hom_{\uppi_{g_{2}}}(M_*^{g_{2}};\mathbb{Z}_{S_2\ominus S'_2}) & {}  \\
            };
        \path[overlay,->, font=\scriptsize,>=latex]
        (m-1-1) edge node [above]{$p$} (m-1-3)
        (m-2-1) edge node [left]{$\cong$}(m-1-1)
        (m-2-1) edge node [right]{$t\ot t$}(m-1-1)
        (m-2-1) edge node [right]{$1\ot T\ot1$}(m-3-1)
        (m-3-1) edge node [right]{$p_1\ot p_2$} (m-4-1)        
        (m-4-1) edge[out=0,in=270] node [below]{$t$} (m-1-3);
\end{tikzpicture}
\end{flushleft}
whose top horizontal morphism is the cochain product induced by the shuffled diagonal approximation~(\ref{prodtensor}). 
\end{lemma}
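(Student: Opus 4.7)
The plan is to verify commutativity by direct diagram chasing on pure-degree cochain elements, exploiting the fact that every map in the diagram is defined by an explicit Koszul-sign formula; commutativity will then reduce to a bookkeeping match between two places where signs are produced.

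I would fix homogeneous cochains $f_i\in\Hom_{\uppi_{g_i}}(M^{g_i}_*;\mathbb{Z}_{S_i})$ and $f'_i\in\Hom_{\uppi_{g_i}}(M^{g_i}_*;\mathbb{Z}_{S'_i})$ ($i=1,2$), together with homogeneous basis elements $\xi_i\in M^{g_i}_*$ and decompositions $\Delta^{g_i}(\xi_i)=\sum \xi'_i\otimes\xi''_i$. The strategy is to evaluate the top composite and the bottom composite of the diagram on $(f_1\otimes f_2)\otimes(f'_1\otimes f'_2)$, and then apply the resulting cochains to $\xi_1\otimes\xi_2$.

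First I would compute the top route. The shuffled diagonal~(\ref{prodtensor}) sends $\xi_1\otimes\xi_2$ to $\sum(-1)^{|\xi''_1||\xi'_2|}(\xi'_1\otimes\xi'_2)\otimes(\xi''_1\otimes\xi''_2)$, the sign coming from the chain swap $T$. Applying $t(f_1\otimes f_2)$ to $\xi'_1\otimes\xi'_2$ introduces the $(-1)^{|f_1||f_2|}$ sign from the definition of $t$ together with a Koszul sign $(-1)^{|f_2||\xi'_1|}$ from evaluating a graded tensor morphism on a graded tensor; similarly for $t(f'_1\otimes f'_2)$. For the bottom route, applying $1\otimes T\otimes 1$ at the cochain level produces the Koszul sign $(-1)^{|f_2||f'_1|}$ attached to the reordered tensor $(f_1\otimes f'_1)\otimes(f_2\otimes f'_2)$; then $p_1\otimes p_2$ returns a cochain whose value on $\xi_1\otimes\xi_2$ is obtained by inserting the diagonals $\Delta^{g_i}(\xi_i)$, and finally $t$ tags on the sign $(-1)^{(|f_1|+|f'_1|)(|f_2|+|f'_2|)}$ plus another Koszul sign from evaluating the resulting graded tensor morphism on $\xi_1\otimes\xi_2$.

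The heart of the proof is then the sign-matching step. One collects the exponents mod $2$ from each route. On the Hom-complex side, degrees satisfy $|\xi'_i|=|f_i|$, $|\xi''_i|=|f'_i|$ whenever the product is non-zero, which collapses all $\xi$-related signs into $f$-related signs. A short computation (an instance of the general Koszul/shuffle identity that governs how $\mathrm{Hom}$ distributes over tensor products of chain complexes) shows that every sign cancels in pairs: the two occurrences of $(-1)^{|f_1||f_2|}$ cancel, the chain-level swap sign $(-1)^{|\xi''_1||\xi'_2|}=(-1)^{|f'_1||f_2|}$ matches the cochain-level swap sign $(-1)^{|f_2||f'_1|}$, and the remaining graded-evaluation signs on each route combine to the same total. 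Thus both routes produce the same element of $\Hom_{\uppi_{g_1,g_2}}(M^{g_1,g_2}_*;\mathbb{Z}_{(S_1\ominus S'_1)\otimes (S_2\ominus S'_2)})$.

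The main obstacle, as anticipated in the statement, is purely notational rather than conceptual: one must keep track of four distinct sources of Koszul-type signs (the definition of $t$, the graded evaluation of a tensor morphism on a tensor element, the chain-level swap $T$ inside the shuffled diagonal, and the cochain-level swap $1\otimes T\otimes 1$) and verify they pair off correctly. Since no sign depends on information beyond the degrees of the $f_i$ and $f'_i$, this amounts to a tautological Koszul identity, which is exactly the exercise referenced in~\cite[page~10, exercise~7]{MR672956}.
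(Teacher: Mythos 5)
Your approach---direct sign bookkeeping on pure-degree elements---is the right one, and it matches the paper's own treatment: the authors do not supply a proof for this lemma, dismissing it as ``an easy exercise with standard sign conventions'' with a pointer to Brown's Chapter I, Section 0. So you are proving exactly what the paper leaves to the reader, and the overall strategy (fix homogeneous cochains $f_i, f'_i$, evaluate both routes on a homogeneous $\xi_1\otimes\xi_2$, use that $|\xi'_i|=|f_i|$, $|\xi''_i|=|f'_i|$ on non-zero terms, and compare quadratic sign exponents mod $2$) is sound.

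Two cautions, one of which touches on correctness. First, the commutativity depends quite delicately on a \emph{mutually consistent} set of conventions for three things at once: the evaluation rule for ``the graded tensor product morphism $u\otimes v$'' (naive $u(c)v(d)$ vs.\ Koszul $(-1)^{|v||c|}u(c)v(d)$), the definition of the cup products $p$, $p_i$ from a diagonal approximation (with or without the isomorphism $t$ built in), and the sign in $T$. You adopt the Koszul evaluation rule inside $t$ but leave the sign convention inside $p_1\otimes p_2$ unspecified; these two choices must be tied together, and the pairing you describe (``the two occurrences of $(-1)^{|f_1||f_2|}$ cancel'') only holds literally for one of the consistent packages. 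With the other equally standard package, neither route evaluates to a sign-free product and the two routes agree not by pairwise cancellation but because the same 5-term quadratic exponent $|f_1||f'_1|+|f_1||f'_2|+|f_2||f'_2|+|f_1||f_2|+|f'_1||f'_2|$ appears on both sides after mod-$2$ reduction. It would strengthen the writeup to commit explicitly to one consistent convention for $t$ and the $p$'s, verify it against the paper's internal sign checks (e.g.\ that $\upalpha^*_i\smile\upbeta^*_i=-\upomega^*$ in Example~5.3), and then display the two exponent sums side by side rather than asserting that ``a short computation'' finishes the job. Second, a minor point of attribution: Brown's exercise on page 10 establishes that $t$ itself is a chain isomorphism---it does not state the commutativity of this diagram, which is a separate (though routine) compatibility check between $t$, the Koszul flip $T$, and the shuffled diagonal.
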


Thus, the bulk of the work amounts to identifying explicit cocycles that represent generators of $H^*(\uppi_{g_1,\ldots,g_n};\mathbb{Z}_{S_1\otimes\cdots\otimes S_n})$. Note that, if some $S_i$ is empty, cocycle representatives for $\uppi_{g_1,\ldots g_n}$ can be obtained as exterior graded-tensor products of those for $\uppi_{g_1,\ldots,g_{i-1},g_{i+1},\ldots,g_n}$ and those for $\uppi_{g_i}$ (the latter ones have been described in Example~\ref{hhaayyttsmmmldksju}). Indeed, in such conditions, the corresponding $H^*(\uppi_i;\mathbb{Z}_{S_i})$ is torsion free, and the K\"unneth formula becomes 
$$
H^*(\uppi_{g_1,\ldots,g_n};\mathbb{Z}_{S_1\otimes\cdots\otimes S_n})=H^*(\uppi_{g_1,\ldots,g_{i-1},g_{i+1},\ldots,g_n};\mathbb{Z}_{S_1\otimes\cdots\otimes S_{i-1}\otimes S_{i+1}\otimes\cdots\otimes S_n})\otimes H^*(\uppi_{g_i};\mathbb{Z}_{S_i}).
$$
Consequently, it is safe to assume that $S_i\neq\varnothing$ for all $i\in\{1,\ldots,n\}$ (as done in Propositions~\ref{cdelpdim1}--\ref{cdelpdim3}). Details are worked out next for the case of $\uppi_{g_1,g_2}$.

\begin{proposition}\label{cdelpdim0}
If $S_1=\varnothing=S_2$, then $H^0(\uppi_{g_1,g_2};\mathbb{Z}_{S_1\otimes S_2})=\mathbb{Z}$ with generator represented by the cocycle $(\upchi\otimes\upchi)^*$. If either $S_1\neq\varnothing$ or $S_2\neq\varnothing$, then $H^0(\uppi_{g_1,g_2};\mathbb{Z}_{S_1\otimes S_2})=0$. 
\end{proposition}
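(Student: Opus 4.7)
The plan is to reduce the statement to computing the kernel of the zeroth codifferential in the tensor-product cochain complex $\Hom_{\uppi_{g_1,g_2}}(M_*^{g_1,g_2};\mathbb{Z}_{S_1\otimes S_2})$. Since $M_0^{g_1,g_2}=M_0^{g_1}\otimes M_0^{g_2}$ is $\uppi_{g_1,g_2}$-free of rank one on the single element $\upchi^{g_1}\otimes\upchi^{g_2}$, the zeroth cochain group is $\mathbb{Z}$-free of rank one, generated by $(\upchi^{g_1}\otimes\upchi^{g_2})^*$. So the proposition will follow once one decides whether this generator lies in $\ker d^0$.

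To evaluate $d^0((\upchi^{g_1}\otimes\upchi^{g_2})^*)$, I would dualize the differential $d_1^{g_1,g_2}:M_1^{g_1,g_2}\to M_0^{g_1,g_2}$, which by the tensor structure acts as $d_1^{g_1}\otimes\id+\id\otimes d_1^{g_2}$ on the two summands $M_1^{g_1}\otimes M_0^{g_2}$ and $M_0^{g_1}\otimes M_1^{g_2}$ of $M_1^{g_1,g_2}$. Using the sign convention behind~(\ref{ytrcnvmbn}) (applied one factor at a time), the dualization sends $(\upchi^{g_1}\otimes\upchi^{g_2})^*$ to the cochain whose value on a basis element $\upalpha_i^{g_1}\otimes\upchi^{g_2}$ (respectively $\upbeta_i^{g_1}\otimes\upchi^{g_2}$, and analogously for the second factor) equals $\pm 2$ if $a_i\in S_1$ (respectively $b_i\in S_1$) and zero otherwise; concretely, $d^0((\upchi^{g_1}\otimes\upchi^{g_2})^*)$ is the shuffled sum $2\upsigma^*\otimes\upchi^*+\upchi^*\otimes 2\upsigma^*$ coming from the two factors.

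Thus, if $S_1=\varnothing=S_2$, both pieces vanish, the generator is a cocycle, and it represents all of $H^0=\mathbb{Z}$; and if either $S_i$ is nonempty, at least one of the coefficients $\pm 2$ appears, so the image of $(\upchi^{g_1}\otimes\upchi^{g_2})^*$ is nonzero in the (free abelian) cochain group in degree one, forcing $d^0$ to be injective on a rank-one subgroup and giving $H^0=0$. There is no real obstacle here; this proposition is a bookkeeping warmup for the more delicate Propositions~\ref{cdelpdim1}--\ref{cdelpdim3} that follow. A shorter alternative, if desired, is to invoke $H^0(G;M)=M^G$ directly: the trivial-action case yields $M^G=\mathbb{Z}$, while as soon as some generator of $\uppi_{g_1,g_2}$ acts by $-1$ on $\mathbb{Z}$ the only fixed integer is $0$.
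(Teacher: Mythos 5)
Your computation is correct and coincides with the paper's own direct argument: both identify $d_1^*((\upchi\otimes\upchi)^*)$ with the sum in~(\ref{diffi1}) (your shuffled expression $2\upsigma^*\otimes\upchi^*+\upchi^*\otimes 2\upsigma^*$ is the same element under the identification $(\uplambda_i\otimes\upchi)^*\leftrightarrow\uplambda_i^*\otimes\upchi^*$), and both conclude by observing that this vanishes exactly when $S_1=S_2=\varnothing$ and is otherwise nonzero in a free abelian group. The paper additionally cites the K\"unneth formula together with Corollary~\ref{muyinmediato} as a one-line justification, while your closing remark about $H^0(G;M)=M^G$ is an equally valid (and arguably the cleanest) shortcut, but there is no substantive difference in approach.
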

\begin{proof}
This result is a direct consecuence of the K\"unneth formula and Corollary~\ref{muyinmediato}. We include a direct argument for future reference. Proposition~\ref{spelledout} implies that the differential $d_1^*\colon\Hom_{\uppi_{g_1,g_2}}(M_0^{g_1,g_2};\mathbb{Z}_{S_1\otimes S_2})\to\Hom_{\uppi_{g_1,g_2}}(M_1^{g_1,g_2};\mathbb{Z}_{S_1\otimes S_2})$ is determined by
\begin{equation}\label{diffi1}
d_1^*\left((\upchi\otimes\upchi)^*\right)=2\left(\, \sum_{\ell_i\in S_1}(\uplambda_i\otimes\upchi)^* + \sum_{m_j\in S_2}(\upchi\otimes\upmu_j)^*\right)
\end{equation}
(as in the case of $d_1^*(\upchi^*)$ in~(\ref{ytrcnvmbn}), this accounts for the standard sign convention for the coboundary map). So, $d_1^*=0$ for $S_1=S_2=\varnothing$, while $d_1^*$ is injective otherwise.
\end{proof}

Following the notation in Lemma~\ref{cochain-level cup-product}, we write $\uplambda_i$ in~(\ref{diffi1}) and below for either $\upalpha_i$ or~$\upbeta_i$ ($1\leq i\leq g_1$), depending on whether $\ell_i$ is $a_i$ or $b_i$, respectively. Likewise, $\upmu_j$ stands for the greek-letter version of a generator $m_j$ ($1\leq j\leq g_2$) of $\uppi_{g_2}$. Note that we are not using any special notation to tell apart the generators (or their greek-letter versions) of the two groups $\uppi_{g_i}$. This causes no confusion in~(\ref{diffi1}) as the meaning is implicit from the side the generators appear on a tensor product. If the tensor-side distinction is not available, we will use a functional notation such as $a_i(g_1), \ell_i(g_1)\in \uppi_{g_1}$ ($1\leq i\leq g_1$) and $\upbeta_j(g_2),\upmu_j(g_2)\in M_*^{g_2}$ ($1\leq j\leq g_2$). Nonetheless, the functional notation will be waived when making a distinction becomes irrelevant.

As suggested in the proof of Proposition~\ref{cdelpdim0},~(\ref{diffi1}) can be obtained by dualizing the tensor square of the differential $d_*$ in Proposition~\ref{spelledout}. The same formula can be obtained with much less effort from~(\ref{ytrcnvmbn}) and Lemma~\ref{signconventionfinal}. The latter is the method we use for formulas~(\ref{diffi2A})--(\ref{diffi4B}) below. The straightforward details are left as an exercise for the reader.

\begin{lemma}
The differential $d_2^*\colon\Hom_{\uppi_{g_1,g_2}}(M_1^{g_1,g_2};\mathbb{Z}_{S_1\otimes S_2})\to\Hom_{\uppi_{g_1,g_2}}(M_2^{g_1,g_2};\mathbb{Z}_{S_1\otimes S_2})$ is determined by
\begin{align}
d^*_2\left((\uplambda_i\otimes\upchi)^*\right)&=2\left(\sum_{m_j\in S_2}(\uplambda_i\otimes\upmu_j)^*+\epsilon_{\uplambda_i}\left\{\begin{matrix}
(\upomega\otimes\upchi)^*,&\mbox{if \ $\widehat{\,\ell_i\,}\in S_1$}\\ 0,&\mbox{if \ $\widehat{\,\ell_i\,}\not\in S_1$} 
\end{matrix}\right\}\right),\label{diffi2A}\\
d^*_2\left((\upchi\otimes\upmu_j)^*\right)&=2\left(-\sum_{\ell_i\in S_1}(\uplambda_i\otimes\upmu_j)^*+\epsilon_{\upmu_j}\left\{\begin{matrix}
(\upchi\otimes\upomega)^*,&\mbox{if \ $\widehat{m_j}\in S_2$}\\ 0,&\mbox{if \ $\widehat{m_j}\not\in S_2$} 
\end{matrix}\right\}\right).\label{diffi2B}
\end{align}
Here $\epsilon_{\upalpha_k}=1$, $\epsilon_{\upbeta_k}=-1$, $\widehat{a_k}=b_k$ and $\widehat{b_k}=a_k$ for $k=1,\ldots,g_i$ and $i=1,2$. Likewise, the differential $d_3^*\colon\Hom_{\uppi_{g_1,g_2}}(M_2^{g_1,g_2};\mathbb{Z}_{S_1\otimes S_2})\to\Hom_{\uppi_{g_1,g_2}}(M_3^{g_1,g_2};\mathbb{Z}_{S_1\otimes S_2})$ is determined by
\begin{align}
d^*_3\left((\upomega\otimes\upchi)^*\right)&=2\sum_{m_j\in S_2}(\upomega\otimes\upmu_j)^*,\label{diffi3A}\\
d^*_3\left((\upchi\otimes\upomega)^*\right)&=2\sum_{\ell_i\in S_1}(\uplambda_i\otimes\upomega)^*,\label{diffi3B}\\
d^*_3\left((\uplambda_i\otimes\upmu_j)^*\right)&=2\left(\left\{\begin{matrix}
\epsilon_{\upmu_j}(\uplambda_i\otimes\upomega)^*,&\mbox{if \ $\widehat{m_j}\in S_2$}\\ 0,&\mbox{if \ $\widehat{m_j}\not\in S_2$} 
\end{matrix}\right\}-\left\{\begin{matrix}
\epsilon_{\uplambda_i}(\upomega\otimes\upmu_j)^*,&\mbox{if \ $\widehat{\,\ell_i\,}\in S_1$}\\ 0,&\mbox{if \ $\widehat{\,\ell_i\,}\not\in S_1$}\end{matrix}\right\}\right),\label{diffi3C}
\end{align}
and the differential $d_4^*\colon\Hom_{\uppi_{g_1,g_2}}(M_3^{g_1,g_2};\mathbb{Z}_{S_1\otimes S_2})\to\Hom_{\uppi_{g_1,g_2}}(M_4^{g_1,g_2};\mathbb{Z}_{S_1\otimes S_2})$ by
\begin{align}
d_4^*((\uplambda_i\otimes\upomega)^*)= & \begin{cases} 2\epsilon_{\uplambda_i}(\upomega\otimes\upomega)^*, & \widehat{\,\ell_i\,}\in S_1; \\ 0, & \widehat{\,\ell_i\,}\not\in S_1,
\end{cases}\label{diffi4A}\\
d_4^*((\upomega\otimes\upmu_j)^*)= & \begin{cases} 2\epsilon_{\upmu_j}(\upomega\otimes\upomega)^*, & \widehat{m_j}\in S_2; \\ 0, & \widehat{m_j}\not\in S_2.
\end{cases}\label{diffi4B}
\end{align}
\end{lemma}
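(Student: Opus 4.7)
The plan is to reduce each cochain differential on $M^{g_1,g_2}_*$ to its single-factor analogues via Lemma~\ref{signconventionfinal}, and then invoke the one-factor coboundary formulas in~(\ref{ytrcnvmbn}). First I identify the $\uppi_{g_1,g_2}$-basis of each $M_k^{g_1,g_2}$ as external tensors of basis vectors from Proposition~\ref{spelledout}, so in degree~1 the basis is $\{\uplambda_i\ot\upchi,\,\upchi\ot\upmu_j\}$, in degree~2 it is $\{\uplambda_i\ot\upmu_j,\,\upomega\ot\upchi,\,\upchi\ot\upomega\}$, in degree~3 it is $\{\uplambda_i\ot\upomega,\,\upomega\ot\upmu_j\}$, and in degree~4 it is $\{\upomega\ot\upomega\}$. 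A direct check on dual basis elements using the normalizing sign $(-1)^{|u||v|}$ built into the isomorphism $t$ of Lemma~\ref{signconventionfinal} shows that $t(x^*\ot y^*)=(x\ot y)^*$ for every such basis pair; hence the target differential transports along $t^{-1}$ to the standard coboundary on the tensor product of cochain complexes.

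Having made this identification, the calculation becomes mechanical. For $u$ of degree $p$ and $v$ of degree $q$, the tensor coboundary takes the usual form $\delta(u\ot v)=\delta u\ot v\,\pm\,u\ot \delta v$, with sign dictated by the conventions of~\cite{MR672956} that are already fixed by~(\ref{ytrcnvmbn}). Substituting the one-factor values $d_1^*(\upchi^*)=2\upsigma^*$ and $d_2^*(\uplambda_i^*)=2\epsilon_{\uplambda_i}\upomega^*$ (nonzero only when $\widehat{\,\ell_i\,}$ lies in the relevant twist set) from~(\ref{ytrcnvmbn}), and then pushing the result back through $t$, yields each of~(\ref{diffi2A})--(\ref{diffi4B}). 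For instance,~(\ref{diffi3A}) comes from $\delta(\upomega^*\ot\upchi^*)=\upomega^*\ot d_1^*(\upchi^*)=2\upomega^*\ot\sum_{m_j\in S_2}\upmu_j^*$, while the mixed cases~(\ref{diffi2A}),~(\ref{diffi2B}) and~(\ref{diffi3C}) use both summands of the tensor coboundary; the alternating signs between those two summands are exactly what produces the opposite signs of the $\sum_{\ell_i\in S_1}$ and $\sum_{m_j\in S_2}$ terms in~(\ref{diffi2A})--(\ref{diffi2B}) and the two halves of~(\ref{diffi3C}). Formulas~(\ref{diffi4A})--(\ref{diffi4B}) arise in the same way as the degree-2 calculation of $d_2^*$ on a single factor, tensored against a $\upomega^*$ from the other factor.

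The main point requiring care is sign bookkeeping, since three conventions interact: the Koszul sign $(-1)^{|u||v|}$ in $t$, the $(-1)^{\bullet}$ sign in the tensor cochain coboundary, and the $\epsilon_{\uplambda}\in\{\pm1\}$ coming from~(\ref{ytrcnvmbn}). Once these are aligned consistently, the signs in~(\ref{diffi2A})--(\ref{diffi4B})---notably the minus in front of $\sum_{\ell_i\in S_1}$ in~(\ref{diffi2B}) and the opposite signs of the two summands in~(\ref{diffi3C})---emerge without surprise. This is why the author is content to leave the residual verification as an exercise: the algebraic structure is forced by Lemma~\ref{signconventionfinal}, and only the uniform propagation of signs through the three layers needs to be tracked.
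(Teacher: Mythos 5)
Your overall strategy --- transport the coboundary along the cochain isomorphism $t$ of Lemma~\ref{signconventionfinal} and then plug in the one-factor formulas~(\ref{ytrcnvmbn}) --- is exactly the method the paper announces just before the lemma (``the latter is the method we use for formulas~(\ref{diffi2A})--(\ref{diffi4B}) below; the straightforward details are left as an exercise''). So the route is right. What is not right, or at least not yet established, is the key sign claim you build everything on.

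You assert that the Koszul normalization $(-1)^{|u||v|}$ in $t$ exactly undoes the Koszul sign implicit in evaluating a graded tensor morphism on a basis element, so that $t(x^*\otimes y^*)=(x\otimes y)^*$ for \emph{every} pair, and that consequently the $d^*$ on $\Hom_{\uppi_{g_1,g_2}}(M_*^{g_1,g_2};\cdot)$ transports to the ``usual'' Leibniz coboundary $\delta(u\otimes v)=\delta u\otimes v+(-1)^{|u|}u\otimes\delta v$. If both of those were literally true, then for~(\ref{diffi2B}) you would get
\[
t\bigl(\delta\upchi^*\otimes\upmu_j^*+\upchi^*\otimes\delta\upmu_j^*\bigr)
=+\,2\!\!\sum_{\ell_i\in S_1}\!(\uplambda_i\otimes\upmu_j)^*+2\epsilon_{\upmu_j}(\upchi\otimes\upomega)^*\{\widehat{m_j}\in S_2\},
\]
whereas the correct answer (which one confirms directly by evaluating $(\upchi\otimes\upmu_j)^*$ on $d(\uplambda_i\otimes\upmu_j)=d\uplambda_i\otimes\upmu_j-\uplambda_i\otimes d\upmu_j$) has a \emph{minus} in front of the $\sum_{\ell_i\in S_1}$ term. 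A symmetric sign mismatch occurs in~(\ref{diffi2A}), and the same issue hides in~(\ref{diffi3C}). The one formula you actually work out, (\ref{diffi3A}), sidesteps the problem entirely because $\upomega^*$ has even degree and one Leibniz summand vanishes; the cases with two odd-degree cochains are exactly where the three sign conventions you list interact nontrivially, and for those you only assert that the signs ``emerge without surprise'' without exhibiting the cancellation. So the hard part of the exercise --- reconciling (i) the Koszul sign inside the graded tensor morphism, (ii) the extra $(-1)^{|u||v|}$ in $t$, and (iii) the $(-1)^{\deg+1}$ in Brown's coboundary, and noting that these do \emph{not} cancel to give $t(x^*\otimes y^*)=(x\otimes y)^*$ and a plain Leibniz rule simultaneously --- is precisely what your proposal skips. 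To repair it, either carry through the signs honestly in one of the odd-odd cases (e.g.\ (\ref{diffi2B})), or do what the paper hints at in the proof of Proposition~\ref{cdelpdim0}: compute $d^*_kF$ directly by evaluating $F$ on $d_k$ of the tensor basis elements of $M_*^{g_1,g_2}$, where the minus sign in~(\ref{diffi2B}) is transparently the Koszul sign $(-1)^{|\uplambda_i|}$ coming from $d(\uplambda_i\otimes\upmu_j)$.
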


\begin{proposition}\label{cdelpdim1}
For $S_1\neq\varnothing\neq S_2$, $H^1(\uppi_{g_1,g_2};\mathbb{Z}_{S_1\otimes S_2})=\mathbb{Z}_2$. The generator is represented by the cocycle
\begin{equation}\label{elciclitoendim1}
\upnu=\sum_{\ell_i\in S_1}(\uplambda_i\otimes\upchi)^* + \sum_{m_j\in S_2}(\upchi\otimes\upmu_j)^*.
\end{equation}
\end{proposition}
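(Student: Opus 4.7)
The plan is to read off $H^1$ directly from the cochain complex using the explicit coboundary formulas~(\ref{diffi1})--(\ref{diffi2B}). By~(\ref{diffi1}), the image of $d_1^*$ on 1-cochains is the cyclic subgroup generated by $2\upnu$. Hence, once I verify that $\upnu$ is itself a cocycle and that the kernel of $d_2^*$ on 1-cochains is precisely $\mathbb{Z}\cdot\upnu$, the conclusion $H^1=\mathbb{Z}_2$ generated by the class of $\upnu$ follows at once.

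To pin down the kernel, I start from a general 1-cochain
\[
\eta=\sum_i c_i\,(\uplambda_i\otimes\upchi)^*+\sum_j d_j\,(\upchi\otimes\upmu_j)^*,
\]
with $i$ ranging over the generators of $\uppi_{g_1}$ and $j$ over those of $\uppi_{g_2}$, and apply~(\ref{diffi2A}) and~(\ref{diffi2B}). Equating the coefficient of $(\uplambda_i\otimes\upmu_j)^*$ in $d_2^*(\eta)$ to zero produces, in each of the four cases determined by whether $\ell_i\in S_1$ and whether $m_j\in S_2$, a linear constraint on $c_i$ and $d_j$. Using the hypothesis $S_1,S_2\neq\varnothing$, these constraints force $c_i=0$ for all $\ell_i\notin S_1$, $d_j=0$ for all $m_j\notin S_2$, and a common value $c_i=d_j=k\in\mathbb{Z}$ across the remaining indices; equivalently, $\eta=k\upnu$. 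It then remains to check that the $(\upomega\otimes\upchi)^*$- and $(\upchi\otimes\upomega)^*$-components of $d_2^*(k\upnu)$ also vanish. Up to the overall factor $2k$, these are sums of $\epsilon_{\uplambda_i}$ over $\ell_i\in S_1$ with $\widehat{\ell_i}\in S_1$, and of $\epsilon_{\upmu_j}$ over $m_j\in S_2$ with $\widehat{m_j}\in S_2$. The surviving indices group into pairs $\{a_s,b_s\}$ (both in $S_1$, respectively $S_2$), each contributing $\epsilon_{\upalpha_s}+\epsilon_{\upbeta_s}=1+(-1)=0$, so both sums vanish. This confirms both that $\upnu$ is a cocycle and that $\ker(d_2^*)$ on 1-cochains equals $\mathbb{Z}\cdot\upnu$.

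The main obstacle is the case-by-case bookkeeping in the $(\uplambda_i\otimes\upmu_j)^*$-coefficient analysis, where the non-emptiness hypotheses on $S_1$ and $S_2$ are essential for forcing the vanishing of $c_i$ and $d_j$ outside $S_1$ and $S_2$. As an independent sanity check, the additive conclusion $H^1=\mathbb{Z}_2$ also drops out of the K\"unneth formula applied to Corollaries~\ref{muyinmediato},~\ref{corodefix1}, and~\ref{corodefix}: since $H^0(\uppi_{g_i};\mathbb{Z}_{S_i})=0$, the only surviving K\"unneth summand is $\mathrm{Tor}(H^1(\uppi_{g_1};\mathbb{Z}_{S_1}),H^1(\uppi_{g_2};\mathbb{Z}_{S_2}))=\mathrm{Tor}(\mathbb{Z}_2,\mathbb{Z}_2)=\mathbb{Z}_2$. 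However, K\"unneth does not directly exhibit the explicit cocycle representative~(\ref{elciclitoendim1}), so the direct cochain-level approach is the more natural route here.
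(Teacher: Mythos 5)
Your proof is correct, but it takes a genuinely different route from the paper's. You directly compute $\ker(d_2^*)$ on $1$-cochains: setting the $(\uplambda_i\otimes\upmu_j)^*$-components of $d_2^*(\eta)$ to zero forces $\eta=k\upnu$ (using $S_1,S_2\neq\varnothing$ to propagate the equalities $c_i=d_j$ and the vanishings $c_i=0$, $d_j=0$ outside $S_1$, $S_2$), and then you verify that the $(\upomega\otimes\upchi)^*$- and $(\upchi\otimes\upomega)^*$-components vanish for $k\upnu$ by the cancellation $\epsilon_{\upalpha_s}+\epsilon_{\upbeta_s}=0$. Combined with $\operatorname{im}(d_1^*)=\mathbb{Z}\cdot 2\upnu$, this gives both the group structure and the generator in one pass. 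The paper instead splits the job: it cites the K\"unneth formula together with Corollaries~\ref{muyinmediato},~\ref{corodefix1},~\ref{corodefix} to get $H^1\cong\mathbb{Z}_2$, and then identifies $\upnu$ as a cocycle not by computing $\ker(d_2^*)$ but via the slick observation that $d_2^*(2\upnu)=d_2^*d_1^*\bigl((\upchi\otimes\upchi)^*\bigr)=0$ and $\Hom_{\uppi_{g_1,g_2}}(M_*^{g_1,g_2};\mathbb{Z}_{S_1\otimes S_2})$ is torsion-free, so $d_2^*(\upnu)=0$ already. (The paper does note that a direct verification from~(\ref{diffi1})--(\ref{diffi2B}) is also possible, which is essentially your route, but does not carry it out.) Your approach buys self-containedness and makes the kernel explicit, at the cost of the case-by-case bookkeeping; the paper's approach is shorter and avoids having to compute $\ker(d_2^*)$ at all, at the cost of deferring the group identification to K\"unneth. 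Both arguments are valid, and your sanity check via K\"unneth/Tor is exactly the paper's primary argument for the additive statement.
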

\begin{proof}
The first assertion follows from the K\"unneth formula together with Corollaries~\ref{muyinmediato}, \ref{corodefix1} and~\ref{corodefix}. The fact that~(\ref{elciclitoendim1}) is a representing cocycle for the generator follows from a direct verification using~(\ref{diffi1})--(\ref{diffi2B}) or, more easily, from~(\ref{diffi1}) and the fact that $d_2^* d_1^*=0$, as $\Hom_{\uppi_{g_1,g_2}}(M_*^{g_1,g_2};\mathbb{Z}_{S_1\otimes S_2})$ is (torsion) free.
\end{proof}

Proposition~\ref{cdelpdim1} identifies the first dimension in the twisted cohomology of $\uppi_{g_1,g_2}$ with classes that are not given as exterior products (i.e., classes that come from the Tor part in the K\"unneth formula). Such classes appear also in dimension 2 and 3, in view of Propositions~\ref{cdelpdim2} and~\ref{cdelpdim3} below (but not in dimension 4, as observed in Remark~\ref{quepasaendimension4}).

\begin{proposition}\label{cdelpdim2}
For $S_1\neq\varnothing\neq S_2$, $H^2(\uppi_{g_1,g_2};\mathbb{Z}_{S_1\otimes S_2})=F^{g_1,g_2}_{S_1\otimes S_2}\oplus T^{g_1,g_2}_{S_1\otimes S_2}$, where $F^{g_1,g_2}_{S_1\otimes S_2}$ is free abelian of rank $4(g_1-1)(g_2-1)$, and $T^{g_1,g_2}_{S_1\otimes S_2}$ is a $\mathbb{Z}_2$-vector space of dimension $2(g_1+g_2)-1$. Cocycle representatives for a $\mathbb{Z}$-basis of $F^{g_1,g_2}_{S_1\otimes S_2}$ are given by exterior products of (cocycle representatives of\hspace{.5mm}) the basis elements described in Corollaries~\ref{corodefix1} and~\ref{corodefix} for the torsion-free part of the factors in $H^1(\uppi_{g_1};\mathbb{Z}_{S_1})\otimes H^1(\uppi_{g_2};\mathbb{Z}_{S_2})$. Cocycle representatives for a set of $\mathbb{Z}_2$-generators of $T^{g_1,g_2}_{S_1\otimes S_2}$ are given by the $2(g_1+g_2)$ cocyles $\upalpha'_i$, $\upbeta'_i$ $(1\le i\le g_1)$ and $\upalpha''_j$, $\upbeta''_j$ $(1\le j\le g_2)$ defined by
\begin{align*}
\uplambda'_i=&\sum_{m_j\in S_2}(\uplambda_i\otimes\upmu_j)^*+\epsilon_{\uplambda_i}\left\{\begin{matrix}
(\upomega\otimes\upchi)^*,&\mbox{if \ $\widehat{\,\ell_i\,}\in S_1$}\\ 0,&\mbox{if \ $\widehat{\,\ell_i\,}\not\in S_1$} 
\end{matrix}\right\},\\
\upmu''_j=&\sum_{\ell_i\in S_1}(\uplambda_i\otimes\upmu_j)^*-\epsilon_{\upmu_j}\left\{\begin{matrix}
(\upchi\otimes\upomega)^*,&\mbox{if \ $\widehat{m_j}\in S_2$}\\ 0,&\mbox{if \ $\widehat{m_j}\not\in S_2$} 
\end{matrix}\right\}.
\end{align*}
An actual $\mathbb{Z}_2$-basis of $T_{S_1\otimes S_2}^{g_1,g_2}$ is obtained by removing, from the above set of generators, any one of the cocycles $\uplambda'_i$ with $\ell_i\in S_1$. Alternatively, any one of the cocycles $\upmu''_j$ with $m_j\in S_2$ can be removed in order to get a $\mathbb{Z}_2$-basis.
\end{proposition}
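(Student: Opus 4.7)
The plan is to combine K\"unneth's formula (for the abstract additive structure of $H^2$) with the explicit coboundary formulas~\eqref{diffi1} and~\eqref{diffi2A}--\eqref{diffi4B} (for the identification of explicit cocycle representatives). From Corollaries~\ref{muyinmediato}, \ref{corodefix1}, and~\ref{corodefix}, for $S_i\ne\varnothing$ the factors satisfy $H^0(\uppi_{g_i};\mathbb{Z}_{S_i})=0$, $H^1(\uppi_{g_i};\mathbb{Z}_{S_i})=\mathbb{Z}^{2g_i-2}\oplus\mathbb{Z}_2$, and $H^2(\uppi_{g_i};\mathbb{Z}_{S_i})=\mathbb{Z}_2$. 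Applying K\"unneth at total degree~$2$ will produce the tensor contribution $H^1\otimes H^1\cong\mathbb{Z}^{4(g_1-1)(g_2-1)}\oplus\mathbb{Z}_2^{2g_1+2g_2-3}$ together with $\mathrm{Tor}(H^1,H^2)\oplus\mathrm{Tor}(H^2,H^1)\cong\mathbb{Z}_2\oplus\mathbb{Z}_2$, totalling $\mathbb{Z}^{4(g_1-1)(g_2-1)}\oplus\mathbb{Z}_2^{2(g_1+g_2)-1}$ and matching the announced ranks.

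For the free summand $F^{g_1,g_2}_{S_1\otimes S_2}$, I would invoke the shuffled diagonal approximation~\eqref{prodtensor}, which converts an exterior product of two cocycles into a cocycle of $M_*^{g_1,g_2}$. Pairing the torsion-free 1-cocycle representatives of Corollaries~\ref{corodefix1}/\ref{corodefix} across the two factors will produce $(2g_1-2)(2g_2-2)=4(g_1-1)(g_2-1)$ bi-degree-$(1,1)$ cocycles, and linear independence of their classes will follow immediately from the K\"unneth splitting, so they will form a $\mathbb{Z}$-basis of $F^{g_1,g_2}_{S_1\otimes S_2}$.

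For the torsion summand $T^{g_1,g_2}_{S_1\otimes S_2}$, I would read off from formulas~\eqref{diffi2A}--\eqref{diffi2B} that $d_2^*((\uplambda_i\otimes\upchi)^*)=2\uplambda'_i$ and $d_2^*((\upchi\otimes\upmu_j)^*)=-2\upmu''_j$. Combined with $d_3^*d_2^*=0$ and the torsion-freeness of the dimension-$3$ cochain group, this will force $d_3^*(\uplambda'_i)=d_3^*(\upmu''_j)=0$, so both $\uplambda'_i$ and $\upmu''_j$ are cocycles, and their doubles being coboundaries will make them $2$-torsion. Conversely, if $z$ is any $2$-cocycle whose double is a coboundary, writing $2z=d_2^*\bigl(\sum_i n_i(\uplambda_i\otimes\upchi)^*+\sum_j m_j(\upchi\otimes\upmu_j)^*\bigr)=2\sum_i n_i\uplambda'_i-2\sum_j m_j\upmu''_j$ will give, by integrality of the cochain group, $z=\sum_i n_i\uplambda'_i-\sum_j m_j\upmu''_j$; hence $\{[\uplambda'_i]\}\cup\{[\upmu''_j]\}$ will $\mathbb{Z}_2$-span $T^{g_1,g_2}_{S_1\otimes S_2}$.

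Finally, a direct cochain-level expansion will give $\sum_{\ell_i\in S_1}\uplambda'_i=\sum_{\ell_i\in S_1,\,m_j\in S_2}(\uplambda_i\otimes\upmu_j)^*=\sum_{m_j\in S_2}\upmu''_j$, the $(\upomega\otimes\upchi)^*$ corrections cancelling whenever both $a_i,b_i\in S_1$ because $\epsilon_{\upalpha_i}+\epsilon_{\upbeta_i}=0$, and symmetrically for the $(\upchi\otimes\upomega)^*$ corrections under $S_2$. Since the K\"unneth count yields $\dim_{\mathbb{Z}_2}T^{g_1,g_2}_{S_1\otimes S_2}=2(g_1+g_2)-1$ while the $2(g_1+g_2)$ generators $[\uplambda'_i],[\upmu''_j]$ satisfy the non-trivial $\mathbb{Z}_2$-relation just exhibited, this must be the only relation, so removing any one $[\uplambda'_i]$ with $\ell_i\in S_1$ (equivalently any one $[\upmu''_j]$ with $m_j\in S_2$) will leave a $\mathbb{Z}_2$-basis. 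The principal obstacle is the sign-cancellation bookkeeping that produces this single relation; everything else reduces to standard K\"unneth arithmetic and direct use of the already-tabulated coboundary formulas.
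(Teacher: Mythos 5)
Your proposal is correct and matches the paper's own argument essentially step for step: the additive structure from K\"unneth plus Corollaries~\ref{muyinmediato}--\ref{corodefix}, the observation that $d_2^*((\uplambda_i\otimes\upchi)^*)=2\uplambda'_i$ and $d_2^*((\upchi\otimes\upmu_j)^*)=-2\upmu''_j$ together with torsion-freeness of the cochain groups to show that the $\uplambda'_i,\upmu''_j$ are $2$-torsion cocycles generating $T^{g_1,g_2}_{S_1\otimes S_2}$, and the single relation $\sum_{\ell_i\in S_1}\uplambda'_i=\sum_{m_j\in S_2}\upmu''_j$ (forced by $\epsilon_{\upalpha_k}+\epsilon_{\upbeta_k}=0$) combined with the dimension count. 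Your intermediate use of $d_3^*d_2^*=0$ to confirm the cocycle condition, and your explicit statement that the exhibited relation is the only one by dimension counting, are minor elaborations of the same line of reasoning rather than a different route.
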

\begin{proof}
The first two assertions follow from the K\"unneth formula and Corollaries~\ref{muyinmediato}--\ref{corodefix}. The third assertion follows from~(\ref{diffi2A}) and~(\ref{diffi2B}), for if a cocycle $z\in\Hom_{\uppi_{g_1,g_2}}(M_2^{g_1,g_2};\mathbb{Z}_{S_1\otimes S_2})$ represents a cohomology class whose double vanishes, then $2z$ is a linear combination of the doubles of the cocycles $\uplambda'_i$ and $\upmu''_j$, and the conclusion holds from the fact that $\Hom_{\uppi_{g_1,g_2}}(M_2^{g_1,g_2};\mathbb{Z}_{S_1\otimes S_2})$ is torsion-free. Lastly, the fourth and fifth assertions follow from the series of equalities
{\small\begin{align*}
\sum_{\ell_i\in S_1}\uplambda'_i&=\sum_{\ell_i\in S_1}\left(\,\sum_{m_j\in S_2}(\uplambda_i\otimes\upmu_j)^*+\epsilon_{\uplambda_i}\left\{\begin{matrix}(\upomega\otimes\upchi)^*,&\mbox{if \ $\widehat{\,\ell_i\,}\in S_1$}\\ 0,&\mbox{if \ $\widehat{\,\ell_i\,}\not\in S_1$}\end{matrix}\right\}\right)\\
&=\sum\limits_{\underset{\mbox{\scriptsize $m_j\in S_2$}}{\mbox{\scriptsize $\ell_i\in S_1$}}}\!\!\! (\uplambda_i\otimes\upmu_j)^*+\!\!\!\sum\limits_{\underset{\mbox{\scriptsize $\widehat{\,\ell_i\,}\in S_1$}}{\mbox{\scriptsize $\ell_i\in S_1$}}} \!\!\!\epsilon_{\uplambda_i}(\upomega\otimes\upchi)^*=\!\!\!\sum\limits_{\underset{\mbox{\scriptsize $m_j\in S_2$}}{\mbox{\scriptsize $\ell_i\in S_1$}}} \!\!\!(\uplambda_i\otimes\upmu_j)^*
=\!\!\!\sum\limits_{\underset{\mbox{\scriptsize $\ell_i\in S_1$}}{\mbox{\scriptsize $m_j\in S_2$}}} \!\!\!(\uplambda_i\otimes\upmu_j)^*-\!\!\!\sum\limits_{\underset{\mbox{\scriptsize $\widehat{m_j}\in S_2$}}{\mbox{\scriptsize $m_j\in S_2$}}} \!\!\!\epsilon_{\upmu_j}(\upchi\otimes\upomega)^*\\
&=\sum_{m_j\in S_2}\left(\,\sum_{\ell_i\in S_1}(\uplambda_i\otimes\upmu_j)^*-\epsilon_{\upmu_j}\left\{\begin{matrix}(\upchi\otimes\upomega)^*,&\mbox{if \ $\widehat{m_j}\in S_2$}\\ 0,&\mbox{if \ $\widehat{m_j}\not\in S_2$}\end{matrix}\right\}\right)=\sum_{m_j\in S_2}\upmu''_j,
\end{align*}
\normalsize where the third and fourth equalities hold because $\epsilon_{\upalpha_k}+\epsilon_{\upbeta_k}=0$ for $k=1,\ldots,g_i$ and $i=1,2$.}
\end{proof}

\begin{proposition}\label{cdelpdim3}
For $S_1\neq\varnothing\neq S_2$, choose generators $\ell_0\in\{a_1(g_1),b_1(g_1),\ldots,a_{g_1}(g_1),b_{g_1}(g_1)\}$  and $m_0\in\{a_1(g_2),b_1(g_2),\ldots,a_{g_2}(g_2),b_{g_2}(g_2)\}$ with $\widehat{\,\ell_0\,}\in S_1$ and $\widehat{m_0}\in S_2$. Let $\uplambda_0$ and $\upmu_0$ stand for the corresponding greek-letter elements. Then $H^3(\uppi_{g_1,g_2};\mathbb{Z}_{S_1\otimes S_2})$ is a $\mathbb{Z}_2$-vector space of dimension $2(g_1+g_2)-1$ generated by the (cohomology classes of the) cocycles $\overline{\upalpha}'_i$, $\overline{\upbeta}'_i$ $(1\leq i \leq g_1)$ and $\overline{\upalpha}''_j$, $\overline{\upbeta}''_j$ $(1\leq j \leq g_2)$ defined by
\begin{align*}
\overline{\uplambda}_i'&=\epsilon_{\uplambda_i}(\uplambda_i\otimes\upomega)^*-\left\{\begin{matrix} \epsilon_{\upmu_0}(\upomega\otimes\upmu_0)^*, & \mbox{if $\widehat{\,\ell_i\,}\in S_1$} \\ 0, & \mbox{if $\widehat{\,\ell_i\,}\not\in S_1$}
 \end{matrix} \right\}, \\
\overline{\upmu}''_j&=\left\{\begin{matrix}\epsilon_{\uplambda_0}(\uplambda_0\otimes\upomega)^*, & \widehat{m_j}\in S_2 \\  0, & \widehat{m_j}\not\in S_2 \end{matrix}\right\}-\epsilon_{\upmu_j}(\upomega\otimes\upmu_j)^*.
\end{align*}
The above set of generators is in fact a $\mathbb{Z}_2$-basis (note that $\epsilon_{\uplambda_0}(\uplambda_0\otimes\upomega)^*-\epsilon_{\upmu_0}(\upomega\otimes\upmu_0)^*$ is counted twice, as it agrees with $\overline{\uplambda}'_0$ and $\overline{\upmu}''_0)$.
\end{proposition}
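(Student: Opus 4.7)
The plan is to carry out the proof in three stages, mirroring the strategy used for Propositions~\ref{cdelpdim1} and~\ref{cdelpdim2}. First, I would pin down the additive structure of $H^3(\uppi_{g_1,g_2};\mathbb{Z}_{S_1\ot S_2})$ via the K\"unneth formula applied to the factors already computed in Corollaries~\ref{muyinmediato}, \ref{corodefix1} and \ref{corodefix}. Under the standing hypothesis $S_1\neq\varnothing\neq S_2$, each $H^*(\uppi_{g_i};\mathbb{Z}_{S_i})$ has $H^0=0$, $H^1\cong\mathbb{Z}^{2g_i-2}\oplus\mathbb{Z}_2$, $H^2\cong\mathbb{Z}_2$ and $H^k=0$ for $k\ge 3$. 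The tensor summands contributing to $n=3$ are $H^1(\uppi_{g_1})\ot H^2(\uppi_{g_2})$ and $H^2(\uppi_{g_1})\ot H^1(\uppi_{g_2})$, of respective $\mathbb{Z}_2$-dimensions $2g_1-1$ and $2g_2-1$, while the only nontrivial Tor term is $\mathrm{Tor}(H^2(\uppi_{g_1}),H^2(\uppi_{g_2}))=\mathrm{Tor}(\mathbb{Z}_2,\mathbb{Z}_2)\cong\mathbb{Z}_2$. Summing yields a $\mathbb{Z}_2$-vector space of dimension $2(g_1+g_2)-1$; in particular $2\cdot H^3=0$.

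Second, I would verify that the displayed cochains $\overline{\uplambda}'_i$ and $\overline{\upmu}''_j$ are genuine cocycles by a direct application of~(\ref{diffi4A}) and~(\ref{diffi4B}). The only non-vacuous case is when $\widehat{\,\ell_i\,}\in S_1$ (respectively $\widehat{m_j}\in S_2$), in which the cancellation works because the chosen pivots $\ell_0,m_0$ satisfy $\widehat{\,\ell_0\,}\in S_1$ and $\widehat{m_0}\in S_2$, and because $\epsilon_{\uplambda_i}^2=\epsilon_{\upmu_0}^2=1$; the two $2(\upomega\ot\upomega)^*$-contributions then cancel. A direct substitution shows that $\overline{\uplambda}'_0$ and $\overline{\upmu}''_0$ coincide as cochains (both equal $\epsilon_{\uplambda_0}(\uplambda_0\ot\upomega)^*-\epsilon_{\upmu_0}(\upomega\ot\upmu_0)^*$), so among the nominal $2(g_1+g_2)$ listed generators exactly one identification occurs, leaving at most $2(g_1+g_2)-1$ independent cohomology classes.

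Third, I would show that these classes are in fact linearly independent in $H^3$ and that they span. Since the rank computed in the first step is exactly $2(g_1+g_2)-1$, independence and spanning are equivalent, and it suffices to establish spanning. For this I would observe that the cochain group $\Hom_{\uppi_{g_1,g_2}}(M_3^{g_1,g_2};\mathbb{Z}_{S_1\ot S_2})$ is the free abelian group on $\{(\uplambda_i\ot\upomega)^*,(\upomega\ot\upmu_j)^*\}$, and that the kernel of $d_4^*$ is cut out by a single integer linear condition (from~(\ref{diffi4A})--(\ref{diffi4B})), hence has rank $2(g_1+g_2)-1$ as a free abelian group. The image of $d_3^*$, read off from~(\ref{diffi3A})--(\ref{diffi3C}), is visibly contained in $2\cdot\ker(d_4^*)$, so the quotient $\ker(d_4^*)/\mathrm{im}(d_3^*)$ is generated over $\mathbb{Z}_2$ by any $\mathbb{Z}$-basis of $\ker(d_4^*)$. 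Matching this count against the K\"unneth rank forces the $2(g_1+g_2)-1$ classes $\overline{\uplambda}'_i,\overline{\upmu}''_j$ (with the identification $\overline{\uplambda}'_0=\overline{\upmu}''_0$) to form a $\mathbb{Z}_2$-basis.

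The main obstacle will be stage three, specifically exhibiting a clean $\mathbb{Z}$-basis of $\ker(d_4^*)$ in which the given cocycles are visibly a basis modulo $2\cdot\ker(d_4^*)$. The cleanest route I foresee is to fix the pivots $\ell_0,m_0$, rewrite the single linear condition defining $\ker(d_4^*)$ in the form ``the $(\upomega\ot\upmu_0)^*$-coefficient is determined by the others,'' and check that the list $\{\overline{\uplambda}'_i,\overline{\upmu}''_j\}\setminus\{\overline{\upmu}''_0\}$ is precisely the image, under an explicit unimodular change of variables, of this distinguished $\mathbb{Z}$-basis; the K\"unneth dimension count then closes the argument without any further bookkeeping about the case splits by $\widehat{\,\ell_i\,}\in S_1$ versus $\widehat{\,\ell_i\,}\notin S_1$.
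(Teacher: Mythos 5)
Your proposal is correct and follows essentially the same route as the paper's own proof: compute the $\mathbb{Z}_2$-dimension of $H^3$ from the K\"unneth formula, verify via~(\ref{diffi4A})--(\ref{diffi4B}) that the displayed cochains are cocycles spanning $\ker(d_4^*)$, note the single identification $\overline{\uplambda}'_0=\overline{\upmu}''_0$, and conclude by matching the count $2(g_1+g_2)-1$ against the K\"unneth rank. The only difference is that you spell out in more detail the bookkeeping (the unimodular change of basis exhibiting the cocycles as a $\mathbb{Z}$-basis of $\ker(d_4^*)$, and the containment $\mathrm{im}(d_3^*)\subseteq 2\ker(d_4^*)$) that the paper compresses into the phrase ``transparent from~(\ref{diffi4A}) and~(\ref{diffi4B}).''
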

\begin{proof}
The group structure of $H^3(\uppi_{g_1,g_2};\mathbb{Z}_{S_1\otimes S_2})$ follows from the K\"unneth formula. The fact that the elements $\overline{\uplambda}'_i$ and $\overline{\upmu}''_j$ are cocycles generating the kernel of $d_4^*$ is transparent from~(\ref{diffi4A}) and~(\ref{diffi4B}). The result follows by noticing that there are $2(g_1+g_2)-1$ such cocycles, as $\epsilon_{\uplambda_0}(\uplambda_0\otimes\upomega)^*-\epsilon_{\upmu_0}(\upomega\otimes\upmu_0)^*$ is counted twice: $\overline{\uplambda}'_0=\overline{\upmu}''_0$.
\end{proof}

\begin{remark}\label{quepasaendimension4}
By the K\"unneth formula, $H^4(\uppi_{g_1,g_2};\mathbb{Z}_{S_1\otimes S_2})=H^2(\uppi_{g_1};\mathbb{Z}_{S_1})\otimes H^2(\uppi_{g_2};\mathbb{Z}_{S_2})=\mathbb{Z}_2$ provided either $S_1\neq\varnothing$ or $S_2\neq\varnothing$, with generator given by the cohomology class of the cocycle $\uptau:=(\upomega\otimes\upomega)^*$, i.e., the exterior product of the generators of the factors in the tensor product.
\end{remark}

Using Lemmas~\ref{cochain-level cup-product} and~\ref{signconventionfinal}, Propositions~\ref{cdelpdim1}--\ref{cdelpdim3}, and Remark~\ref{quepasaendimension4}, it is straightforward to read off the structure of any given cup-product $H^*(\uppi_{g_1,g_2};\mathbb{Z}_{S_1\otimes S_2})\otimes H^*(\uppi_{g_1,g_2};\mathbb{Z}_{S'_1\otimes S'_2})\to H^*(\uppi_{g_1,g_2};\mathbb{Z}_{(S_1\ominus S'_1)\otimes(S_2\ominus S'_2)})$.

\section{Effective topological complexity}\label{robotics}
Recall that $\Sigma_g$ (respectively, $\uppi_g$) stands for the closed orientable surface of genus $g$ (respectively, for the fundamental group of $\Sigma_g$). Embed $\Sigma_g$ in $\mathbb{R}^3$ as in Figure~\ref{embedded}, so that $\Sigma_g$ is invariant under reflections in the $xy$-, $xz$- and $yz$-planes. The orientation-reversing involution $\sigma\colon\Sigma_g\to\Sigma_g$ given by $\sigma(x,y,z)=(-x,-y,-z)$ yields an ``antipodal'' action of the group $\z_2=\{e, \sigma\}$ on $\Sigma_g$. The orbit space $N_{g+1}:=\Sigma_g/\z_2$ is a non-orientable surface of genus $g+1$.
\begin{figure}[h!]
\begin{center}
\begin{tikzpicture}[scale=0.65]
\filldraw[black] (20.62,0) circle (1.5pt) node[right] {$\sigma\cdot\star$};
\filldraw[black] (-1.5,0) circle (1.5pt) node[left] {$\star$};
\node at (7.5,0,0){$\dotsb$};
\node at (11.5,0,0){$\dotsb$};
\node[below] at (19.21,-0.7){$H(\star,-)$};
\draw[thick,->, dashed] (9,0,0) -- (9.9,0,0) node[right]{$x$};
\draw[thick,->, dashed] (9,0,0) -- (9.8,1,4) node[below]{$y$};
\draw[thick,->, dashed] (9,0,0) -- (9.8,2,2.01) node[above]{$z$};
\draw (0.8,-0.01) arc (0:180:0.8 and 0.35); 
\draw (-0.85,0.1) arc (180:360:0.85 and 0.4); 
\draw (4.8,-0.01) arc (0:180:0.8 and 0.35); 
\draw (3.15,0.1) arc (180:360:0.85 and 0.4); 
\draw (15.8,-0.01) arc (0:180:0.8 and 0.35); 
\draw (14.15,0.1) arc (180:360:0.85 and 0.4); 
\draw (19.8,-0.01) arc (0:180:0.8 and 0.35); 
\draw (18.15,0.1) arc (180:360:0.85 and 0.4); 
\draw (1.0605,0.5656) arc (45:315:1.5 and 0.8); 
\draw (1.0605,0.5656) to[out=-28.1,in=200] (3,.6); 
\draw (5.12,0.60) arc (45:135:1.5 and 0.8); 
\draw (5.12,0.6) to[out=-28.1,in=200] (7,.6);
\draw (1.0605,-0.5656) to[out=28.1,in=160] (3,-0.5756); 
\draw (2.998,-0.576) arc (225:315:1.5 and 0.8); 
\draw (7.23,-0.578) arc (45:135:1.5 and 0.8); 
\draw (18.0605,-0.5656) arc (-135:135:1.5 and 0.8);
\draw (18.06,-0.566) arc (45:135:1.5 and 0.8); 
\draw (13.82,-0.567) arc (225:315:1.5 and 0.8); 
\draw (13.82,-0.566) arc (45:135:1.5 and 0.8); 
\draw (15.94,0.565) arc (225:315:1.5 and 0.8); 
\draw (15.94,0.566) arc (45:135:1.5 and 0.8); 
\draw (11.70,0.566) arc (225:315:1.5 and 0.8);
\path[overlay,-, font=\scriptsize,>=latex, line width=0.2mm]
(-1.5,0) edge[out=315,in=200] node[] {} (1.5,-0.1)
(3,-0.3) edge[out=340,in=200] node[] {} (5.5,-0.2)
(13.4,-0.15) edge[out=340,in=215] node[] {} (16.3,-0.2)
(17.8,-0.3) edge[out=340,in=215] node[] {} (20.62,0);
\path[overlay,-, font=\scriptsize,>=latex, line width=0.2mm]
(1.5,-0.1) edge[out=20,in=160] node[] {} (3,-0.3)
(5.5,-0.2) edge[out=20,in=160] node[] {} (7.3,-0.4)
(11.6,-0.4) edge[out=20,in=160] node[] {} (13.4,-0.15)
(16.3,-0.2) edge[out=20,in=160] node[] {} (17.8,-0.3);
\draw[->, >=latex,] (7.3,-0.4) to (7.33,-0.41);
\draw[->, >=latex,] (19.21,-0.53) to (19.25,-0.53);
\end{tikzpicture}
\end{center}
\caption{The embedding $\Sigma_g\subset\mathbb{R}^3$. The path $H(\star,-)$ will be relevant later}
\label{embedded}
\end{figure}
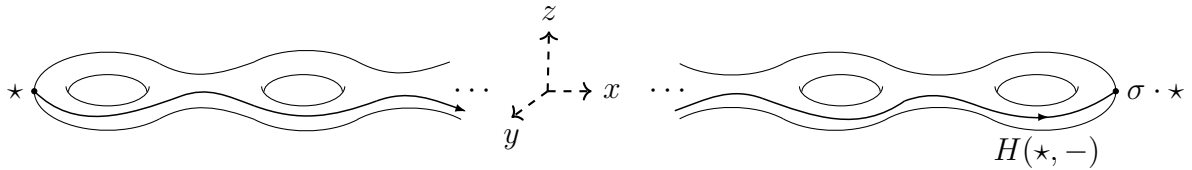

For $i \in \{0,1 \}$ let $e_i:P\Sigma_g \to \Sigma_g$ be the fibration defined by $e_i(\gamma) = \gamma(i)$, where $P\Sigma_g$ stands for the free-path space of $\Sigma_g$. Consider the pullback of the maps $\epsilon_0:P\Sigma_g \xrightarrow{e_0} \Sigma_g \xrightarrow{q} N_{g+1}$ and $\epsilon_1:P\Sigma_g \xrightarrow{e_1} \Sigma_g \xrightarrow{q} N_{g+1}$, where $q$ stands for the canonical quotient projection. Explicitly, $P_2\Sigma_g$ is the subspace of the cartesian product $P\Sigma_g\times P\Sigma_g$ consisting of the pairs $(\upalpha,\upbeta)$ of paths in $\Sigma_g$ such that the starting point of $\upbeta$ lies in the $\mathbb{Z}_2$-orbit of the ending point of $\upalpha$.

\smallskip
The next construction, a particular instance of B{\l}aszczyk-Kaluba's definition of effective topological complexity of a space with a group action~(\cite{MR3738182}), arises from the idea of taking advantage of the symmetries present in the space of states of a given autonomous robot in order to efficiently motion plan the robot's tasks.

\begin{definition}\label{def_Teff}
Let $p_2: P_2\Sigma_g\to \Sigma_g \times \Sigma_g$ be the fibration given by $p_2(\upalpha, \upbeta)=(\upalpha(0), \upbeta(1))$. The (reduced) effective topological complexity of $\Sigma_g$ with respect to its antipodal action, denoted by $\TC^{\z_2}(\Sigma_g)$, is the sectional category $\g(p_2)$. Explicitly, $\TC^{\z_2}(\Sigma_g)$ is the smallest non-negative integer $k$ such that $\Sigma_g \times \Sigma_g$ can be covered by open sets $U_0, \ldots, U_{k}$ in such a way that, for every $i=0, \ldots, k$, there exists a map $s_i:U_i \to P_2\Sigma_g$ such that $p_2 \circ s_i$ is the inclusion $U_i \hookrightarrow \Sigma_g \times \Sigma_g$.
\end{definition}

The proof of Theorem~\ref{etcso} is based on the following observations.

The antipodal involution $\sigma$ on $\Sigma_g$ has no fixed points, so that $P_2\Sigma_g$ can be identified with $\z_2\times P\Sigma_g$, the topological disjoint union of two copies of $P\Sigma_g$. The ``trivial'' copy $\{e\}\times P\Sigma_g$ accounts for the ``honest'' paths, i.e., pairs $(\upalpha,\upbeta)$ in $P_2\Sigma_2$ satisfying $\upalpha(1)=\upbeta(0)$, while the ``shifted'' copy $\{\sigma\}\times P\Sigma_g$ accounts for the ``broken'' paths, i.e., pairs $(\upalpha,\upbeta)\in P_2\Sigma_2$ satisfying $\upalpha(1)=\sigma\cdot\upbeta(0)$. In these terms, $p_2$ takes the form $q_2\colon \z_2\times P\Sigma_g \to \Sigma_g\times\Sigma_g$ with $q_2(\sigma^i,\upgamma)=(\upgamma(0),\sigma^i\cdot\upgamma(1))$, for $i=0,1$. In other words, $q_2$ is the double evaluation map $e_{0,1}\colon P\Sigma_g\to\Sigma_g\times\Sigma_g$ ---whose sectional category defines Farber's $\TC(\Sigma_g)$--- on the honest paths while, on the broken maps, $q_2$ becomes the ``twisted'' double evaluation map $e_{0,1}'\colon P\Sigma_g\to\Sigma_g\times\Sigma_g\hspace{.3mm}$ given by $e_{0,1}'(\gamma)=(\gamma(0),\sigma\cdot\gamma(1))$.

For a point $x\in\Sigma_g$, let $c_x\in P\Sigma_g$ denote the constant path at $x$. This yields a standard homotopy equivalence $\Sigma_g\simeq P\Sigma_g$. The ``saturated diagonal' $j\colon\z_2\times\Sigma_g\hookrightarrow\Sigma_g\times\Sigma_g$ is the composite $\z_2\times\Sigma_g\simeq \z_2\times P\Sigma_g\stackrel{q_2}\longrightarrow\Sigma_g\times \Sigma_g$, i.e., $j(\sigma^i,x)=(x,\sigma^i\cdot x)$, which has two co-components: the standard diagonal map $\Delta=(1,1)\colon\Sigma_g\to\Sigma_g\times\Sigma_g$ and the ``twisted'' diagonal map $\Delta'=(1,\sigma)\colon\Sigma_g\to\Sigma_g\times\Sigma_g$.

Theorem~\ref{etcso} follows at once from Propositions~\ref{auxi} and~\ref{ixua} below. 
\begin{proposition}[{\cite[Theorem~4, page 73]{Sch58}}]\label{auxi}
The sectional category of a fibration $p\colon E\to B$ is bounded from below by the product-length of elements in the kernel of the map induced in (twisted) cohomology by $p$. Explicitly, assume that $\mathcal{A}_i$ ($1\leq i\leq\ell$) are local coefficient systems over $B$ and that cohomology classes $x_i\in H^*(B;\mathcal{A}_i)$ are given with $p^*(x_i)=0$ for $i=1,\ldots,\ell$. If $0\neq x_1\cdots x_\ell\in H^*(B,\bigotimes_i \mathcal{A}_i)$, then $\g(p)\geq\ell$.
\end{proposition}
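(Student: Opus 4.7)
The plan is to prove the contrapositive: assuming $\g(p)\leq\ell-1$, we derive $x_1\cdots x_\ell=0$, contradicting the standing hypothesis. By definition of $\g$, the assumption furnishes an open cover $U_1,\ldots,U_\ell$ of $B$ together with local sections $s_i\colon U_i\to E$ of $p$, so that $p\circ s_i=\iota_i$, the inclusion of $U_i$ in $B$. The first step is to observe that each $x_i$ restricts trivially to $U_i$: indeed,
\begin{equation*}
\iota_i^*(x_i)=(p\circ s_i)^*(x_i)=s_i^*(p^*x_i)=0.
\end{equation*}
The long exact cohomology sequence of the pair $(B,U_i)$ with coefficients in $\mathcal{A}_i$ then furnishes a relative lift $\tilde x_i\in H^*(B,U_i;\mathcal{A}_i)$ mapping to $x_i$ under the forgetful morphism $H^*(B,U_i;\mathcal{A}_i)\to H^*(B;\mathcal{A}_i)$.

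The second step is to multiply these relative lifts via the relative cup product for local coefficients,
\begin{equation*}
H^*(B,U_1;\mathcal{A}_1)\otimes\cdots\otimes H^*(B,U_\ell;\mathcal{A}_\ell)\longrightarrow H^*\bigl(B,\,{\textstyle\bigcup_i U_i}\,;\,{\textstyle\bigotimes_i\mathcal{A}_i}\bigr).
\end{equation*}
Since $\bigcup_i U_i=B$, the target equals $H^*(B,B;\bigotimes_i\mathcal{A}_i)=0$, so $\tilde x_1\smile\cdots\smile\tilde x_\ell=0$. Naturality of the cup product with respect to the map of pairs $(B,\varnothing)\hookrightarrow(B,\bigcup_i U_i)$ identifies the forgetful image of this vanishing relative class with the absolute product $x_1\cdots x_\ell\in H^*(B;\bigotimes_i\mathcal{A}_i)$, forcing $x_1\cdots x_\ell=0$, as required.

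The main obstacle is justifying the relative cup product for local coefficient systems, together with its naturality in the relative part of the pair. For constant coefficients this is textbook material; in the twisted setting the cleanest route is to model everything at the singular cochain level, with coefficients twisted by the $\mathcal{A}_i$, and check that the Alexander--Whitney diagonal is compatible both with the relative subcomplexes (so that cochains vanishing on $U_i$-chains multiply to cochains vanishing on chains of $\bigcup_i U_i$) and with the external tensor product of coefficient systems. This verification is precisely the content of~\cite[Theorem~4]{Sch58}, to which the statement appeals; in our application, one only needs to unwind its conclusion against the particular sectional-category situation at hand.
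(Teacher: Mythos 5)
The paper does not prove this proposition---it is invoked verbatim as Theorem~4 on page~73 of Schwarz~\cite{Sch58}---so there is no in-paper argument to compare against. Your proof is the standard (indeed Schwarz's own) argument for the cup-length lower bound on sectional category: use the local section $s_i$ to show $x_i$ restricts to zero on $U_i$, lift $x_i$ through the long exact sequence of the pair to a relative class in $H^*(B,U_i;\mathcal{A}_i)$, multiply the lifts via the relative cup product into $H^*(B,\bigcup_i U_i;\bigotimes_i\mathcal{A}_i)=H^*(B,B;-)=0$, and push forward by naturality to conclude $x_1\cdots x_\ell=0$. The reduced-category bookkeeping is handled correctly ($\g(p)\leq\ell-1$ gives an $\ell$-element open cover $U_1,\dots,U_\ell$), so the contrapositive is set up right.

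The one flaw is the closing paragraph, which is circular. You identify the technical ingredient you still owe---existence and naturality of the relative cup product with local coefficients---and then attribute \emph{that} to ``\cite[Theorem~4]{Sch58}.'' But Schwarz's Theorem~4 \emph{is} the proposition being proved, not the auxiliary multiplicative lemma. Citing it there amounts to assuming the conclusion. To make the proof self-contained you should instead either (i) cite a source for relative cup products with local coefficients directly (e.g.\ Whitehead~\cite[Ch.~VI]{WhHot} or Spanier), or (ii) sketch the cochain-level verification you allude to: define cochains with local coefficients as equivariant cochains on the universal cover, observe that the Alexander--Whitney diagonal is equivariant, and check that a product of a cochain vanishing on $U_i$-simplices ranges over $i$ to a cochain vanishing on $\bigl(\bigcup_i U_i\bigr)$-simplices after the usual small-simplices/excision argument. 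With that fix the argument is complete and matches the intended reference.
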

\begin{definition}\label{e-z-d}
Let $\mathcal{A}$ be a local coefficient system on $\Sigma_g \times \Sigma_g$. A class $x\in H^* (\Sigma_g \times \Sigma_g; \mathcal{A})$ in the kernel of the map induced by $j$ is called an effective zero-divisor in $\Sigma_g$.
\end{definition}
\begin{proposition}\label{ixua}
For $g\geq 2$, the elements $a,b\in H^1(\uppi_g\times\uppi_g;\mathbb{Z}_\varnothing)$ and $c\in H^2(\uppi_g\times\uppi_g;\mathbb{Z}_\varnothing)$ given by $a=(\upalpha_1\otimes\upchi)^*-(\upchi\otimes\upalpha_1)^*+(\upchi\otimes\upalpha_g)^*-(\upalpha_g\otimes\upchi)^*$, $b=(\upbeta_1\otimes\upchi)^*-(\upchi\otimes\upbeta_1)^*+(\upbeta_g\otimes\upchi)^*-(\upchi\otimes\upbeta_g)^*$, and $c=(\upbeta_1\otimes\upalpha_1)^*-(\upbeta_g\otimes\upalpha_g)^*$
are effective zero-divisors in $\Sigma_g$ having non-zero cup-product.
\end{proposition}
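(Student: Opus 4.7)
The plan is to split the argument into two parts: (I) check that $a, b, c \in \ker j^*$, i.e., vanish under both $\Delta^*$ and $(\Delta')^*$; (II) compute $a\smile b\smile c$ and show it is non-zero. The proposition then follows from Proposition~\ref{auxi} applied to the fibration $p_2$.

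For (I), the vanishing of $\Delta^*$ is routine since $\Delta^*$ agrees with the cup product under the K\"unneth identification: each of $a$ and $b$ is a difference of two ordinary zero-divisors of the form $u\otimes 1-1\otimes u$ so is killed, and $\Delta^*(c)=\upbeta_1^*\smile\upalpha_1^*-\upbeta_g^*\smile\upalpha_g^*=\upomega^*-\upomega^*=0$ by Example~\ref{hhaayyttsmmmldksju}. For $(\Delta')^*$, one first identifies the outer automorphism $\psi\colon\uppi_g\to\uppi_g$ induced by $\sigma$ via the reference path $H(\star,-)$ of Figure~\ref{embedded}: the central-inversion symmetry exchanges handle $i$ with handle $g+1-i$ and is orientation-reversing, which forces an involution $\psi^*$ of $H^1(\uppi_g;\z)$ whose effect (e.g., $\upalpha_i^*\mapsto -\upalpha_{g+1-i}^*$, $\upbeta_i^*\mapsto \upbeta_{g+1-i}^*$, up to the sign conventions one adopts) in particular fixes both $\upalpha_1^*-\upalpha_g^*$ and $\upbeta_1^*+\upbeta_g^*$. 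The signs in $a=(\upalpha_1-\upalpha_g)^*\otimes\upchi^*-\upchi^*\otimes(\upalpha_1-\upalpha_g)^*$ and $b=(\upbeta_1+\upbeta_g)^*\otimes\upchi^*-\upchi^*\otimes(\upbeta_1+\upbeta_g)^*$ are precisely calibrated so that the $\psi^*$-invariance of the ``second-slot'' classes forces $(\Delta')^*$ to kill them; the vanishing on $c$ additionally uses $\upbeta_i^*\smile\upalpha_j^*=0$ for $i\neq j$.

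For (II), with trivial coefficients Lemma~\ref{cochain-level cup-product} reduces to the rules $\upbeta_i^*\smile\upalpha_i^*=\upomega^*=-\upalpha_i^*\smile\upbeta_i^*$, together with the vanishing of all other degree-$1$ products (Example~\ref{hhaayyttsmmmldksju}). Using the K\"unneth sign $(u_1\otimes v_1)\cdot(u_2\otimes v_2)=(-1)^{|v_1||u_2|}(u_1u_2)\otimes(v_1v_2)$ from Lemma~\ref{signconventionfinal}, I first expand $a\smile b$ into $16$ terms: the four ``same-slot'' contributions, which produce $\pm(\upomega\otimes\upchi)^*$ and $\pm(\upchi\otimes\upomega)^*$, cancel in pairs, leaving eight ``cross-slot'' terms of the form $\pm(\upalpha_i\otimes\upbeta_j)^*$ and $\pm(\upbeta_i\otimes\upalpha_j)^*$ for $i,j\in\{1,g\}$. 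Multiplying by $c=(\upbeta_1\otimes\upalpha_1)^*-(\upbeta_g\otimes\upalpha_g)^*$, the orthogonality of distinct handles collapses almost every term, leaving only two surviving contributions that together give $a\smile b\smile c=-2(\upomega\otimes\upomega)^*$. By the K\"unneth formula $H^4(\uppi_g\times\uppi_g;\z)\cong\z$ is generated by $(\upomega\otimes\upomega)^*$, so the triple product is non-zero.

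The main obstacle is two-fold: pinning down $\psi^*$ through a careful geometric analysis of $\sigma$ (constrained, though not immediate from the figure alone), and the sign book-keeping in the triple product. The latter, while tedious, is tractable thanks to the extreme sparsity of the cup-product structure on $H^*(\uppi_g)$, which collapses the nominal $16\times 2=32$ product terms down to just two non-zero survivors.
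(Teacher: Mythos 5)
Your proposal is correct and follows essentially the same route as the paper's own proof: reduce the effective zero-divisor condition to vanishing under $\Delta^*$ and under the homotoped twisted diagonal $(\Delta'')^*$, pin down the automorphism of $H^1(\uppi_g;\mathbb{Z})$ induced by the based replacement $\mu$ of $\sigma$ (your $\psi^*$: $\upalpha_i^*\mapsto -\upalpha_{g+1-i}^*$, $\upbeta_i^*\mapsto \upbeta_{g+1-i}^*$, which is exactly formula~(\ref{kuppa})), and then push the K\"unneth sign rule through the sparse cup-product structure to land on a nonzero multiple of $(\upomega\otimes\upomega)^*$. The only small divergence is that the paper evaluates the triple product to $+2(\upomega\otimes\upomega)^*$ where you get $-2(\upomega\otimes\upomega)^*$; this is a bookkeeping issue with the Koszul sign hidden in the isomorphism $t$ of Lemma~\ref{signconventionfinal} when translating $(\upbeta_i\otimes\upalpha_i)^*$ into $\upbeta_i^*\otimes\upalpha_i^*$, and it does not affect the conclusion.
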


We neglect any difference between cohomology classes and their cochain representatives in Proposition~\ref{ixua}, as coboundaries vanish with trivial coefficients (see for instance~(\ref{ytrcnvmbn})). The rest of the section is devoted to the proof of Proposition~\ref{ixua}.

By definition, effective zero-divisors in $\Sigma_g$ are elements in the intersection of the kernels of the maps induced in cohomology by the two diagonals $\Delta,\Delta'\colon\Sigma_g\to\Sigma_g\times\Sigma_g$. Since all spaces insight are aspherical, cohomology calculations can (and will) be performed at the level of fundamental groups. Care is needed in such an approach, for $\Delta$ and~$\Delta'$ cannot be considered simultaneously as based maps.

Pick once and for all a homotopy $H\colon\Sigma_g\times [0,1]\to\Sigma_g$ so that $H(-,0)$ is the identity, while $H(-,1)$ takes $\star$ into $\sigma\cdot\star$, where $\star$ is the base point indicated in Figure~\ref{embedded}. The existence of such a homotopy follows from the fact that $\Sigma_g$ is a manifold. In fact, $H$ can be choosen so that $H(\star,-)$ is the path shown in Figure~\ref{embedded}. Then $G=(\text{proj},\sigma\circ H)\colon \Sigma_g\times[0,1]\to\Sigma_g\times\Sigma_g$ is a homotopy starting at $G(-,0)=(1,\sigma)=\Delta'$, while the ending branch $G(-,1)$ is a based map taking the form
\begin{equation}\label{endingbranch}
\Delta'':=(1,\mu)=(1\times\mu)\circ\Delta\colon(\Sigma_g,\star)\to\left(\rule{0mm}{4mm}\Sigma_g\times\Sigma_g,(\star,\star)\right),
\end{equation}
where $\mu=\sigma\circ H(-,1)$. It is standard (see~\cite[Chapter~VI, Section~2]{WhHot}) that, for any system of coefficients $\mathcal{A}$ on $\Sigma_g\times\Sigma_g$, the homotopy $G$ determines an isomorphism between the induced systems of coefficients $(\Delta')^*\mathcal{A}$ and $(\Delta'')^*\mathcal{A}$ on $\Sigma_g$ in such a way that the resulting cohomology isomorphism fits in a commutative diagram
$$\xymatrix{
H^*(\Sigma_g\times\Sigma_g;\mathcal{A})\ar[r]^(.505){(\Delta')^*}\ar[dr]_(.48){(\Delta'')^*} & 
H^*(\Sigma_g;(\Delta')^*\mathcal{A}) \ar[d]^\cong \\ 
 & H^*(\Sigma_g;(\Delta'')^*\mathcal{A}).} 
$$

The point is that $\ker(\Delta')^*=\ker(\Delta'')^*$. Since $\Delta$ and $\Delta''$ are simultaneously based maps (both sending $\star$ into $(\star,\star)$), effective zero-divisors, i.e.~$\ker(\Delta)^*\cap\ker(\Delta'')^*$, can then be computed, in purely algebraic terms, from a description of the induced maps $\Delta_*$ and $\Delta''_*$ at the level of fundamental groups. Note that $\Delta_*$ is the diagonal inclusion $\uppi_g\hookrightarrow\uppi_g\times\uppi_g$, so  its cohomology effect follows directly from the results in previous sections of the paper. On the other hand,~(\ref{endingbranch}) shows that the case of $\Delta''_*$ amounts to a suitable understanding of the map induced by $\mu$ at the level of fundamental groups.

\begin{definition}
A word $w=\ell_1\ell_2\cdots\ell_{2t+1}$ $(t\geq0)$ on letters $\ell_j\in\{a_i,b_i,\overline{a_i},\overline{b_i}\colon 1\leq i\leq g\}$ is said to be almost-paired if there is a cardinality-$t$ subset $P\subset\{1,2,\ldots,2t+1\}$ and an injective map $p\colon P\to\{1,2,\ldots,2t+1\}-P$ such that $\ell_{p(j)}=\overline{\ell_j}$, for each $j\in P$. In these conditions, there is a single unpaired integer, i.e., a single integer $j_0\in\{1,2,\ldots,2t+1\}$ that does not lie in $P$ nor in the image of $p$. By abuse of notation, the existence of $P$ and $p$ as above will simply be indicated by writing $w=\mathcal{P}(\ell_{j_0})$. Further, the latter conventional equality will be applied to elements $w\in\uppi_g$, in the understanding that the equality $w=\ell_1\ell_2\cdots\ell_{2t+1}$ holds as elements of $\uppi_g$.
\end{definition}

\begin{lemma}\label{delosdia}
Generators $a_i$ and $b_i$ $(1\leq i\leq g)$ of $\uppi_g$ as in~(\ref{presentacion}) can be chosen so that the map $\mu_*\colon\uppi_g\to\uppi_g$ induced in fundamental groups by $\mu\colon(\Sigma_g,\star)\to(\Sigma_g,\star)$ satisfies
$\mu_*(a_i)=\mathcal{P}(\overline{a_{g-i+1}})$ \ and \ $\mu_*(b_i)=\mathcal{P}(b_{g-i+1}),$ \ for all $i\in\{1,2,\ldots,g\}$.
\end{lemma}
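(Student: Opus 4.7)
The plan is to reduce the almost-paired statement to a conjugacy question and then resolve that by a free-homotopy computation on the symmetrically embedded $\Sigma_g$.

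\emph{Reduction to conjugacy.} A conjugate $w\,\ell\,w^{-1}$ of a single letter $\ell$ is automatically represented by an almost-paired word with unpaired letter $\ell$ (pair the $j$-th letter of $w$ with the $j$-th-from-last letter of $w^{-1}$). So it suffices to exhibit words $w_i,w'_i$ in the alphabet~(\ref{alfabeto}) with $\mu_*(a_i)=w_i\,\overline{a_{g-i+1}}\,w_i^{-1}$ and $\mu_*(b_i)=w'_i\,b_{g-i+1}\,(w'_i)^{-1}$ as elements of $\uppi_g$, i.e.\ to show that $\mu_*(a_i)$ (respectively $\mu_*(b_i)$) is conjugate in $\uppi_g$ to $\overline{a_{g-i+1}}$ (respectively to $b_{g-i+1}$). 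Since conjugacy classes in $\uppi_g$ correspond bijectively to free homotopy classes of loops in $\Sigma_g$, the problem becomes one of identifying the free homotopy classes of the loops $\mu\circ a_i$ and $\mu\circ b_i$.

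\emph{Replacing $\mu$ by $\sigma$.} The composite $(x,t)\mapsto\sigma(H(x,t))$ is a free (unbased) homotopy from $\sigma$ at $t=0$ to $\mu$ at $t=1$. Therefore $\mu\circ a_i\simeq\sigma\circ a_i$ and $\mu\circ b_i\simeq\sigma\circ b_i$ as unbased loops. The task is reduced to choosing standard generators $a_i,b_i$ of $\uppi_g$ for which $\sigma\circ a_i$ is freely homotopic to $\overline{a_{g-i+1}}$ and $\sigma\circ b_i$ is freely homotopic to $b_{g-i+1}$.

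\emph{Geometric realisation.} Using the symmetric embedding of Figure~\ref{embedded}, I would take $a_i$ to be a longitudinal loop encircling the $i$-th handle in a plane parallel to the $yz$-plane, and $b_i$ to be a meridional loop threading the $i$-th handle, both based at $\star$ through the spine arc lying in $\Sigma_g\cap\{y=0,z\geq0\}$. The antipodal map $\sigma$ swaps handles $i$ and $g-i+1$ and preserves the ``type'' (longitude vs.~meridian) of these loops, so $\sigma\circ a_i$ is freely homotopic to $a_{g-i+1}^{\pm1}$ and $\sigma\circ b_i$ to $b_{g-i+1}^{\pm1}$. The two signs are fixed by the behaviour of $\sigma$ on the respective ambient 2-planes: the $180^{\circ}$-rotation $(y,z)\mapsto(-y,-z)$ on the transverse planes of the longitudes, combined with the reversal $x\mapsto-x$ of the viewing direction, reverses the orientation of each $a_i$, while the meridional loops, transverse to those rotation planes, are carried onto themselves with their orientations intact. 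With the orientation conventions for the generators fixed accordingly, one obtains $\sigma\circ a_i\simeq\overline{a_{g-i+1}}$ and $\sigma\circ b_i\simeq b_{g-i+1}$, and the lemma follows.

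\emph{Main obstacle.} The delicate point is the sign bookkeeping of the last step, justifying why $\sigma$ introduces a bar on $a_{g-i+1}$ but not on $b_{g-i+1}$. The freedom allowed by the lemma in picking the generators turns this into a matter of coordinating orientation conventions with the coordinate axes of the embedding rather than an intrinsic topological obstruction, but the verification must still be carried out carefully.
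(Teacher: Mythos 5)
Your overall strategy coincides with the paper's: reduce to a conjugacy/free-homotopy statement, replace $\mu$ by $\sigma$ via the free homotopy $\sigma\circ H$, and then read the answer off the symmetric embedding. The opening observation that a conjugate $w\,\ell\,\overline{w}$ is automatically of the form $\mathcal{P}(\ell)$ is a clean way to package what the paper leaves implicit, and the passage from conjugacy classes in $\uppi_g$ to free homotopy classes of loops is exactly how the paper justifies ignoring basepoint subtleties. So the reduction is sound, and even slightly stronger than required, since the paper's geometric inspection actually lands on the conjugacy class of $\overline{P_{g-i}\,a_{g-i+1}}$ rather than of $\overline{a_{g-i+1}}$ itself --- but both are of the form $\mathcal{P}(\overline{a_{g-i+1}})$ and one can check they lie in the same conjugacy class.

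The gap is precisely at the ``main obstacle'' you flag, and it is not purely a matter of conventions. Two points. First, since $\sigma$ is orientation-reversing, if $\sigma\circ a_i\simeq a_{g-i+1}^{\,\epsilon_i}$ and $\sigma\circ b_i\simeq b_{g-i+1}^{\,\delta_i}$ freely, then the intersection form forces only $\epsilon_i\delta_i=-1$; this does not by itself distinguish $(\epsilon_i,\delta_i)=(-1,1)$ from $(1,-1)$, so the heuristic ``rotation of the transverse plane plus reversal of viewing direction'' has to be carried out with a fixed, consistent orientation convention before the signs are actually determined. Second, and more seriously, when $g$ is odd the middle handle $i=(g+1)/2$ is invariant under $\sigma$, and there $\epsilon_i$ cannot be altered by replacing $a_i$ with $\overline{a_i}$ (the sign is conjugation-invariant and intrinsic to $\sigma$); so the assertion that picking generators reduces the sign to a convention fails for that handle, and a genuine geometric verification is unavoidable. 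The paper supplies exactly this verification by tracing $a_i$ through the three coordinate reflections $\rho_{yz},\rho_{xz},\rho_{xy}$ and then through the basepoint-correcting path $\sigma\circ H(\star,-)$ (Figures~\ref{sequence1}--\ref{sequence5}), reading off that the resulting loop is conjugate to $\overline{P_{g-i}\,a_{g-i+1}}$. Your write-up sketches this step without carrying it out; completing it along the lines of those figures (or by fixing an orientation of $\Sigma_g$ and computing the effect of $\sigma$ on the homology basis $\{[a_i],[b_i]\}$, followed by a check that the conjugacy classes, not just homology classes, match) is what the lemma actually requires.
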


We deal with Lemma~\ref{delosdia} after deducing Proposition~\ref{ixua} from the lemma.

\begin{proof}[Proof of Proposition~\ref{ixua}]
Routine calculation using Example~\ref{hhaayyttsmmmldksju} and Lemmas~\ref{cochain-level cup-product} and~\ref{signconventionfinal} gives that, as a cocycle, the product $abc$ agrees with $2(\upomega\otimes\upomega)^*$ ---twice a representative for a generator of $H^4(\uppi_g\times\uppi_g;\mathbb{Z}_\varnothing)\cong\mathbb{Z}$. It is also elementary to check that $a$, $b$ and $c$ are usual zero-divisors, i.e, that they lie in the kernel of the map induced by the (usual) diagonal $\Delta$. We check below that $a$, $b$ and $c$ also pull-back trivially under the map $\Delta''$ in~(\ref{endingbranch}). 

At the level of resolutions, the map induced by $1\times\mu$ can be computed as a tensor product. So we focus on the $\mu$-component, for which we will only need to describe the first two stages $\mu_0$ and $\mu_1$ of the chain map
$$\xymatrix{
0 & \mathbb{Z} \ar[l] \ar[d]^{=} & M_0 \ar[l] \ar[d]^{\mu_0} & M_1 \ar[l]_{ \ d_1} \ar[d]^{\mu_1} & M_2 \ar[l] \ar[d]^{\mu_2} & 0 \ar[l] \\
0 & \mathbb{Z} \ar[l] & M_0 \ar[l] & M_1 \ar[l] & M_2 \ar[l] & 0 \ar[l]
}$$
induced by $\mu_*\colon\uppi_g\to\uppi_g$ and, more importantly, describe the map $\mu_1^*$ at the cochain level (in~(\ref{kuppa}) below).

\smallskip
It is obvious that $\mu_0$ is determined by $\mu_0(\upchi)=\upchi$ whereas, as in Proposition~\ref{alalyvyvnene}, $\mu_1(\lambda)$ can be taken as $s_0\circ\mu_0\circ d_1(\lambda)$ for basis elements $\lambda\in M_1$. Following the convention set up in the paragraph following~(\ref{cochainproduct}), we have
$$s_0(\mu_0(d_1(\lambda)))=s_0(\mu_0((\ell-1)\cdot\upchi))=s_0(\mu_*(\ell)\cdot \upchi-\upchi)=s_0(\mu_*(\ell)\cdot \upchi).$$
The evaluation of the latter expression requires, in principle, knowledge of the normal form of $\mu_*(\ell)$. We explain next how such a requirement can be waived. Given the form of the rewriting rules (\ref{rrsimples})--(\ref{R8}), it is clear that the normal form of an element $w=\mathcal{P}(\ell)\in\uppi_g$ has again the form $N(w)=\mathcal{P}(\ell)$ ---the latter equality uses the element $\ell$ in the former equality. Furthermore, from its definition (in~(\ref{deritotalFox}) and~(\ref{dtf})), the $s_0$-image of an element $\mathcal{P}(\ell)\cdot\upchi$ (with $\mathcal{P}(\ell)$ written in normal form) is a sum of terms
\begin{equation}\label{parte1}
(x-y)\cdot\upeta,
\end{equation}
with $x,y\in\uppi_g$ and $\upeta$ a basis element, plus an additional summand $z\cdot s_0(\ell\cdot\upchi)$, with $z\in\uppi_g$. The point is that terms~(\ref{parte1}) can all be neglected when applying a functor Hom$_{\pi_g}(-,\mathbb{Z}_\varnothing)$. Together with Lemma~\ref{delosdia}, this yields that, at the cochain level,
\begin{equation}\label{kuppa}
\mu_1^*(\upalpha^*_i)=-\upalpha^*_{g-i+1}\quad\mbox{and\;}\quad \mu_1^*(\upbeta^*_i)=\upbeta^*_{g-i+1}
\end{equation}
for $1\leq i\leq g$. The conclusion follows now from direct calculation:
\begin{align*}
(\Delta'')^*(a)&=\Delta^*\circ (1\otimes\mu^*)\left((\upalpha_1\otimes\upchi)^*-(\upchi\otimes\upalpha_1)^*+(\upchi\otimes\upalpha_g)^*-(\upalpha_g\otimes\upchi)^*\rule{0mm}{4mm}\right)\\
&=\Delta^*\circ (1\otimes\mu^*)\left(\upalpha^*_1\otimes\upchi^*-\upchi^*\otimes\upalpha_1^*+\upchi^*\otimes\upalpha_g^*-\upalpha^*_g\otimes\upchi^*\rule{0mm}{4mm}\right)\\
&=\Delta^*\left(\upalpha^*_1\otimes\upchi^*+\upchi^*\otimes\upalpha_g^*-\upchi^*\otimes\upalpha_1^*-\upalpha^*_g\otimes\upchi^*\rule{0mm}{4mm}\right)=\upalpha^*_1+\upalpha^*_g-\upalpha^*_1-\upalpha^*_g=0.
\end{align*}
Note that the standard sign convention introduces no sign in the second and third equalities above, as the relevant morphisms have degree zero. The analysis of $(\Delta'')^*(b)$ is similar, whereas
$
(\Delta'')^*(c)=\Delta^*\circ (1\otimes\mu^*)\left(-\upbeta^*_1\otimes\upalpha^*_1+\upbeta^*_g\otimes\upalpha^*_g\rule{0mm}{4mm}\right)=\Delta^*\left(\upbeta^*_1\otimes\upalpha^*_g-\upbeta^*_g\otimes\upalpha_1^*\rule{0mm}{4mm}\right)
$, with both $\Delta^*(\upbeta^*_1\otimes\upalpha^*_g)$ and $\Delta^*(\upbeta^*_g\otimes\upalpha_1^*)$ being zero.
\end{proof}

The proof of Lemma~\ref{delosdia} is a straightforward geometric exercise based on the fact that, since $\sigma\circ H$ is a homotopy from $\sigma$ to $\mu$, \cite[Lemma~1.19]{MR1867354} yields a commutative diagram
$$\xymatrix{
\uppi_g\ar[r]^(.505){\mu_*}\ar[dr]_(.48){\sigma_*} & 
\uppi_g \\ 
 & \uppi_1(\Sigma_g,\sigma\cdot\star) \ar[u]_{\cong}.}
$$
The point is that the vertical isomorphism (conjugation by the path $\sigma\circ H(\star,-)$) becomes geometrically explicit once the path $H(\star,-)$ is chosen (as we did in Figure~\ref{embedded}). Likewise, the morphism $\sigma_*$ is geometrically explicit in terms of the representing loops described in Figure~\ref{la2} for the generators in~(\ref{presentacion}).
\begin{figure}[h!]    \centering
\begin{tikzpicture}
\draw [-, rounded corners, thick] (0,0) -- (4,0) -- (4,3) -- (-4,3) -- (-4,0) --(0,0);
\draw (2.5,1.5) circle (0.5cm);
\draw (0,1.5) circle (0.5cm);
\draw (-2.5,1.5) circle (0.5cm);
\filldraw[black] (-4,1.5) circle (2pt) node[left] {$\star$};
\node at (-2.5,1.5){$1$};
\node at (0,1.5){$i$};
\node at (2.5,1.5){$g$};
\node at (-1.25,1.5){$\cdots$};
\node at (1.25,1.5){$\cdots$};
\draw[line width=0.2mm, rotate=0, dashed] (0,2) arc (-90:90:0.2 and 0.5); 
\draw[line width=0.4mm, rotate=0] (0,3) arc (90:170:0.2 and 0.5);
\draw[line width=0.4mm, rotate=0] (-0.2,2.415) arc (190:271:0.2 and 0.5);
\draw[->, >=latex] (-0.16, 2.8) to (-0.12, 2.9);
\path[overlay,-, font=\scriptsize,>=latex, line width=0.4mm]
(-1.3, 0.8) edge[out=0,in=180] node[] {} (-0.1965, 2.6);
\path[overlay,-, font=\scriptsize,>=latex, line width=0.4mm]
(-1.2, 0.65) edge[out=0,in=180] node[] {} (-0.1966, 2.41);
\path[overlay,-, font=\scriptsize,>=latex, line width=0.4mm]
(-3.95,1.51) edge[out=320,in=180] node[] {} (-1.3, 0.8);
\path[overlay,-, font=\scriptsize,>=latex, line width=0.4mm]
(-3.95,1.48) edge[out=315,in=180] node[] {} (-1.2, 0.65);
\node at (0.5,2.5){$a_i$};
\draw[line width=0.4mm, rotate=0] (-0.39,1) arc (-136:98:0.6 and 0.65); 
\draw[line width=0.4mm, rotate=0] (-0.2,2.07) arc (100:220:0.5 and 0.6); 
\draw[->, >=latex] (0.64, 1.4) to (0.64, 1.5);
\node at (0.7,.8){$b_i$};
\path[overlay,-, font=\scriptsize,>=latex, line width=0.4mm]
(-3.93,1.47) edge[out=280,in=-97] node[] {} (-0.5, 1.1);
\path[overlay,-, font=\scriptsize,>=latex, line width=0.4mm]
(-3.95,1.47) edge[out=270,in=-100] node[] {} (-0.39, 1);
\end{tikzpicture}
    \caption{Loops representing generators in~(\ref{presentacion})}
    \label{la2}
\end{figure}
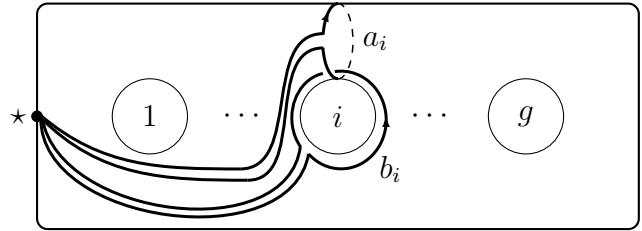
In fact, with $P_j:=[a_1,b_1]\cdots[a_j,b_j]\in\uppi_g$ ($0\leq j\leq g$), representing loops for the elements $P_ia_{i+1}, \,P_ia_{i+1}b_{i+1}, \,P_ia_{i+1}b_{i+1}\overline{a_{i+1}}, \,P_{i+1} \in\uppi_g$ ($0\leq i<g$) are given in Figures~\ref{elementsa}--\ref{elementsd}.
\begin{figure}[h!]
    \centering
    \begin{tikzpicture}[thick,scale=0.8, every node/.style={scale=0.8}]
\draw [-, rounded corners, thick] (0,0) -- (6,0) -- (6,3) -- (-6,3) -- (-6,0) --(0,0);
\draw (5,1.5) circle (0.5cm);
\draw (-1,1.5) circle (0.5cm);
\draw (1,1.5) circle (0.5cm);
\draw (-5,1.5) circle (0.5cm);
\node at (-5,1.5){\footnotesize$1$};
\node at (-1,1.5){\footnotesize$i$};
\node at (1,1.5){\scriptsize$i{+}1$};
\node at (5,1.5){\footnotesize$g$};
\node at (-3,1.5){$\cdots$};
\node at (3,1.5){$\cdots$};
\filldraw[black] (-6,1.5) circle (2pt) node[left] {$\star$};
\draw[line width=0.2mm, rotate=0, dashed] (1,0) arc (-90:90:0.2 and 0.5); 
\draw[line width=0.4mm, rotate=0] (1,1) arc (90:170:0.2 and 0.5);
\draw[line width=0.4mm, rotate=0] (0.8,0.415) arc (190:271:0.2 and 0.5);
\path[overlay,-, font=\scriptsize,>=latex, line width=0.4mm]
(-5.95,1.44) edge[out=320,in=180] node[] {} (0.8, 0.415);
\path[overlay,-, font=\scriptsize,>=latex, line width=0.4mm]
(-5.95,1.52) edge[out=320,in=180] node[] {} (0.8, 0.6);
\draw[->, >=latex] (0.83, .23) to (.87, .1);
\end{tikzpicture}
    \caption{$P_ia_{i+1}$}
    \label{elementsa}
\end{figure}
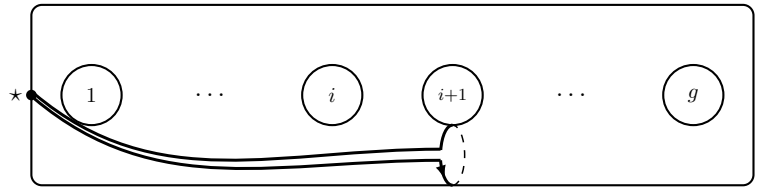
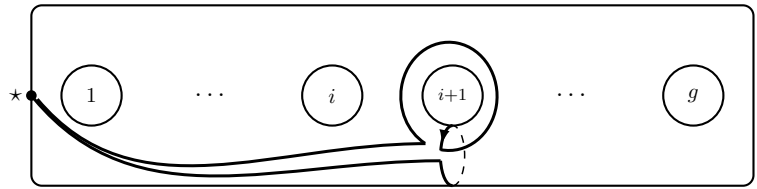
\begin{figure}[h!]
    \centering
\begin{tikzpicture}[thick,scale=0.8, every node/.style={scale=0.8}]
\draw [-, rounded corners, thick] (0,0) -- (6,0) -- (6,3) -- (-6,3) -- (-6,0) --(0,0);
\draw (5,1.5) circle (0.5cm);
\draw (-1,1.5) circle (0.5cm);
\draw (1,1.5) circle (0.5cm);
\draw (-5,1.5) circle (0.5cm);
\node at (-5,1.5){\footnotesize$1$};
\node at (-1,1.5){\footnotesize$i$};
\node at (1,1.5){\scriptsize$i{+}1$};
\node at (5,1.5){\footnotesize$g$};
\node at (-3,1.5){$\cdots$};
\node at (3,1.5){$\cdots$};
\filldraw[black] (-6,1.5) circle (2pt) node[left] {$\star$};
\draw[line width=0.2mm, rotate=0, dashed] (1,0) arc (-90:90:0.2 and 0.5); 
\draw[line width=0.4mm, rotate=0] (1,1) arc (90:170:0.2 and 0.5);
\draw[line width=0.4mm, rotate=0] (0.8,0.415) arc (190:271:0.2 and 0.5);
\path[overlay,-, font=\scriptsize,>=latex, line width=0.4mm]
(-5.95,1.44) edge[out=310,in=180] node[] {} (0.8, 0.415);
\draw[line width=0.4mm, rotate=0] (0.8,0.6) arc (-100:240:0.8 and 0.9);
\path[overlay,-, font=\scriptsize,>=latex, line width=0.4mm]
(-5.90,1.44) edge[out=310,in=180] node[] {} (0.55, 0.7);
\draw[->, >=latex] (0.84, 0.8) to (0.82, 0.7);
\end{tikzpicture}
    \caption{$P_ia_{i+1}b_{i+1}$}
    \label{elementsb}
\end{figure}
\begin{figure}[h!]
    \centering
\begin{tikzpicture}[thick,scale=0.8, every node/.style={scale=0.8}]
\draw [-, rounded corners, thick] (0,0) -- (6,0) -- (6,3) -- (-6,3) -- (-6,0) --(0,0);
\draw (5,1.5) circle (0.5cm);
\draw (-1,1.5) circle (0.5cm);
\draw (1,1.5) circle (0.5cm);
\draw (-5,1.5) circle (0.5cm);
\node at (-5,1.5){\footnotesize$1$};
\node at (-1,1.5){\footnotesize$i$};
\node at (1,1.5){\scriptsize$i{+}1$};
\node at (5,1.5){\footnotesize$g$};
\node at (-3,1.5){$\cdots$};
\node at (3,1.5){$\cdots$};
\filldraw[black] (-6,1.5) circle (2pt) node[left] {$\star$};
\draw[line width=0.2mm, rotate=0, dashed] (1.1,0) arc (-90:90:0.5 and 1.5); 
\draw[line width=0.4mm, rotate=0] (0.6,0.415) arc (190:271:0.4 and 0.5);
\draw[line width=0.4mm, rotate=0] (1.1,3) arc (90:210:0.7 and 1.5);
(-5.95,1.44) edge[out=310,in=180] node[] {} (0.8, 0.415);
\path[overlay,-, font=\scriptsize,>=latex, line width=0.4mm]
(-5.90,1.44) edge[out=310,in=180] node[] {} (0.5, 0.75);
\path[overlay,-, font=\scriptsize,>=latex, line width=0.4mm]
(-5.95,1.44) edge[out=310,in=180] node[] {} (0.6, 0.415);
\draw[->, >=latex] (0.415, 1.8) to (0.405, 1.7);
\end{tikzpicture}
    \caption{$P_ia_{i+1}b_{i+1}\overline{a_{i+1}}$}
    \label{elementsc}
\end{figure}
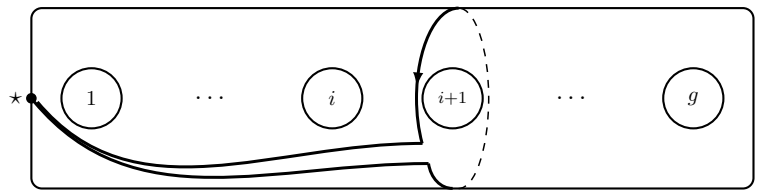
\begin{figure}[h!]
    \centering
\begin{tikzpicture}[thick,scale=0.8, every node/.style={scale=0.8}]
\draw [-, rounded corners, thick] (0,0) -- (6,0) -- (6,3) -- (-6,3) -- (-6,0) --(0,0);
\draw (5,1.5) circle (0.5cm);
\draw (-1,1.5) circle (0.5cm);
\draw (1,1.5) circle (0.5cm);
\draw (-5,1.5) circle (0.5cm);
\node at (-5,1.5){\footnotesize$1$};
\node at (-1,1.5){\footnotesize$i$};
\node at (1,1.5){\scriptsize$i{+}1$};
\node at (5,1.5){\footnotesize$g$};
\filldraw[black] (-6,1.5) circle (2pt) node[left] {$\star$};
\draw[line width=0.2mm, rotate=0, dashed] (2,0) arc (-90:90:0.25 and 1.5); 
\draw[line width=0.4mm, rotate=0] (1.8,0.415) arc (190:271:0.2 and 0.5);
\draw[line width=0.4mm, rotate=0] (2.1,3) arc (90:210:0.4 and 1.5);
(-5.95,1.44) edge[out=310,in=180] node[] {} (0.8, 0.415);
\path[overlay,-, font=\scriptsize,>=latex, line width=0.4mm]
(-5.95,1.44) edge[out=312,in=180] node[] {} (1.77, 0.75);
\path[overlay,-, font=\scriptsize,>=latex, line width=0.4mm]
(-5.95,1.44) edge[out=310,in=175] node[] {} (1.8, 0.415);
\draw[->, >=latex] (1.703, 1.7) to (1.703, 1.6);
\node at (-3,1.5){$\cdots$};
\node at (3.35,1.5){$\cdots$};
\end{tikzpicture}
    \caption{$P_{i+1}$}
    \label{elementsd}
\end{figure}

Note that $P_g$ is in particular the neutral element in $\uppi_g$, so the loops described in Figure~\ref{la2} do represent generators as required in~(\ref{presentacion}), and the algebraic results in previous sections can be applied in terms of these generators. Lemma~\ref{delosdia} is now verified by geometric inspection. For instance, Figures~\ref{sequence1}--\ref{sequence5} display the evolution of the elements $a_i,\, \rho_{yz}(a_i),\, \rho_{xz}\rho_{yz}(a_i),\, \sigma_*(a_i)=\rho_{xy}\rho_{xz}\rho_{yz}(a_i),$ and $\mu_*(a_i)$, where $\rho$-maps stand for reflections in the indicated hyperplanes. The formula $\mu_*(a_i)=\mathcal{P}(\overline{a_{g-i+1}})$ follows by observing that the last stage in Figure~\ref{sequence5} is conjugate to the inverse of $P_{g-i}a_{g-i+1}$.
The formula $\mu_*(b_i)=\mathcal{P}(b_{g-i+1})$ is verified in a similar fashion.

\begin{figure}[h!]
    \centering
\begin{tikzpicture}[thick,scale=0.8, every node/.style={scale=0.8}]
\draw [-, rounded corners, thick] (0,0) -- (6,0) -- (6,3) -- (-6,3) -- (-6,0) --(0,0);
\draw (5,1.5) circle (0.5cm);
\draw (-1,1.5) circle (0.5cm);
\draw (1,1.5) circle (0.5cm);
\draw (-5,1.5) circle (0.5cm);
\node at (-5,1.5){\footnotesize$1$};
\node at (-1,1.5){\footnotesize$i$};
\node at (1,1.5){\scriptsize{$g{-}i{+}1$}};
\node at (5,1.5){\footnotesize$g$};
\node at (-3,1.5){$\cdots$};
\node at (3,1.5){$\cdots$};
\node at (7,1.5){\rule{0mm}{4mm}};
\filldraw[black] (-6,1.5) circle (2pt) node[left] {$\star$};
\draw[ line width=0.2mm, rotate=0, dashed] (-1,2) arc (-90:90:0.2 and 0.5); 
\draw[ line width=0.4mm, rotate=0] (-1,3) arc (90:170:0.2 and 0.5);
\draw[ line width=0.4mm, rotate=0] (-1.2,2.41) arc (190:270:0.2 and 0.5);
\path[overlay,-, font=\scriptsize,>=latex, line width=0.4mm]
(-5.95,1.44) edge[out=310, in=180] node[] {} (-2.1, 0.7);
\path[overlay,-, font=\scriptsize,>=latex, line width=0.4mm]
(-2.1,0.7) edge[out=0, in=180] node[] {} (-1.2, 2.6);
\path[overlay,-, font=\scriptsize,>=latex, line width=0.4mm]
(-5.95,1.40) edge[out=310, in=180] node[] {} (-2.0, 0.6);
\path[overlay,-, font=\scriptsize,>=latex, line width=0.4mm]
(-2.0,0.6) edge[out=0, in=180] node[] {} (-1.2, 2.4);
\draw[->, >=latex] (-1.173, 2.75) to (-1.163, 2.85);
\end{tikzpicture}
    \caption{$a_i$}
    \label{sequence1}
\end{figure}
\begin{figure}[h!]
    \centering
\begin{tikzpicture}[thick,scale=0.8, every node/.style={scale=0.8}]
\draw [-, rounded corners, thick] (0,0) -- (6,0) -- (6,3) -- (-6,3) -- (-6,0) --(0,0);
\draw (5,1.5) circle (0.5cm);
\draw (-1,1.5) circle (0.5cm);
\draw (1,1.5) circle (0.5cm);
\draw (-5,1.5) circle (0.5cm);
\node at (-5,1.5){\footnotesize$1$};
\node at (-1,1.5){\footnotesize$i$};
\node at (1,1.5){\scriptsize{$g{-}i{+}1$}};
\node at (5,1.5){\footnotesize$g$};
\node at (-3,1.5){$\cdots$};
\node at (3,1.5){$\cdots$};
\node at (-6,1.5){\rule{6mm}{0mm}};
\draw[ line width=0.2mm, rotate=0, dashed] (1,3) arc (90:270:0.2 and 0.5); 
\draw[ line width=0.4mm, rotate=0] (1,3) arc (90:0:0.2 and 0.5);
\draw[ line width=0.4mm, rotate=0] (1,2) arc (-90:-10:0.2 and 0.5);
\path[overlay,-, font=\scriptsize,>=latex, line width=0.4mm]
(1.2,2.4) edge[out=0,in=180] node[] {} (2.1, 0.7); 
\path[overlay,-, font=\scriptsize,>=latex, line width=0.4mm]
(2.1,0.7) edge[out=0,in=230] node[] {} (6, 1.4); 
\path[overlay,-, font=\scriptsize,>=latex, line width=0.4mm]
(1.2,2.5) edge[out=0,in=180] node[] {} (2.2, 0.75); 
\path[overlay,-, font=\scriptsize,>=latex, line width=0.4mm]
(2.2,0.75) edge[out=0,in=230] node[] {} (6, 1.45); 
\draw[->, >=latex] (1.18, 2.7) to (1.17, 2.8);
\filldraw[black] (6,1.5) circle (2pt) node[right] {$\sigma\cdot\star$};
\end{tikzpicture}
    \caption{$\rho_{yz}(a_i)$}
    \label{sequence2}
\end{figure}
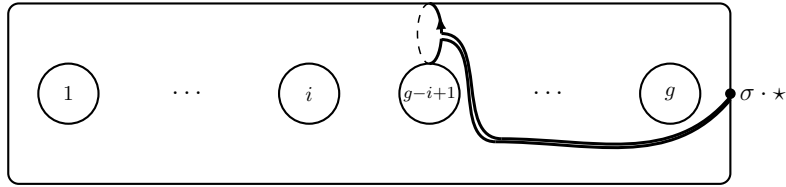
\begin{figure}[h!]
    \centering
\begin{tikzpicture}[thick,scale=0.8, every node/.style={scale=0.8}]
\draw [-, rounded corners, thick] (0,0) -- (6,0) -- (6,3) -- (-6,3) -- (-6,0) --(0,0);
\draw (5,1.5) circle (0.5cm);
\draw (-1,1.5) circle (0.5cm);
\draw (1,1.5) circle (0.5cm);
\draw (-5,1.5) circle (0.5cm);
\node at (-5,1.5){\footnotesize$1$};
\node at (-1,1.5){\footnotesize$i$};
\node at (1,1.5){\scriptsize{$g{-}i{+}1$}};
\node at (5,1.5){\footnotesize$g$};
\node at (-3,1.5){$\cdots$};
\node at (3,1.5){$\cdots$};
\node at (-6,1.5){\rule{6mm}{0mm}};
\draw[ line width=0.2mm, rotate=0, dashed] (1,1) arc (90:270:0.2 and 0.5); 
\draw[ line width=0.4mm, rotate=0] (1,1) arc (90:0:0.2 and 0.5);
\draw[ line width=0.4mm, rotate=0] (1,0) arc (-90:-10:0.2 and 0.5);
\filldraw[black] (6,1.5) circle (2pt) node[right] {$\sigma\cdot \star$};
\path[overlay,-, font=\scriptsize,>=latex, line width=0.4mm]
(2.1, 2.5) edge[out=0, in=109] node[] {} (6, 1.57);
\path[overlay,-, font=\scriptsize,>=latex, line width=0.4mm]
(1.2, 0.5) edge[out=0, in=180] node[] {} (2.1, 2.5);
\path[overlay,-, font=\scriptsize,>=latex, line width=0.4mm]
(2.2,2.44) edge[out=0, in=109] node[] {} (5.95, 1.55);
\path[overlay,-, font=\scriptsize,>=latex, line width=0.4mm]
(1.2, 0.4) edge[out=0, in=180] node[] {} (2.2, 2.44);
\draw[->, >=latex] (0.82, 0.7) to (0.84, 0.8);
\end{tikzpicture}
    \caption{$\rho_{xz}\rho_{yz}(a_i)$}
    \label{sequence3}
\end{figure}
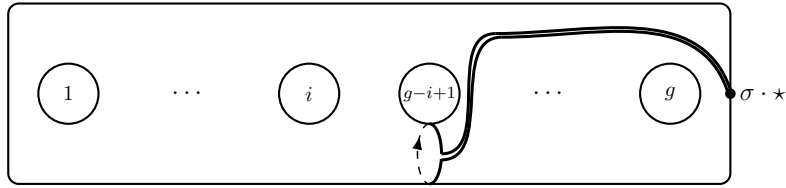
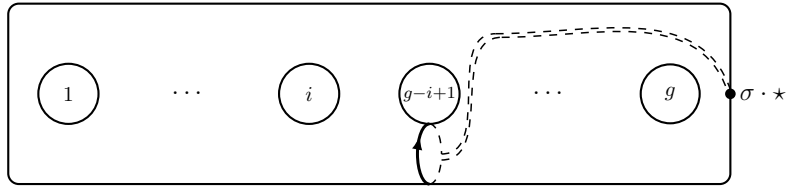
\begin{figure}[h!]
    \centering
\begin{tikzpicture}[thick,scale=0.8, every node/.style={scale=0.8}]
\draw [-, rounded corners, thick] (0,0) -- (6,0) -- (6,3) -- (-6,3) -- (-6,0) --(0,0);
\draw (5,1.5) circle (0.49cm);
\draw (-1,1.5) circle (0.5cm);
\draw (1,1.5) circle (0.5cm);
\draw (-5,1.5) circle (0.5cm);
\node at (-5,1.5){\footnotesize$1$};
\node at (-1,1.5){\footnotesize$i$};
\node at (1,1.5){\scriptsize{$g{-}i{+}1$}};
\node at (5,1.5){\footnotesize$g$};
\node at (-3,1.5){$\cdots$};
\node at (3,1.5){$\cdots$};
\node at (-6,1.5){\rule{6mm}{0mm}};
\draw[ line width=0.4mm, rotate=0] (1,1) arc (90:270:0.2 and 0.5); 
\draw[ line width=0.2mm, rotate=0, dashed] (1,1) arc (90:0:0.2 and 0.5);
\draw[ line width=0.2mm, rotate=0, dashed] (1,0) arc (-90:-10:0.2 and 0.5);
\path[overlay,-, font=\scriptsize,>=latex, line width=0.2mm]
(2.1, 2.5) edge[out=0, in=109, dashed] node[] {} (6, 1.57);
\path[overlay,-, font=\scriptsize,>=latex, line width=0.2mm]
(1.2, 0.5) edge[out=0, in=180, dashed] node[] {} (2.1, 2.5);
\path[overlay,-, font=\scriptsize,>=latex, line width=0.2mm]
(2.2,2.44) edge[out=0, in=109, dashed] node[] {} (5.95, 1.55);
\path[overlay,-, font=\scriptsize,>=latex, line width=0.2mm]
(1.2, 0.4) edge[out=0, in=180, dashed] node[] {} (2.2, 2.44);
\filldraw[black] (6,1.5) circle (2pt) node[right] {$\sigma \cdot \star$};
\draw[->, >=latex] (0.82, 0.7) to (0.84, 0.8);
\end{tikzpicture}
    \caption{$\rho_{xy}\rho_{xz}\rho_{yz}(a_i)$}
    \label{sequence4}
\end{figure}
\begin{figure}[h!]
    \centering
\begin{tikzpicture}[thick,scale=0.8, every node/.style={scale=0.8}]
\draw [-, rounded corners, thick] (0,0) -- (6,0) -- (6,3) -- (-6,3) -- (-6,0) --(0,0);
\draw (5,1.5) circle (0.5cm);
\draw (-1,1.5) circle (0.5cm);
\draw (1,1.5) circle (0.5cm);
\draw (-5,1.5) circle (0.5cm);
\node at (-5,1.5){\footnotesize$1$};
\node at (-1,1.5){\footnotesize$i$};
\node at (1,1.5){\scriptsize{$g{-}i{+}1$}};
\node at (5,1.5){\footnotesize$g$};
\node at (-3,1.5){$\cdots$};
\node at (3,1.5){$\cdots$};
\node at (7,1.5){\rule{0mm}{4mm}};
\filldraw[black] (-6,1.5) circle (2pt) node[left] {$\star$};
\draw[ line width=0.4mm, rotate=0] (1,1) arc (90:270:0.2 and 0.5); 
\draw[ line width=0.2mm, rotate=0, dashed] (1,1) arc (90:0:0.2 and 0.5);
\draw[ line width=0.2mm, rotate=0, dashed] (1,0) arc (-90:-10:0.2 and 0.5);
\path[overlay,-, font=\scriptsize,>=latex, line width=0.4mm]
(-5.95,1.5) edge[out=50,in=180] node[] {} (0, 2.7);
\path[overlay,-, font=\scriptsize,>=latex, line width=0.4mm]
(0,2.7) edge[out=0,in=170] node[] {} (6, 2.4);
\path[overlay,-, font=\scriptsize,>=latex, line width=0.2mm]
(2.2,2.44) edge[out=0, in=190, dashed] node[] {} (6, 2.4);
\path[overlay,-, font=\scriptsize,>=latex, line width=0.2mm]
(1.2, 0.4) edge[out=0, in=180, dashed] node[] {} (2.2, 2.44);
\path[overlay,-, font=\scriptsize,>=latex, line width=0.2mm]
(1.2, 0.5) edge[out=0, in=180, dashed] node[] {} (2.1, 2.5);
\path[overlay,-, font=\scriptsize,>=latex, line width=0.2mm]
(2.1, 2.5) edge[out=0, in=190, dashed] node[] {} (6, 2.5);
\path[overlay,-, font=\scriptsize,>=latex, line width=0.4mm]
(-5.95,1.56) edge[out=50,in=180] node[] {} (0, 2.8);
\path[overlay,-, font=\scriptsize,>=latex, line width=0.4mm]
(0,2.8) edge[out=0,in=165] node[] {} (6, 2.5);
\draw[->, >=latex] (0.82, 0.7) to (0.84, 0.8);
\draw[->, >=latex] (4.3, 2.82) to (4.25, 2.83);
\draw[->, >=latex] (4.25, 2.63) to (4.3, 2.62);
\end{tikzpicture}
    \caption{$\mu_*(a_i)$}
    \label{sequence5}
\end{figure}

\begin{remark}\label{rf}
Regarding the possibility of proving the equality $\TC^{\mathbb{Z}_2}(\Sigma_g)=4$,
it is natural to ask whether the 2-dimensional effective zero-divisor $c$ in Proposition~\ref{ixua} factors as a product of two effective zero-divisors. It can be shown that such a factorization fails even with twisted $\mathbb{Z}$-coefficients.  Yet, it is conceivable that there could exist some other four effective zero-divisors of dimension 1 with non-vanishing product. In this direction we remark that, in genus 2, any two $\widetilde{\mathbb{Z}}$-coefficients effective zero-divisors of dimension 1 have trivial product. Yet, we report that, surprisingly, the classes $\upnu$, $\upnu'$ and $\upnu''$ coming from Proposition~\ref{cdelpdim1} with $g_1=g_2=3$ and, respectively, $S_1=S_2=\{a_2\}$, $S'_1=S'_2=\{b_2\}$ and $S''_1=S''_2=\{a_1,a_3\}$ are effective zero-divisors with non-vanishing product. Such an improvement on product-length of $\mathbb{Z}$-twisted effective zero-divisors would seem to suggest that, in (moderately) larger genera, there could actually exist four $\widetilde{\mathbb{Z}}$-coefficients effective zero-divisors of dimension 1 with non-vanishing product ---which would yield the equality $\TC^{\mathbb{Z}_2}(\Sigma_g)=4$ in the corresponding genera. The exploration of such a possibility demands replacing Lemma~\ref{delosdia} by an explicit description of the map $\mu_*\colon \uppi_g\to\uppi_g$.
\end{remark}


\bigskip

{\sc Departamento de Matem\'aticas

Centro de Investigaci\'on y de Estudios Avanzados del I.P.N.

Av.~Instituto Polit\'ecnico Nacional n\'umero~2508, San Pedro Zacatenco

M\'exico City 07000, M\'exico.}

\tt jesus@math.cinvestav.mx

cadavid@math.cinvestav.mx

\end{document}